\newtheorem{theorem}{Theorem}[section]
\newtheorem{proposition}[theorem]{Proposition}
\newtheorem{lemma}[theorem]{Lemma}
\newtheorem{corollary}[theorem]{Corollary}
\theoremstyle{definition}
\newtheorem{remark}[theorem]{Remark}
\newenvironment{example}
{\pushQED{\qed}\examplex}
{\popQED\endexamplex}
\newcommand{\PP}{\mathbb{P}}
\newcommand{\CC}{\mathbb{C}}
\newcommand{\ZZ}{\mathbb{Z}}
\title{\bf The Chow-Lam Form}
\author{Elizabeth Pratt\footnote{Supported by an NSF Graduate Research Fellowship } \ and Bernd Sturmfels}
\date{}
\begin{document}
\maketitle

\begin{abstract}
  \noindent
 The classical Chow form encodes any projective variety by one equation.
 We here introduce the Chow-Lam form for subvarieties of a Grassmannian.
By evaluating the Chow-Lam form at twistor coordinates, we 
obtain universal projection formulas. These were pioneered
by Thomas Lam for positroid varieties in the study of amplituhedra,
and we develop his approach further. Universal formulas for branch loci are obtained from
Hurwitz-Lam forms. Our focus is on computations and applications in geometry.
\end{abstract}

\section{Introduction}

The Grassmannian ${\rm Gr}(k,n)$ is
a projective variety of dimension $k(n-k)$
that is embedded in $\PP^{\binom{n}{k}-1}$.
Its points  are linear subspaces of dimension 
$k$ in $\CC^n$. Such a subspace is usually given as the kernel or 
 row space of a matrix, whose maximal minors furnish
primal or dual  Pl\"ucker coordinates. Either of these specifies the
Pl\"ucker embedding of ${\rm Gr}(k,n)$ into $\PP^{\binom{n}{k}-1}$.
In the special cases $k=1$ and $k=n-1$, the Grassmannian 
is the projective space $\PP^{n-1}$.

Suppose we are given a subvariety $\mathcal{V}$ of ${\rm Gr}(k,n)$,
where ${\rm dim}(\mathcal{V}) = k(r-k)-1$ for some $r \in \{k+1,\ldots,n\}$.
Our aim is to characterize $\mathcal{V}$ by a single equation.
This will generalize the classical Chow form \cite{Cay, CvW, DS, GKZ}
from $\PP^{n-1}$ to $k \in \{2,3,\ldots,n-2\}$.
Let $\mathcal{CL}_\mathcal{V}$ denote the set of linear spaces
$P \in {\rm Gr}(k+n-r,n)$ which 
contain a subspace $Q$ belonging to $\mathcal{V}$.
The codimension of $\mathcal{CL}_\mathcal{V}$ 
in ${\rm Gr}(k+n-r,n)$ is expected to be one, by equation (\ref{eq:dof}).
If it is one, then
$\mathcal{CL}_\mathcal{V}$    is defined by a homogeneous
polynomial  in Pl\"ucker coordinates, which is unique modulo Pl\"ucker relations.
 This polynomial is denoted
${\rm CL}_\mathcal{V}$ and called the
 {\em Chow-Lam form} of $\mathcal{V}$. 
 For $k \in \{1,n-1\}$, this specializes to
the Chow form of a projective variety $\mathcal{V} \subset \PP^{n-1}$.

\begin{example}[Ruled surfaces in 3-space] \label{ex:ruledsurface}
Fix $k=2,n=4,r=3$, so that $\mathcal{V}$ is
a curve is the Grassmannian ${\rm Gr}(2,4)$.
This curve parametrizes lines $Q$ in $\PP^3$ whose union is
a ruled surface $\mathcal{S}$. Then $\mathcal{CL}_\mathcal{V}$
is a surface in ${\rm Gr}(3,4) = (\PP^3)^\vee$. It
parametrizes planes $P$ in $\PP^3$
which contain a line $Q$ from the curve $\mathcal{V}$.
Hence, $\mathcal{CL}_\mathcal{V}$ is the
surface projectively dual to the ruled surface~$\mathcal{S}$.
For a concrete example, consider the curve of lines that intersect three given lines.
Each line is given by its Chow form, whose coefficients are the primal Pl\"ucker coordinates:
$$ \mathcal{V} \,=\, V \bigl(\,
  q_{12}+q_{14}-2 q_{23}+2 q_{34}\,,\,\,
  q_{13}+2 q_{14}+q_{23}+2 q_{24}\,,\,\,
  5q_{12} + 2q_{14} - 25q_{23} + 10q_{34}
  \, \bigr) \,\,\subset \,\,{\rm Gr}(2,4) .
   $$
   The varying line $Q \subset \PP^3$ is represented by its dual Pl\"ucker coordinates
on ${\rm Gr}(2,4) = V(q_{12} q_{34} - q_{13} q_{24} + q_{14} q_{23})$.
To recover the ruled surface, we augment the three linear forms above by
the four equations $\,q_{ij} x_k - q_{ik} x_j + q_{ij} x_k = 0$, which cut out the line  $Q $.
Next we saturate and then we eliminate the $q$-variables.
This yields the corresponding ruled surface
$$ \mathcal{S} \,=\, V \bigl(\,
x_1^2-9 x_1 x_2-10 x_2^2+7 x_1 x_3+10 x_3^2-14 x_2 x_4+18 x_3 x_4-4 x_4^2 \,\bigr) \,\,\,
\subset \,\,\, \PP^3. $$
Note that $\mathcal{V}$ is one of the two rulings of the quadric $\mathcal{S}$.
A plane $P$ with primal coordinates $p_1,p_2,p_3,p_4$ contains
the line $Q$ if and only if $\,\sum_{i=1}^4 q_{ij} p_i = 0$ for all $j$.
Elimination now yields
\begin{equation}
\label{eq:chowlamex} {\rm CL}_\mathcal{V} \, = \,
20 p_1^2-18 p_1 p_2-2 p_2^2-14 p_1 p_3+2 p_3^2+7 p_2 p_4+9 p_3 p_4-5 p_4^2 .
\end{equation}
This defines
 the Chow-Lam locus $ \mathcal{CL}_\mathcal{V}$  in
${\rm Gr}(3,4) = (\PP^3)^\vee$. This  quadric is dual
to $\mathcal{S} \subset \PP^3$.

The Chow-Lam form serves as a universal equation for  projections of $\mathcal{V}$.
Fix a  $3 \times 4$ matrix $Z = (z_{ij}) $. This defines a linear projection
${\rm Gr}(2,4) \dashrightarrow {\rm Gr}(2,3) =  (\PP^2)^\vee$.
We are interested in the image of our curve $\mathcal{V}$
in that dual projective plane, with  coordinates $y = (y_1,y_2,y_3)$.
The entries of the vector-matrix product $y Z$ are referred to as {\em twistor coordinates}.
Explicitly,
\begin{equation}
\label{eq:twistorex}
p_i \,=\,  z_{1i} y_1 + z_{2i} y_2 + z_{3i} y_3,
\qquad {\rm for} \quad i=1,2,3,4.
\end{equation}
Substituting (\ref{eq:twistorex}) into
 (\ref{eq:chowlamex}), we obtain a quadric in $y$ whose
coefficients depend on $Z$. This is the equation of the image curve.
The same recipe works for any ruled surface $\mathcal{S} \subset \PP^3$.
\end{example}

The name ``Chow-Lam form'' recognizes work of Thomas Lam \cite{LamCurrent, LamStanley}
at the interface of combinatorics and particle physics.
Lam focuses on the case when $\mathcal{V}$ is a positroid variety, defined by
special collections of Pl\"ucker coordinates on ${\rm Gr}(k,n)$.
Lam refers to $\mathcal{CL}_\mathcal{V}$ as
the {\em universal amplituhedron variety} \cite[Section 18.1]{LamCurrent},
and he discusses universal projections via
twistor coordinates. We now review Lam's
{\em degree-three example} from \cite[Section~19.4]{LamCurrent}.

\begin{example}[Chow-Lam form of a positroid] \label{ex:positroid92}
Fix $k{=}2, n {=} 9, r {=} 7$ and the positroid~variety
$$ \mathcal{V} \,= \, V( q_{12}, q_{13}, q_{23}, q_{45}, q_{67}, q_{89} ) \,\, \subset \,\, {\rm Gr}(2,9). $$
This is the positroid $\beta = (3,2,2,2)$ in the notation of Section \ref{sec4}. 
Then $\mathcal{CL}_\mathcal{V}$ is a hypersurface in ${\rm Gr}(4,9)$. It consists of all
$4 \times 9$-matrices $X $ whose columns, viewed as points in $\PP^3$, satisfy: 
some line in the plane $123$ meets the lines
$45$, $67$ and $89$. This happens if and only if the three lines intersect the
plane in collinear points. We can write these intersection points explicitly in Pl\"ucker coordinates. For example, 
intersection of $123$ and $45$ is the point $q_{2345} x_1 -q_{1345} x_2 + q_{1245} x_3,$ where $x_i$ is the $i$th column of 
$X$. Therefore, the Chow-Lam form~is
$$ \! {\rm CL}_\mathcal{V}  = 
{\rm det}\! \begin{small} \begin{bmatrix}
       q_{2345} \! &\!\! -q_{1345}\! &\! q_{1245} \\
       q_{2367} \! &\!\! -q_{1367} \!&\!  q_{1267} \\
       q_{2389} \!& \!\! -q_{1389} \!&\! q_{1289}
\end{bmatrix} \end{small} \! = 
q_{1234} q_{1237}q_{5689}+q_{1234} q_{1236} q_{5789} 
-q_{1235} q_{1236} q_{4789}-q_{1235} q_{1237} q_{4689}.
$$
Projections of $\mathcal{V}$ into ${\rm Gr}(2,7)$ satisfy equations in
twistor coordinates, by Corollary \ref{cor:projhyper}.
\end{example}

The goal of this article is to develop the theory and practise
 of Chow-Lam forms, using that of Chow forms as a guide. In this, we transition
from subvarieties of projective space to subvarieties
of Grassmannians. Our motivation for this
comes from  algebraic geometry, as in
Examples \ref{ex:ruledsurface} and \ref{ex:fano}, and from
combinatorics and physics, as in
Examples~\ref{ex:positroid92} and \ref{ex:dmitrii}.

We start in Section \ref{sec:two} with a review
of coordinate systems on Grassmannians
and basics on Chow forms, such as the Intersection Formula and
the Projection Formula.
In Section \ref{sec:three} we develop
the corresponding theory of  Chow-Lam forms. Building on
the work of Lam \cite{LamStanley}, we give
a criterion for $\mathcal{CL}_\mathcal{V}$ to have codimension one,~and 
we present a formula for the degree of ${\rm CL}_\mathcal{V}$.
In Section \ref{sec4} we turn to varieties given by
matroids and positroids.
We compute their Chow-Lam degrees and Chow-Lam forms
in some interesting cases, such as Theorem \ref{thm:catalan}.
In Section \ref{sec5} we introduce the Hurwitz-Lam form,
which governs non-transversal intersections of complementary dimension.
This generalizes \cite{Hur} and
can be used to compute branch loci.

\section{Coordinates and Chow Forms}\label{sec:two}

This paper develops tools for computing with
subvarieties in Grassmannians. To this end, it is important for us
to be very precise about the coordinates to be used.
We distinguish four different coordinate systems to represent
 a linear subspace $L \subset \CC^n$, corresponding
to a point in ${\rm Gr}(k,n)$. What follows
is consistent with the conventions adopted in \cite{DS, Cortona, Hur}.

If $L$ is given to us as the kernel of an $(n-k) \times n$-matrix then
the entries of that matrix are called the {\em primal Stiefel coordinates}
and its maximal minors are the
{\em primal Pl\"ucker coordinates},  denoted $p_{i_1 i_2 \cdots i_{n-k}}$.
If $L$ is given to us as the row space of a $k \times n$-matrix then
the entries of that matrix are called the {\em dual Stiefel coordinates}
and its maximal minors are the
{\em dual Pl\"ucker coordinates},  denoted $q_{j_1 j_2 \cdots j_k}$.
After complementing indices,
 primal and dual Pl\"ucker coordinates agree up to multiplication by $(-1)^{j_1+\cdots+j_k}$, indicating
the sign of the permutation $(j_1, ..., j_k)$ of $(1, ..., k)$.
For example, the ten Pl\"ucker
coordinates on ${\rm Gr}(3,5)$~are
\setcounter{MaxMatrixCols}{20}
\begin{equation}
\label{eq:primaldual35}
 \begin{matrix}
{\rm Primal} & & p_{12} & p_{13} & p_{14} & p_{15} & p_{23} & p_{24} & p_{25} & p_{34} &
p_{35} & p_{45}, \\ {\rm Dual} & &
q_{345} & -q_{245} & q_{235} & -q_{234} & q_{145} & -q_{135} & q_{134} & q_{125} & -q_{124} & q_{123}.
\end{matrix}
\end{equation}
In geometric applications, $L$  represents a projective subspace
of dimension $k-1$ in $\PP^{n-1}$. In the primal perspective,
$L$ is given as the intersection of hyperplanes.
In the  dual perspective, $L$ is the span of points.
Which of these is preferable depends on whether $k$ or
$n-k$ is smaller.

\begin{remark}
Pl\"ucker coordinates are always alternating with respect to permuting indices.
For instance, in (\ref{eq:primaldual35}) we have
$\,p_{13} = -p_{31} \,=\, -q_{245} = q_{254} = q_{425} = - q_{452} = -q_{524} = q_{542}$.
\end{remark}

Let $\mathcal{V}$ be an irreducible variety of dimension $d$ in $\PP^{n-1}$.
We now define the Chow form of $\mathcal{V}$.
Let $\mathcal{C}_\mathcal{V}$ be the subvariety of ${\rm Gr}(n-d-1,n)$ whose points are
subspaces $L$ such that $L \cap \mathcal{V} \not= \emptyset$
in $\PP^{n-1}$. Then $\mathcal{C}_\mathcal{V}$ has codimension one.
Since each Grassmannian has Picard group $\ZZ$,
the hypersurface $\mathcal{C}_\mathcal{V}$
is the zero set of a single polynomial in Pl\"ucker coordinates.
This polynomial is denoted
by ${\rm C}_\mathcal{V}$ and called the {\em Chow form} of $\mathcal{V}$.
It is unique up to the Pl\"ucker relations. 
We will show in Corollary \ref{cor:chowdegree} that the degree of ${\rm C}_\mathcal{V}$ in Pl\"ucker coordinates equals the
degree of $\mathcal{V}$ in~$\PP^{n-1}$.

The Chow form ${\rm C}_\mathcal{V}$ can be written in either primal Pl\"ucker coordinates,
 dual Pl\"ucker coordinates, primal Stiefel coordinates, or dual Stiefel coordinates.
All four variants are useful, depending on the context.
We illustrate this for the rational normal curve in $\PP^4$.

\begin{example}[$d=1,n=5$] \label{ex:quarticcurve}
Let $\mathcal{V}$ be the curve $(1:t:t^2:t^3:t^4)$ in $\PP^4$.
As in \cite[Section 1.2]{DS},
its Chow form  in primal Pl\"ucker coordinates  is the determinant of the  {\em B\'ezout matrix}:
\begin{equation}
\label{eq:bezout}
{\rm C}_\mathcal{V} \,\, = \,\, {\rm det}
\begin{bmatrix}
\,p_{12} & p_{13} & p_{14} & p_{15} \,\\
\,p_{13} & p_{14} + p_{23} & p_{15} + p_{24} & p_{25}\,  \\
\,p_{14} & p_{15} + p_{24} & p_{25} + p_{34} & p_{35}\,  \\
\,p_{15} & p_{25} & p_{35} & p_{45} \,
\end{bmatrix}.
\end{equation}
Passing to primal Stiefel coordinates $p_{ij} = a_i b_j - a_j b_i$, we obtain the {\em Sylvester resultant}
$$ {\rm C}_\mathcal{V} \,\, = \,\,
{\rm Res}_t \bigl( a_1 + a_2 t + a_3 t^2 + a_4 t^3 + a_5 t^4, 
\, b_1 + b_2 t + b_3 t^2 + b_4 t^3 + b_5 t^4\bigr). $$
For the formula in dual Pl\"ucker coordinates, replace
each $p_{ij}$ with the $\pm q_{klm}$ below it in (\ref{eq:primaldual35}).
Replacing the $q_{klm}$ with the $3 \times 3$ minors of
a $3 \times 5$ matrix, we obtain the formula for ${\rm C}_\mathcal{V}$
in dual Stiefel coordinates. This is a polynomial of degree $12$ in $15$ unknowns. It
characterizes triples of  binary quartics whose
linear span contains the fourth power
of some linear form.
\end{example}

An analogue to the B\'ezout formula exists for arbitrary curves in $\PP^{n-1}$.
This is explained in \cite[Section 4]{ES}. In their article \cite{ES},
Eisenbud and Schreyer  present a
method for computing determinantal formulas for Chow forms. 
This rests on syzygies for Ulrich sheaves on $\mathcal{V}$.
We are optimistic that this generalizes to Chow-Lam forms in the setting of Grassmannians.

But first we review basics on Chow forms, following the exposition in \cite{DS}.
We present the formulas for  intersections and projections of
projective varieties in terms of their Chow forms.  
We begin with intersections. The following result is found in \cite[Proposition 2.1]{DS}.

\begin{proposition}[Intersection Formula] \label{prop:chowintersec}
Suppose $dim(\mathcal{V}) = d$, and
let $L$ and $M$ be linear subspaces of $\PP^{n-1}$
such that ${\rm codim}(L \cap M) =
{\rm codim}(L) + {\rm codim}(M) = d+1$. Then
\begin{equation}
\label{eq:VVLM}
{\rm C}_{\mathcal{V} \cap L} (M) \, = \, {\rm C}_{\mathcal{V}}(L \cap M). 
\end{equation}
\end{proposition}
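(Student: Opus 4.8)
The plan is to unwind the definitions on both sides and recognize that they describe the same locus of linear subspaces. First I would fix notation: the left-hand side $\mathrm{C}_{\mathcal{V}\cap L}$ is the Chow form of the subvariety $\mathcal{V}\cap L$ of $\PP^{n-1}$. Since $L$ has codimension, say, $c_L$ and $\mathcal{V}$ has dimension $d$, under the transversality hypothesis the intersection $\mathcal{V}\cap L$ has dimension $d-c_L$, so its Chow form takes arguments in $\mathrm{Gr}(n-(d-c_L)-1,\,n)$. The subspace $M$ has codimension $c_M=d+1-c_L$, hence dimension $n-1-c_M = n-1-(d+1-c_L) = n-(d-c_L)-2$; that is exactly the projective dimension of a member of $\mathrm{Gr}(n-(d-c_L)-1,n)$, so $\mathrm{C}_{\mathcal{V}\cap L}(M)$ is well-defined. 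Likewise on the right-hand side, $L\cap M$ has codimension $c_L+c_M = d+1$, so $L\cap M$ represents a point of $\mathrm{Gr}(n-d-1,n)$, and $\mathrm{C}_{\mathcal{V}}(L\cap M)$ is well-defined. The dimension bookkeeping is routine but worth stating once so the reader sees both evaluations make sense.

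Next I would work at the level of the underlying incidence conditions. By the defining property of the Chow form, $\mathrm{C}_{\mathcal{V}\cap L}(M)=0$ if and only if $(\mathcal{V}\cap L)\cap M\neq\emptyset$ in $\PP^{n-1}$, and $\mathrm{C}_{\mathcal{V}}(L\cap M)=0$ if and only if $\mathcal{V}\cap(L\cap M)\neq\emptyset$. But $(\mathcal{V}\cap L)\cap M = \mathcal{V}\cap(L\cap M)$ as subsets of $\PP^{n-1}$ — intersection of sets is associative — so the two polynomials, regarded as functions on the relevant Grassmannian, have the same zero locus. Since each Chow form is (up to scalar and Pl\"ucker relations) the unique irreducible polynomial cutting out that hypersurface, the two sides agree up to a nonzero constant; with the standard normalization of Chow forms used in \cite{DS} the constant is $1$, giving (\ref{eq:VVLM}) as an identity. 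A small point to verify is that $\mathcal{V}\cap L$ is nonempty (and of the expected dimension) so that $\mathrm{C}_{\mathcal{V}\cap L}$ is a genuine, nonconstant Chow form; this follows from the transversality assumption ${\rm codim}(L\cap M)={\rm codim}(L)+{\rm codim}(M)$ together with projective dimension theory, or one can simply cite \cite[Proposition 2.1]{DS} for the generic behavior.

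The main obstacle is making the equality hold as polynomials, not merely as varieties — that is, pinning down the scalar. The cleanest route is to note that both sides are polynomials in the Stiefel coordinates of $M$ (equivalently in the Pl\"ucker coordinates of $M$), of the same degree $\deg\mathcal{V}$ in those coordinates by Corollary \ref{cor:chowdegree}, vanishing on the same irreducible hypersurface with the same multiplicity one; hence they differ by a constant independent of $M$. To fix that constant, specialize $M$ (or rather the Stiefel matrix representing $L\cap M$) to a convenient configuration — for instance choosing coordinates so that $L$ is a coordinate subspace and $M$ a complementary coordinate subspace — and observe that both normalized Chow forms reduce to the same classical resultant-type expression. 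Alternatively, and perhaps most economically, I would simply defer to \cite{DS}: the statement is their Proposition 2.1, so the role of this paragraph in our paper is expository, and a brief indication of the set-theoretic argument above, plus the uniqueness/normalization remark, suffices.
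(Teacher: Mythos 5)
Your proposal is correct and is essentially what the paper does: the paper gives no proof of this proposition at all, simply citing \cite[Proposition 2.1]{DS}, which is precisely the fallback you suggest in your final paragraph. Your fuller sketch (associativity of intersection identifies the two zero loci, uniqueness of the defining equation of an irreducible hypersurface in the Grassmannian gives equality up to a scalar, which is all one needs since Chow forms are only defined up to scaling and the Pl\"ucker relations) is the standard argument behind that citation and is sound.
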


To use this formula in practice, we need to express the Pl\"ucker coordinates
of $L \cap M$ via those of $L$ and $M$.
Let $\ell$ and $m$ be the dual Pl\"ucker coordinates of $L$ and $M$
respectively. Then the dual Pl\"ucker coordinates of $L \cap M$ are given by
the exterior product of $\ell$ and $m$:
\begin{equation}
\label{ex:exterior}
 p \,\,=\,\, \ell \,\wedge\, m.
 \end{equation}
For example, let $n=6$ and $d=2$ and suppose
that ${\rm codim}(L) = 1$ and ${\rm codim}(M) = 2$.
Then $\ell = (\ell_1,\ell_2,\ldots,\ell_6)$,
$m = (m_{12}, m_{13}, \ldots, m_{56})$, and
the $20$ coordinates of (\ref{ex:exterior}) are  as follows:
\begin{equation}
\label{eq:plm} p_{ijk} \,\,= \,\,\ell_i m_{jk} - \ell_j m_{ik} + \ell_k m_{ij} \qquad
{\rm for}\quad 1 \leq i < j < k \leq 6. 
\end{equation}
We illustrate this formula in a concrete scenario of interest in elimination theory \cite{Cortona}.

\begin{example}[Veronese surface] \label{ex:veronese1}
Fix the surface $\mathcal{V} = \{(1:x:y:x^2:xy:y^2)\}$ in~$\PP^5$.
Its Chow form ${\rm C}_\mathcal{V}$ is the resultant of three ternary quadrics. Explicitly,
${\rm C}_\mathcal{V}$ is a polynomial of degree $4$ in 
primal Pl\"ucker coordinates $p_{ijk}$ on ${\rm Gr}(3,6)$. 
See \cite[Section~2.2]{Cortona} for the formula.
The expansion of ${\rm C}_\mathcal{V}$ into primal Stiefel coordinates has
$21894$ terms; see \cite[eqn (4.5)]{CBMS}.

Let $L$ be a  hyperplane in $\PP^5$, with coordinates $\ell_i$. The curve
$\mathcal{V} \cap L$ is the Veronese embedding of the conic
$V( \ell_1 + \ell_2 x + \ell_3 y + \ell_4 x^2 + \ell_5 xy + \ell_6 y^2) \subset \PP^2$.
By substituting  (\ref{eq:plm}) into ${\rm C}_\mathcal{V}$, we obtain the Chow form
${\rm C}_{\mathcal{V} \cap L}$ of
this curve, written in primal Pl\"ucker coordinates $m_{ij}$.

Next let  ${\rm codim}(L) = 2$, with Pl\"ucker coordinates $\ell_{ij}$.
Then $\mathcal{V} \cap L$ consists of four points in $\PP^5$.
The Chow form ${\rm C}_{\mathcal{V} \cap L}(m_1,\ldots,m_6)$
is a quartic that factors into four linear forms.
\end{example}

\begin{corollary} \label{cor:chowdegree}
The degree of the Chow form ${\rm C}_\mathcal{V}$ equals the degree of the variety $\mathcal{V}$
in $\PP^{n-1}$.
\end{corollary}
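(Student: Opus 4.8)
The plan is to deduce the degree statement from the Intersection Formula (Proposition~\ref{prop:chowintersec}) by intersecting $\mathcal{V}$ with a generic linear subspace of complementary dimension. Recall that the degree of an irreducible variety $\mathcal{V} \subset \PP^{n-1}$ of dimension $d$ is, by definition, the number of points in $\mathcal{V} \cap \Lambda$ for a generic linear subspace $\Lambda$ of codimension $d$. On the other hand, the degree of the Chow form ${\rm C}_\mathcal{V}$ is its degree as a polynomial in the Pl\"ucker coordinates on ${\rm Gr}(n-d-1,n)$. The idea is that slicing $\mathcal{V}$ with a generic hyperplane drops the dimension by one and, via the Intersection Formula, reduces the relevant Grassmannian from ${\rm Gr}(n-d-1,n)$ to ${\rm Gr}(n-d-2,n)$ while restricting ${\rm C}_\mathcal{V}$ linearly in a new set of Pl\"ucker variables, so the degree is preserved; iterating $d$ times reaches a $0$-dimensional variety whose Chow form's degree is visibly the number of points.

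Concretely, first I would reduce to the case $d = 0$. Choose a generic hyperplane $L$ in $\PP^{n-1}$, so that $\mathcal{V} \cap L$ is irreducible of dimension $d-1$ and $\deg(\mathcal{V}\cap L) = \deg(\mathcal{V})$ (a generic hyperplane section preserves degree). Apply Proposition~\ref{prop:chowintersec}: for a generic linear subspace $M$, we have ${\rm C}_{\mathcal{V}\cap L}(M) = {\rm C}_\mathcal{V}(L \cap M)$, where the dual Pl\"ucker coordinates of $L \cap M$ are the entries of the exterior product $\ell \wedge m$ as in~(\ref{ex:exterior}). Since $L$ is a hyperplane, each coordinate of $\ell \wedge m$ is \emph{linear} in the Pl\"ucker coordinates $m$ of $M$ (as illustrated by~(\ref{eq:plm}) with $\ell$ a vector). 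Hence substituting into ${\rm C}_\mathcal{V}$ shows $\deg {\rm C}_{\mathcal{V}\cap L} \le \deg {\rm C}_\mathcal{V}$; a genericity argument showing that no cancellation occurs for generic $\ell$ gives equality, so $\deg {\rm C}_{\mathcal{V}\cap L} = \deg {\rm C}_\mathcal{V}$. Iterating $d$ times, we arrive at a finite set $\mathcal{V}' = \mathcal{V} \cap L_1 \cap \cdots \cap L_d$ of $\deg(\mathcal{V})$ points in $\PP^{n-1}$, with $\deg {\rm C}_{\mathcal{V}'} = \deg {\rm C}_\mathcal{V}$.

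It then remains to compute the degree of the Chow form of a finite set of points. The Chow form ${\rm C}_{\mathcal{V}'}$ lives on ${\rm Gr}(n-1,n) = (\PP^{n-1})^\vee$, and its zero locus is the set of hyperplanes passing through one of the $\deg(\mathcal{V})$ points. A hyperplane through a fixed point $[v]$ is cut out by the linear form $\sum_i v_i u_i$ in the coordinates $u$ of the dual projective space; the union over all $\deg(\mathcal{V})$ points is therefore the product of $\deg(\mathcal{V})$ such linear forms. Hence ${\rm C}_{\mathcal{V}'}$ is a product of $\deg(\mathcal{V})$ linear forms, so its degree is exactly $\deg(\mathcal{V})$, and combined with the previous paragraph this proves $\deg {\rm C}_\mathcal{V} = \deg(\mathcal{V})$.

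The main obstacle is the equality (rather than inequality) of degrees at each slicing step: one must check that for a generic hyperplane $L$, the linear substitution $m \mapsto \ell \wedge m$ does not lower the degree of ${\rm C}_\mathcal{V}$, i.e. that the top-degree part of ${\rm C}_\mathcal{V}$ does not vanish identically after this substitution. This follows because the substitution is a dominant (generically surjective, up to the Pl\"ucker quadric) map onto the Pl\"ucker coordinates of subspaces contained in $L$, together with the fact that ${\rm C}_{\mathcal{V}\cap L}$ is genuinely a polynomial of the correct dimension-theoretic provenance; alternatively, one can run the argument in the reverse direction by noting that a generic $M$ of codimension $d+1$ meets $\mathcal{V}$ transversally in $\deg(\mathcal{V})$ reduced points and then relate multiplicities of linear factors to intersection multiplicities. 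I would present the transversality/genericity bookkeeping carefully, as it is the only non-formal ingredient.
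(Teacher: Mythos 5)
Your proposal is correct and follows essentially the same route as the paper: apply the Intersection Formula to cut $\mathcal{V}$ down to a finite set of points and observe that the Chow form of a point set is a product of linear forms, one per point. The only cosmetic difference is that the paper takes a single $L$ of codimension $d$ rather than iterating hyperplane sections, and it silently asserts the degree preservation that you (rightly) flag as the step needing a genericity argument.
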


\begin{proof} 
Let  ${\rm codim}(L) = d = {\rm dim}(\mathcal{V})$ in  Proposition \ref{prop:chowintersec}.
By (\ref{eq:VVLM}), the Chow forms ${\rm C}_\mathcal{V}$
and ${\rm C}_{\mathcal{V} \cap L} $ have the same
degree in their Pl\"ucker coordinates.
We claim that this degree is  $\delta = {\rm degree}(\mathcal{V})$.
This holds because
$\mathcal{V} \cap L$ is a finite set
$\,\bigl\{u^{(i)} = (u^{(i)}_1:\cdots:u^{(i)}_n)\,:\,i=1,2,\ldots,\delta \bigr\}$.
Its Chow form equals
${\rm C}_{\mathcal{V} \cap L}(m) =  \prod_{i=1}^\delta (u^{(i)}_1 m_1 + \cdots + u^{(i)}_n m_n)$.
This has degree $\delta$ in $m$. 
\end{proof}

We now turn to projections $\PP^{n-1} \dashrightarrow \PP^{r-1}$.
These are given by $r \times n$ matrices $Z = (z_{ij})$.
Let $\mathcal{V} \subset \PP^{n-1}$ be a variety of dimension
$d \leq r-2$.
We write $Z(\mathcal{V})$ for the closure of the image of $\mathcal{V}$ in $\PP^{r-1}$.
For general $Z$, the variety $Z(\mathcal{V})$ 
has  dimension $d$ and degree $\delta = {\rm degree}(\mathcal{V})$ in $\PP^{r-1}$.
The hypersurface $\mathcal{C}_{Z(\mathcal{V})}$ lives in ${\rm Gr}(r-d-1,r)$. We shall
write its defining polynomial  ${\rm C}_{Z (\mathcal{V})}$
in terms of dual Stiefel coordinates.
We represent an element of ${\rm Gr}(r-d-1,r)$ as the
column span of an $r \times (r-d-1)$ matrix
$Y$ with unknown entries. The concatenation
$\,[\, Z \,| \, Y \,] \,$ is a matrix with $r$ rows and $n+r-d-1$ columns.
For any sequence $1 \leq i_1 < i_2 < \cdots < i_{d+1} \leq n$, we introduce a
 {\em twistor coordinate}
$\,[\, Z \,| \, Y \,]_{i_1 i_2 \cdots i_{d+1}}$. This is the
 $r \times r $ subdeterminant of $\,[\, Z \,| \, Y \,] \,$
given by the $r-d-1$ columns of $Y$ and
the $d+1$ columns of $Z$ indexed by $i_1, i_2,\ldots,i_{d+1}$.

The twistor coordinate $[\, Z \,| \, Y \,]_{i_1 i_2 \cdots i_{d+1}}$
is a linear form in the maximal minors of the matrix $Y$.
These are dual Pl\"ucker coordinates on ${\rm Gr}(r-d-1,r)$.
They are preferred when $r-d$ is small. However,
if $d$ is small then it is better to use primal Pl\"ucker coordinates.
We find these by multiplying $Z$ on the left with a matrix
of primal Stiefel coordinates, as in~(\ref{eq:twistorex}).

\begin{proposition}[Projection Formula] \label{prop:chowprojection}
The Chow form ${\rm C}_{Z (\mathcal{V})}$ in dual Stiefel coordinates equals the
Chow form ${\rm C}_{\mathcal{V}}$ with
primal Pl\"ucker coordinates replaced with twistor coordinates:
\begin{equation}
\label{eq:twistor1}
  \qquad p_{i_1 i_2 \cdots i_{d+1}} \,\, = \,\, \,[\, Z \,| \, Y \,]_{i_1 i_2 \cdots i_{d+1}}\quad
{\rm for} \,\, 1 \leq i_1 < i_2 < \cdots < i_{d+1} \leq n. 
\end{equation}
\end{proposition}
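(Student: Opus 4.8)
The plan is to evaluate both sides of $(\ref{eq:twistor1})$ on ${\rm Gr}(r-d-1,r)$ and to match the resulting hypersurfaces. Write a point of ${\rm Gr}(r-d-1,r)$ as the column span of an $r\times(r-d-1)$ matrix $Y$; this is the codimension-$(d+1)$ subspace $L'\subset\PP^{r-1}$ at which the Chow form ${\rm C}_{Z(\mathcal{V})}$ is evaluated, and the entries of $Y$ are the dual Stiefel coordinates of the statement. Because $Z$ is general and $d\le r-2$, the center $\{x:Zx=0\}$ of the projection is a projective subspace of dimension $n-1-r$, and hence misses the $d$-dimensional variety $\mathcal{V}$; thus $Z$ restricts to a morphism on $\mathcal{V}$, which is proper, so its image $Z(\mathcal{V})$ is closed, of dimension $d$ and degree $\delta={\rm degree}(\mathcal{V})$. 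The first step is the elementary observation that, for $v\in\mathcal{V}$, we have $Zv\in L'$ if and only if $v\in Z^{-1}(L')$, and that $Z^{-1}(L')$ is a linear subspace of $\PP^{n-1}$ of codimension exactly $d+1$ (since $Z$ has rank $r$), that is, a point of ${\rm Gr}(n-d-1,n)$. Therefore $L'\cap Z(\mathcal{V})\ne\emptyset$ if and only if $Z^{-1}(L')\cap\mathcal{V}\ne\emptyset$; equivalently, $\mathcal{C}_{Z(\mathcal{V})}\subset{\rm Gr}(r-d-1,r)$ is the preimage of $\mathcal{C}_\mathcal{V}\subset{\rm Gr}(n-d-1,n)$ under the map $\pi:L'\mapsto Z^{-1}(L')$.

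The second and main step is to check that $\pi$ is exactly the substitution $(\ref{eq:twistor1})$, i.e.\ that the twistor coordinates $[\,Z\,|\,Y\,]_{i_1\cdots i_{d+1}}$ are, up to one common nonzero scalar independent of $\{i_1,\dots,i_{d+1}\}$, the primal Pl\"ucker coordinates of $Z^{-1}({\rm colspan}(Y))$. To see this, fix a full-rank $(d+1)\times r$ matrix $W$ with $WY=0$; then ${\rm colspan}(Y)=\ker W$, so $Z^{-1}({\rm colspan}(Y))=\ker(WZ)$, and the primal Pl\"ucker coordinate of this subspace indexed by $i_1<\cdots<i_{d+1}$ is the corresponding maximal minor of the $(d+1)\times n$ matrix $WZ$. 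Multiplying on the left the $r\times r$ matrix built from the columns of $Y$ and columns $i_1,\dots,i_{d+1}$ of $Z$ by the invertible matrix $\left[\begin{smallmatrix}W\\ Y^{\top}\end{smallmatrix}\right]$, and expanding the resulting block matrix (whose top-left $(d+1)\times(r-d-1)$ block vanishes), one obtains that $[\,Z\,|\,Y\,]_{i_1\cdots i_{d+1}}$ equals $\bigl(\pm\det(Y^{\top}Y)\big/\det\!\left[\begin{smallmatrix}W\\ Y^{\top}\end{smallmatrix}\right]\bigr)$ times that minor of $WZ$, and the prefactor does not involve the index set. (Alternatively, the Laplace expansion of $[\,Z\,|\,Y\,]_{i_1\cdots i_{d+1}}$ along the columns of $Y$ is precisely the coordinate formula for applying $\wedge^{d+1}Z^{\top}$ to the decomposable $(d+1)$-vector representing the annihilator of $Z^{-1}({\rm colspan}(Y))$.) This is the step I expect to be the most delicate: one must keep the sign conventions straight both in the primal/dual correspondence of $(\ref{eq:primaldual35})$ and in the block-determinant identity, and one must be content with a scalar that depends on $Y$ — which is harmless, since Pl\"ucker and twistor coordinates are homogeneous coordinates.

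Granting these two steps, substituting $(\ref{eq:twistor1})$ into ${\rm C}_\mathcal{V}$ yields a polynomial in the maximal minors of $Y$, hence in the Pl\"ucker coordinates on ${\rm Gr}(r-d-1,r)$, whose zero set is $\mathcal{C}_{Z(\mathcal{V})}$ by Step 1. Each twistor coordinate is linear in those minors, so this polynomial has degree at most $\delta$, while ${\rm C}_{Z(\mathcal{V})}$ has degree $\delta={\rm degree}(Z(\mathcal{V}))$ by Corollary \ref{cor:chowdegree}. The substituted polynomial is not identically zero, since a general codimension-$(d+1)$ subspace $L'$ misses the $d$-dimensional $Z(\mathcal{V})$, so by Step 1 the subspace $Z^{-1}(L')$ avoids $\mathcal{V}$ and ${\rm C}_\mathcal{V}$ does not vanish at the corresponding twistor coordinates. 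For irreducible $\mathcal{V}$ the hypersurface $\mathcal{C}_{Z(\mathcal{V})}$ is irreducible, so — using that ${\rm Gr}(r-d-1,r)$ has Picard group $\ZZ$ — the substituted polynomial is a nonzero constant times a power of ${\rm C}_{Z(\mathcal{V})}$, and comparing degrees forces that power to be $1$. This is $(\ref{eq:twistor1})$ modulo the Pl\"ucker relations, and the reducible case follows since both the Chow form and the substitution $(\ref{eq:twistor1})$ are multiplicative over the irreducible components of $\mathcal{V}$.
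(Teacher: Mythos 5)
Your proof is correct and follows essentially the same route as the paper: the paper's proof consists of the geometric identity ${\rm C}_{Z(\mathcal{V})}(Y) = {\rm C}_{\mathcal{V}}(Z^{-1}(Y))$, justified by the equivalence $Y \cap Z(\mathcal{V}) \neq \emptyset \Leftrightarrow Z^{-1}(Y) \cap \mathcal{V} \neq \emptyset$, with the coordinate bookkeeping delegated to a citation of \cite[Section 2.2]{DS}. You make the same two moves, and additionally supply the block-determinant verification that the twistor coordinates are (up to a common scalar) the primal Pl\"ucker coordinates of $Z^{-1}(Y)$, together with the degree and irreducibility argument pinning down the polynomial identity --- details the paper leaves to the reference.
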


\begin{proof}
This is a reinterpretation of the formula in \cite[Section 2.2]{DS}
which was derived for the projection from a point.
In more geometric terms, our formula can be written as follows:
$$ {\rm C}_{Z (\mathcal{V})}(Y) \, = \, {\rm C}_{\mathcal{V}} (Z^{-1}(Y)) .$$
Here $Y$ is a  subspace of dimension $r-d-2$ in $\PP^{r-1}$.
Its preimage $Z^{-1}(Y)$ is a  subspace of dimension $n-d-2$ in $\PP^{n-1}$.
We have $\,Y \cap Z(\mathcal{V}) \not= \emptyset\,$ if and only if
$\,Z^{-1}(Y) \cap \mathcal{V} \not= \emptyset$.
\end{proof}

The case of most interest in the projection formula is $r = d+2$, when
 $Z(\mathcal{V})$ is a hypersurface of degree $\delta$
in $\PP^{r-1}$. Here $Y = (y_1,y_2,\ldots,y_r)^T$ is the column vector of
coordinates on $\PP^{r-1}$.
Each of the $\binom{n}{r-1}$ twistor coordinates $[\, Z \,| \, Y \,]_{i_1 i_2 \cdots i_{r-1}}\,$
is a linear form in $y_1,y_2,\ldots,y_r$.

\begin{corollary} \label{cor:projhyper}
The equation of any hypersurface obtained by projecting
$\mathcal{V} \subset \PP^{n-1}$ into $\PP^{r-1}$ is read off from
the Chow form
${\rm C}_\mathcal{V}$ by replacing Pl\"ucker coordinates 
with linear forms via~(\ref{eq:twistor1}).
\end{corollary}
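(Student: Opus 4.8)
The plan is to deduce this as the special case $r = d+2$ of the Projection Formula (Proposition~\ref{prop:chowprojection}), after identifying the Chow form of a hypersurface with its defining equation. Since $\dim(\mathcal{V}) = d$, a projection of $\mathcal{V}$ that is a hypersurface in $\PP^{r-1}$ forces $r-2 = d$ (for general $Z$ the projection is generically finite onto its image, so the image again has dimension $d$). Hence the relevant instance of Proposition~\ref{prop:chowprojection} is exactly $r = d+2$, and everything reduces to unwinding that identity.

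First I would describe the Chow form of the image in this case. The ambient Grassmannian is ${\rm Gr}(r-d-1,r) = {\rm Gr}(1,r)$, which is $\PP^{r-1}$ itself, and its points are the $0$-dimensional projective linear subspaces of $\PP^{r-1}$, i.e.\ ordinary points. For the irreducible hypersurface $W = Z(\mathcal{V}) \subset \PP^{r-1}$, the condition $L \cap W \neq \emptyset$ defining $\mathcal{C}_W$ just says that the point $L$ lies on $W$, so $\mathcal{C}_W = W$ and ${\rm C}_W$ is, up to a nonzero scalar, the defining polynomial of $W$. By Corollary~\ref{cor:chowdegree} it has degree $\delta = {\rm degree}(\mathcal{V})$, matching ${\rm degree}(W)$ for general $Z$.

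Next I would make the twistor substitution (\ref{eq:twistor1}) explicit here. With $r-d-1 = 1$, the matrix $Y$ is the single column $(y_1,\ldots,y_r)^T$, so the twistor coordinate $[\,Z\,|\,Y\,]_{i_1\cdots i_{d+1}}$ is the $r\times r$ determinant of the matrix whose columns are $Y$ together with columns $i_1,\ldots,i_{d+1}$ of $Z$; Laplace expansion along the $Y$-column exhibits it as a linear form in $y_1,\ldots,y_r$ whose coefficients are maximal minors of the corresponding $r\times(d+1)$ submatrix of $Z$. Substituting these $\binom{n}{d+1}$ linear forms for the primal Pl\"ucker coordinates of $\mathcal{V}$ in ${\rm C}_\mathcal{V}$ is precisely the operation in Proposition~\ref{prop:chowprojection}, and the left-hand side of that identity is ${\rm C}_{Z(\mathcal{V})}(Y)$, which by the previous paragraph is the equation of the projected hypersurface. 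That is the assertion of the corollary.

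The argument is essentially bookkeeping, so there is no genuine obstacle; the one point deserving care is the passage from ``general $Z$'' to ``any'' $Z$ whose image is a hypersurface. For special $Z$ the linear forms (\ref{eq:twistor1}) may become dependent, so the substituted polynomial can acquire extra factors, drop in degree, or vanish identically when $\dim Z(\mathcal{V}) < r-2$. I would therefore state the corollary for those $Z$ for which $Z(\mathcal{V})$ is a hypersurface of the expected dimension, and remark that in the remaining cases the radical of the substituted polynomial still cuts out $Z(\mathcal{V})$ whenever that image is a hypersurface, by specialization from the generic situation.
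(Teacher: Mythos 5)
Your proof is correct and follows essentially the same route as the paper: the corollary is exactly the case $r=d+2$ of Proposition~\ref{prop:chowprojection}, where ${\rm Gr}(r-d-1,r)=\PP^{r-1}$, the Chow form of the image hypersurface is its defining equation, and the twistor coordinates become linear forms in $y_1,\ldots,y_r$. Your closing caveat about special versus general $Z$ is a reasonable extra precaution, but otherwise the argument matches the paper's.
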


\begin{example}
Rational quartic curves in $\PP^2$ are images of the
curve $\mathcal{V}$ in Example \ref{ex:quarticcurve}
under  projections $Z: \PP^4 \dashrightarrow \PP^2$.
To compute these plane curves, we consider the $3 \times 6$ matrix
$$
[\, Z \,| \, Y \,] \,\, = \,\, \begin{small}
\begin{bmatrix}
\, z_{11} & z_{12} & z_{13} & z_{14} & z_{15} & \, y_1 \,\\
\, z_{21} & z_{22} & z_{23} & z_{24} & z_{25} & \, y_2 \, \\
\, z_{31} & z_{32} & z_{33} & z_{34} & z_{35} & \, y_3 \,
\end{bmatrix}. \end{small}
$$
The equation of the plane quartic curve $Z(\mathcal{V})$ is
obtained from the B\'ezout determinant  in (\ref{eq:bezout})
by replacing $p_{ij}$ with the $3 \times 3$ minor
of this $3 \times 6$ matrix having column indices $i,j,6$.

We can similarly project the Veronese surface (Example~\ref{ex:veronese1}) 
into $\PP^3$ with a $4 \times 6$ matrix $Z$.
The resulting rational quartics are
known as {\em Roman surfaces} or {\em Steiner surfaces}.
Algebraically, we 
substitute twistor coordinates into 
the Chow form in \cite[Section~2.2]{Cortona}.
\end{example}

The classical theory of Chow forms extends
naturally to a hierarchy of higher Chow forms,
which characterize linear spaces that are tangent to $\mathcal{V}$.
These are also known as  coisotropic hypersurfaces.
Their degrees are the polar degrees of $\mathcal{V}$,
as shown by Kohn in \cite{kohn}.
On the far end of the hierarchy is the
dual variety, which characterizes hyperplanes tangent to $\mathcal{V}$.
On the near end, right next to the Chow form, is the
Hurwitz form, which we now review.

Fix $\mathcal{V} \subset \PP^{n-1}$ of dimension $d$ and degree $\delta$.
For $L \in {\rm Gr}(n-d,n)$ generic, the intersection $\mathcal{V} \cap L$ consists of
$\delta$ distinct points. Let $\mathcal{H}_\mathcal{V}$ be the 
subvariety of ${\rm Gr}(n-d,n)$ consisting of all $L$ where this fails.
Geometrically, such $L $ are tangent to $\mathcal{V}$.
If $\delta \geq 2$ then $\mathcal{H}_\mathcal{V}$ is a hypersurface in ${\rm Gr}(n-d,n)$.
This hypersurface was studied in \cite{Hur}.
Its defining equation ${\rm H}_\mathcal{V}$ is  the
{\em Hurwitz form} of $\mathcal{V}$. 
We close this section by deriving  the Hurwitz analog to
Corollary~\ref{cor:projhyper}.

\begin{theorem} \label{thm:hurwitzprojection}
The equation of the branch locus of any projection of
$\mathcal{V} \subset \PP^{n-1}$ into $\,\PP^{d}$ is read off from
the Hurwitz form
${\rm H}_\mathcal{V}$ by replacing Pl\"ucker coordinates 
with linear forms via~(\ref{eq:twistor1}).
\end{theorem}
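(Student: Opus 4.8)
The plan is to imitate the way Corollary~\ref{cor:projhyper} follows from the Projection Formula (Proposition~\ref{prop:chowprojection}), replacing the condition ``$L\cap\mathcal{V}\neq\emptyset$'' by the condition ``$L\cap\mathcal{V}$ is not $\delta$ reduced points'' that cuts out the Hurwitz hypersurface $\mathcal{H}_\mathcal{V}$. Throughout, $\mathcal{V}$ is irreducible of dimension $d$ and degree $\delta\geq 2$, so that $\mathcal{H}_\mathcal{V}\subset{\rm Gr}(n-d,n)$ is a hypersurface. A projection $\PP^{n-1}\dashrightarrow\PP^{d}$ is a $(d+1)\times n$ matrix $Z=(z_{ij})$. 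First I would set up the geometric dictionary: for generic $Z$ the center of projection $\PP(\ker Z)\cong\PP^{n-d-2}$ is disjoint from $\mathcal{V}$, so $Z|_\mathcal{V}\colon\mathcal{V}\to\PP^{d}$ is a dominant morphism, which is generically $\delta$-to-one because $\dim\mathcal{V}=d=\dim\PP^{d}$. The fiber of $Z|_\mathcal{V}$ over a point $y\in\PP^{d}$ is $\mathcal{V}\cap Z^{-1}(y)$, where the preimage $Z^{-1}(y)\subset\PP^{n-1}$ is a linear space of codimension $d$, i.e.\ a point of ${\rm Gr}(n-d,n)$. Hence, by the very definition of $\mathcal{H}_\mathcal{V}$, a point $y$ lies on the branch locus $\mathcal{B}\subset\PP^{d}$ of $Z|_\mathcal{V}$ if and only if $Z^{-1}(y)\in\mathcal{H}_\mathcal{V}$; equivalently, $\mathcal{B}=\phi^{-1}(\mathcal{H}_\mathcal{V})$ for the morphism $\phi\colon\PP^{d}\to{\rm Gr}(n-d,n)$ sending $y$ to $Z^{-1}(y)$.

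The second step is to compute $\phi$ in Pl\"ucker coordinates and to recognize the twistor substitution~(\ref{eq:twistor1}). Picking a $d\times(d+1)$ matrix $W_y$ with $\ker W_y=\CC y$, we have $Z^{-1}(y)=\ker(W_yZ)$, so the primal Pl\"ucker coordinate $p_{i_1\cdots i_d}$ of $Z^{-1}(y)$ is the $d\times d$ minor of $W_yZ$ on columns $i_1,\dots,i_d$. The maximal minors of $W_y$ are, up to one common nonzero scalar, the signed coordinates $\pm y_k$ of its kernel vector, so Cauchy--Binet identifies $p_{i_1\cdots i_d}(Z^{-1}(y))$, up to a scalar independent of $(i_1,\dots,i_d)$, with the $(d+1)\times(d+1)$ minor of the augmented matrix $[\,Z\,|\,y\,]$ formed by the columns $i_1,\dots,i_d$ of $Z$ together with the column $y$. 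This is exactly the substitution~(\ref{eq:twistor1}): viewing $\mathcal{H}_\mathcal{V}\subset{\rm Gr}(n-d,n)={\rm Gr}(n-(d-1)-1,n)$ as the Grassmannian hosting the Chow form of a ``dimension $d-1$'' variety, it is the hypersurface case $r=d+1$ of Proposition~\ref{prop:chowprojection} with $Y=y$ a single column vector. Thus substituting these linear forms for the Pl\"ucker coordinates in ${\rm H}_\mathcal{V}$ produces the polynomial ${\rm H}_\mathcal{V}\circ\phi$ in $y_1,\dots,y_{d+1}$, whose zero set is $\mathcal{B}$.

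The last step is to verify that ${\rm H}_\mathcal{V}\circ\phi$ is, up to a nonzero scalar, the \emph{reduced} defining equation of $\mathcal{B}$ and not merely some polynomial vanishing on it, and this is the point I expect to require the most care. It is handled by genericity of $Z$, just as in the Chow case. As $Z$ and $y$ vary, $Z^{-1}(y)$ sweeps out a dense subset of ${\rm Gr}(n-d,n)$, and a generic codimension-$d$ linear space meets $\mathcal{V}$ in $\delta$ reduced points; hence $\phi(\PP^{d})\not\subset\mathcal{H}_\mathcal{V}$ for generic $Z$, so ${\rm H}_\mathcal{V}\circ\phi\not\equiv 0$. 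For the multiplicity, I would observe that the generic point of $\mathcal{B}$ comes from a fiber with a single ordinary tangency to the smooth locus of $\mathcal{V}$, which is the generic (hence smooth) point of $\mathcal{H}_\mathcal{V}$, and that for generic $Z$ the morphism $\phi$ meets $\mathcal{H}_\mathcal{V}$ transversally there; then $\phi^{*}\mathcal{H}_\mathcal{V}$ is reduced along $\mathcal{B}$, and since ${\rm H}_\mathcal{V}$ generates the (reduced) ideal of $\mathcal{H}_\mathcal{V}$ it follows that ${\rm H}_\mathcal{V}\circ\phi$ generates the ideal of $\mathcal{B}$. An alternative to the transversality argument would be a degree check, comparing $\deg{\rm H}_\mathcal{V}$ with the known degree of the branch hypersurface of a generic projection of $\mathcal{V}$.
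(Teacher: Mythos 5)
Your proposal is correct and follows essentially the same route as the paper: identify the branch locus as the preimage of the Hurwitz hypersurface under $y \mapsto Z^{-1}(y)$, and observe that evaluating ${\rm H}_\mathcal{V}$ at this fiber is exactly the twistor substitution from the Projection Formula (Proposition \ref{prop:chowprojection}). Your added Cauchy--Binet computation and the discussion of why the pullback is the reduced equation of the branch locus are refinements of points the paper leaves implicit, not a different argument.
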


\begin{proof}
The branch locus consists of all points $y$ such that
the fiber $Z^{-1}(y)$ is tangent to $\mathcal{V}$ at some point.
This happens if and only if the Hurwitz form ${\rm H}_\mathcal{V}$
vanishes at the subspace $Z^{-1}(y)$. 
Evaluating the Hurwitz form in primal Pl\"ucker coordinates $p$ at  any such fiber
translates into the algebraic operation of replacing $p$ with  twistor coordinates,
by Proposition \ref{prop:chowprojection}.
\end{proof}

\begin{example}[Branch curve of the Veronese]
Let $Z$ be a general $3 \times 6$ matrix,
defining a projection $\PP^5 \dashrightarrow \PP^2$,
and let $\mathcal{V}$ be the Veronese surface in Example \ref{ex:veronese1}.
The Hurwitz form ${\rm H}_\mathcal{V}$ has degree six
in the Pl\"ucker coordinates. Its expansion in
primal Stiefel coordinates is the  {\em tact invariant} of two conics. The explicit 
formula in primal Pl\"ucker coordinates $p_{ij}$
is shown in \cite[Example 2.7]{Hur}.
The branch curve in $\PP^2$ has degree six. Its equation is 
obtained from ${\rm H}_\mathcal{V}$ by replacing $p_{ij} $ with
the $3 \times 3$ minor indexed by $i,j,7$
in the $3 \times 7 $ matrix $ [\, Z \,| \, Y \,]$.
\end{example}

\section{From Projective Space to the Grassmannian}
\label{sec:three}

In the previous section, we encoded a subvariety $\mathcal{V}$ in 
$\PP^{n-1} = {\rm Gr}(1,n)$ by a single equation ${\rm C}_\mathcal{V}$.
 We here replace the ambient projective space with an
arbitrary Grassmannian ${\rm Gr}(k,n)$.
Now the degree of $\mathcal{V}$ is no longer an integer but
it is a cohomology class $[\mathcal{V}]$. Recall that $H^*({\rm Gr}(k,n), \ZZ)$
is isomorphic to $\ZZ^{\binom{n}{k}}$, with basis given as follows.
The set $\binom{[n]}{k}$  of $k$-sets 
$I = \{i_1 < i_2 < \cdots < i_k \} $ in $[n] = \{1,\ldots,n\}$  is partially ordered by
setting $J \leq I$ if $j_1 \leq i_1$  and  $j_2 \leq i_2 $ and $\cdots$ and $j_k \leq i_k$.
For $I \in \binom{[n]}{k}$, the {\em Schubert variety}
$\mathcal{S}_I$ is defined by 
$q_J = 0$  for all $J$ that do not satisfy $J \geq I$. Equivalently, it consists of $k$-dimensional vector spaces which meet $\text{span}(e_1, \ldots, e_s)$ in dimension at least $i_s$ for each $s \leq n.$ The Schubert variety $\mathcal{S}_I$ is irreducible of dimension
$\sum_{s=1}^k (i_s-s)$. In particular, if
$I = \{1,\ldots,k-1,k\}$ then ${\rm dim}(\mathcal{S}_I) = 0$, so
$\mathcal{S}_I$ is a point.
The  Schubert classes $[\mathcal{S}_I]$ form a $\ZZ$-basis of
$H^*({\rm Gr}(k,n), \ZZ)$.

For any subvariety   $\mathcal{V}$, its cohomology class 
has a unique expansion into Schubert classes:
\begin{equation}
\label{eq:schubert} [\mathcal{V}] \,\,\, = \,\,\, \sum_I \delta_I( \mathcal{V}) \cdot [\mathcal{S}_I]. 
\end{equation}
Each coefficient $\delta_I(\mathcal{V})$ is a nonnegative integer, which is zero unless
$\,\sum_{s=1}^k (i_s - s) = {\rm dim}(\mathcal{V}) $.

There is a natural involution on $\binom{[n]}{k}$. Namely, the complement of $I$
is the index set
$$ I^c \,\, = \,\, \bigl\{\,n+1-i_k\,,\, n+1-i_{k-1},\ldots\,,\, n+1-i_1 \,\bigr\}. $$
The Schubert varieties $\mathcal{S}_I$ and $\mathcal{S}_{I^c}$ 
have complementary dimensions. For a general matrix $g \in {\rm GL}(n,\CC)$, 
and any index set $J$ that satisfies ${\rm dim}(\mathcal{S}_J) = {\rm dim}(\mathcal{S}_I) = 
{\rm codim}(\mathcal{S}_{I^c})$, we have
$$  g\, \mathcal{S}_J \, \,\cap \,\, \mathcal{S}_{I^C} \,\, = \,\,
\begin{cases}
\hbox{a point} & {\rm if} \,\,I = J, \\
\quad \,\emptyset & {\rm otherwise}.
\end{cases}
$$
This yields the following method for computing the 
cohomology class (\ref{eq:schubert}) from the ideal of~$\mathcal{V}$.

\begin{proposition} The coefficient $\delta_I(\mathcal{V})$ in the
class $[\mathcal{V}]$ of the subvariety $\mathcal{V} \subset {\rm Gr}(k,n)$ equals the number of
points in the intersection $  \,g\, \mathcal{V} \, \,\cap \,\, \mathcal{S}_{I^C} \,$
for a general matrix $g \in {\rm GL}(n,\CC)$.
\end{proposition}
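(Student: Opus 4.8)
The plan is to realize $\delta_I(\mathcal V)$ as an intersection number in the Chow ring of ${\rm Gr}(k,n)$, and then to show that this intersection number is computed transversally by a generic translate of $\mathcal V$ against the Schubert variety $\mathcal S_{I^c}$. First I would recall that the Schubert classes $[\mathcal S_J]$, as $J$ ranges over $\binom{[n]}{k}$, form a $\ZZ$-basis of $H^*({\rm Gr}(k,n),\ZZ)$, and that the intersection pairing in this ring is the standard Poincar\'e duality pairing: for $J$ and $J'$ with ${\rm dim}(\mathcal S_J) + {\rm dim}(\mathcal S_{J'}) = k(n-k)$ (equivalently $J' $ is ``dual'' to $J$ in the dimension sense), one has $[\mathcal S_J]\cdot[\mathcal S_{J'}] = 1$ if $J' = $ the complementary index set to $J$ in the sense of duality, and $0$ otherwise. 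Concretely this is exactly the displayed fact in the excerpt that $g\,\mathcal S_J \cap \mathcal S_{I^c}$ is a single point when $I = J$ and empty otherwise, for generic $g \in {\rm GL}(n,\CC)$ --- this is the Kleiman transversality statement specialized to Schubert varieties, which I will cite.

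Next I would pair the expansion $[\mathcal V] = \sum_J \delta_J(\mathcal V)[\mathcal S_J]$ against $[\mathcal S_{I^c}]$ in $H^*({\rm Gr}(k,n),\ZZ)$. Since $\delta_J(\mathcal V)$ vanishes unless ${\rm dim}(\mathcal S_J) = {\rm dim}(\mathcal V)$, and since ${\rm dim}(\mathcal S_{I^c}) = k(n-k) - {\rm dim}(\mathcal S_I) = k(n-k) - {\rm dim}(\mathcal V)$, the only term in the pairing that can survive is the one with $J = I$. The orthonormality of the Schubert basis under the intersection pairing then gives $[\mathcal V]\cdot[\mathcal S_{I^c}] = \delta_I(\mathcal V)$ as an integer (a class in top cohomology $H^{2k(n-k)}\cong\ZZ$).

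Finally I would convert this cohomological intersection number into an honest point count. By Kleiman's transversality theorem, for a general $g \in {\rm GL}(n,\CC)$ the translate $g\,\mathcal V$ meets $\mathcal S_{I^c}$ transversally (in particular properly, in dimension zero) along the smooth loci, and the degree of the $0$-cycle $g\,\mathcal V \cap \mathcal S_{I^c}$ equals the product $[\mathcal V]\cdot[\mathcal S_{I^c}]$ in cohomology; moreover transversality means every point of the intersection has multiplicity one, so this degree is literally the cardinality $\#\bigl(g\,\mathcal V \cap \mathcal S_{I^c}\bigr)$. Combining with the previous step yields $\delta_I(\mathcal V) = \#\bigl(g\,\mathcal V \cap \mathcal S_{I^c}\bigr)$, as claimed.

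The main obstacle is the transversality/properness input: one must be careful that $\mathcal V$ need not be contained in the smooth locus of the Grassmannian issue (there is none, ${\rm Gr}(k,n)$ is smooth) but rather that the \emph{singular locus} of $\mathcal S_{I^c}$ and the possible singularities of $\mathcal V$ do not cause trouble. The clean way around this is to invoke Kleiman's theorem in the form: for a homogeneous space $X$ under a connected group $G$ and subvarieties $A, B$, the translate $gA$ meets $B$ properly and, on the open dense locus where both are smooth, transversally, for generic $g$; since the intersection is finite, a dimension count shows it avoids the singular loci entirely, so all intersection points are transverse and the scheme-theoretic intersection is reduced. I would state this carefully and cite a standard reference, then the point count follows. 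An alternative, if one prefers to avoid Kleiman entirely, is to note that $\delta_I(\mathcal V)$ is by definition the coefficient recovering $[\mathcal V]$, hence a nonnegative integer, and that the generic intersection number with $\mathcal S_{I^c}$ computes precisely that coefficient by Poincar\'e duality --- but one still needs transversality to know the scheme-theoretic count equals the cohomological one, so the obstacle is genuinely there regardless of packaging.
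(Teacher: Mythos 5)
Your proof is correct and follows essentially the same route the paper intends: the paper states this proposition as an immediate consequence of the displayed duality fact that $g\,\mathcal{S}_J \cap \mathcal{S}_{I^c}$ is a single point when $J=I$ and empty otherwise, which is precisely the Poincar\'e-duality pairing you use. Your additional care with Kleiman transversality to pass from the cohomological intersection number to an honest reduced point count fills in the detail the paper leaves implicit, but the underlying argument is the same.
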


We can thus compute the numbers $\delta_I(\mathcal{V})$ 
either symbolically (using Gr\"obner bases) or numerically
(using numerical algebraic geometry). The latter approach
rests on {\em Schubert witness sets} from homotopy continuation.
These were introduced by Sottile in \cite[Section~4]{Sot}.

\smallskip

We now come to our main topic, namely the Chow-Lam form. Recall that 
$\mathcal{V} \subset {\rm Gr}(k,n)$ was assumed to have dimension $ k(r-k)-1$ for some $r \in \{k+1,\ldots,n\}$.
We define $\mathcal{CL}_\mathcal{V}$ to be the subvariety of 
${\rm Gr}(k+n-r,n)$ which consists of all points $P$ such that
the inclusion  $Q \subseteq P$ holds for some $Q \in \mathcal{V}$. 
We call $\mathcal{CL}_\mathcal{V}$ the {\em Chow-Lam locus}
of the given variety $\mathcal{V}$.

\begin{lemma} \label{lem:CLproper} If $\mathcal{V}$ is irreducible then
 $\mathcal{CL}_\mathcal{V}$ is a proper irreducible subvariety of ${\rm Gr}(k+n-r,n)$.
 \end{lemma}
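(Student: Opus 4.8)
The plan is to exhibit $\mathcal{CL}_\mathcal{V}$ as the image of an irreducible variety under a morphism to ${\rm Gr}(k+n-r,n)$, so that irreducibility follows formally, and then to argue properness (that is, $\mathcal{CL}_\mathcal{V} \neq {\rm Gr}(k+n-r,n)$) by a dimension count. Set $m = k+n-r$, so that $k \le m \le n-1$ since $r \in \{k+1,\ldots,n\}$. Consider the incidence variety
\begin{equation*}
\Phi \,\,=\,\, \bigl\{\, (Q,P) \in \mathcal{V} \times {\rm Gr}(m,n) \,:\, Q \subseteq P \,\bigr\}.
\end{equation*}
The first projection $\pi_1 \colon \Phi \to \mathcal{V}$ is a fiber bundle: over each $Q \in \mathcal{V}$, the fiber is the set of $m$-dimensional subspaces of $\CC^n$ containing the fixed $k$-dimensional space $Q$, which is exactly ${\rm Gr}(m-k, n-k)$. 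Since $\mathcal{V}$ is irreducible and the fibers are irreducible of constant dimension, $\Phi$ is irreducible. Then $\mathcal{CL}_\mathcal{V} = \pi_2(\Phi)$ is the image of an irreducible variety under the second projection, hence irreducible; taking closure is harmless since the image is already closed (or one simply replaces $\mathcal{CL}_\mathcal{V}$ by its closure, which the paper implicitly does when speaking of a subvariety defined by an equation).

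For properness, I would compute $\dim \Phi$ and compare with $\dim {\rm Gr}(m,n) = m(n-m)$. From the bundle structure,
\begin{equation*}
\dim \Phi \,=\, \dim \mathcal{V} + \dim {\rm Gr}(m-k,n-k) \,=\, \bigl(k(r-k)-1\bigr) + (m-k)(n-m).
\end{equation*}
Substituting $m-k = n-r$ and $n-m = r-k$ gives $\dim \Phi = k(r-k) - 1 + (n-r)(r-k) = n(r-k) - (r-k)^2 - 1 = (r-k)(n-r+k) - 1 = m(n-m) - 1$ after the substitution $n-r+k = m$ and $r-k = n-m$. Thus $\dim \Phi = m(n-m) - 1 = \dim {\rm Gr}(m,n) - 1$. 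Since $\dim \mathcal{CL}_\mathcal{V} \le \dim \Phi < \dim {\rm Gr}(m,n)$ and ${\rm Gr}(m,n)$ is irreducible, $\mathcal{CL}_\mathcal{V}$ is a proper subvariety. This also foreshadows the ``codimension one expected'' remark: the generic fiber of $\pi_2$ is finite precisely when $\mathcal{CL}_\mathcal{V}$ has the expected dimension $m(n-m)-1$.

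The one genuine subtlety to handle carefully is the claim that $\pi_2(\Phi)$ is closed. Since ${\rm Gr}(m,n)$ is projective (hence complete) and $\Phi$ is a closed subvariety of the projective variety $\mathcal{V} \times {\rm Gr}(m,n)$, the projection $\pi_2$ is a proper (closed) map, so $\pi_2(\Phi)$ is Zariski closed; this is where completeness of the Grassmannian does the work. Everything else — the bundle structure of $\pi_1$, irreducibility passing through fiber bundles with irreducible base and fiber, and image of irreducible being irreducible — is standard. I would present the argument in the order: (i) define $\Phi$ and identify it as closed in $\mathcal{V} \times {\rm Gr}(m,n)$; (ii) show $\pi_1$ is a ${\rm Gr}(n-r, n-k)$-bundle and conclude $\Phi$ irreducible with the dimension formula above; (iii) invoke completeness so $\mathcal{CL}_\mathcal{V} = \pi_2(\Phi)$ is a closed irreducible subvariety; (iv) conclude properness from $\dim \mathcal{CL}_\mathcal{V} \le \dim{\rm Gr}(m,n) - 1$. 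I expect step (ii), specifically keeping the index bookkeeping $m = k+n-r$ straight, to be the only place requiring care, and it is purely arithmetic.
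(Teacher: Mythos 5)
Your proposal is correct and follows essentially the same route as the paper: both construct the incidence variety of pairs $(Q,P)$ with $Q\subseteq P$, observe that it fibers over $\mathcal{V}$ with fiber ${\rm Gr}(n-r,n-k)$, deduce irreducibility, and use the dimension count $k(r-k)-1+(n-r)(r-k)=\dim{\rm Gr}(k+n-r,n)-1$ to conclude properness. Your write-up merely makes explicit some steps the paper leaves implicit (closedness of the image via completeness, and the fiber-bundle argument for irreducibility of the incidence variety).
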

 
 \begin{proof}
 We argue that
  $\mathcal{CL}_\mathcal{V}$ is expected to be a hypersurface
in ${\rm Gr}(k+n-r,n)$. 
There are $k(r-k)-1$ degrees of freedom in
fixing a point $Q \in \mathcal{V}$. 
After fixing $Q$, we choose $P$. The variety of all 
subspaces $P \in {\rm Gr}(k+n-r,n)$
that contain $Q$ is a Grassmannian ${\rm Gr}(n-r,n-k)$.
So, there are
$(n-r)(r-k) = {\rm dim}({\rm Gr}(n-r,n-k))$ degrees of freedom for choosing $P$.
Our construction parametrizes an irreducible incidence variety whose dimension is
the sum
\begin{equation}
\label{eq:dof}
 k(r-k)- 1 \, \,+ \,\, (n-r)(r-k) \,\, = \,\, (r-k)(k+n-r) -1 \,\,= \,\, {\rm dim}({\rm Gr}(k+n-r,n)) - 1. \end{equation}
 The incidence variety parametrizes all pairs $(Q,P)$ as above.
  It is irreducible and maps onto the Chow-Lam locus $\mathcal{CL}_\mathcal{V}$.
 This shows that $\mathcal{CL}_\mathcal{V}$ is irreducible of dimension at most (\ref{eq:dof}).
  \end{proof}

 The {\em Chow-Lam form} of $\mathcal{V}$ is the
 polynomial ${\rm CL}_\mathcal{V} $ that defines the Chow-Lam locus
 $\mathcal{CL}_\mathcal{V}$, provided this has codimension one.
 Otherwise, $\mathcal{V}$ is  {\em degenerate} and we set  ${\rm CL}_\mathcal{V} = 1$.
 This definition is analogous to the definition of a degenerate dual variety and discriminant (cf.~\cite{GKZ}).
 For any projective variety, the dual variety is expected to be a hypersurface,
 and its defining polynomial is the discriminant. However, it can happen
 that the dual variety has codimension $\geq 2$, in which case the discriminant
 is $1$. We note that ${\rm CL}_\mathcal{V}$ is unique 
 up to scaling and modulo the Pl\"ucker relations for ${\rm Gr}(k+n-r,n)$.
We can write it either in Pl\"ucker coordinates
(primal or dual) or in Stiefel coordinates (primal or~dual). 

\begin{remark}
In our definition of the Chow-Lam locus it is assumed
that the variety $\mathcal{V}$ has dimension $k(r-k)-1$.
This is, of course, a  restrictive hypothesis.
Our rationale for this is
that we would like $\mathcal{CL}_\mathcal{V}$ to have
codimension one. Giving up this hypothesis would
take us to the general setting
of higher Chow-Lam loci, which concerns
tangencies between $\mathcal{V}$
and arbitrary subGrassmannians ${\rm Gr}(k,L)$ of ${\rm Gr}(k,n)$.
This is discussed at the end of Section~\ref{sec5}.
\end{remark}

The Chow-Lam form is named for Thomas Lam, who studies universal projections of positroid varieties in 
\cite[Chapter 18]{LamCurrent}. We will establish the relationship between universal projections and the Chow-Lam form in Proposition \ref{prop:chowlamprojection}. Lam also computes cohomology classes of projections in 
\cite[Proposition 3.5]{LamStanley}. This is fundamental for  Theorem \ref{thm:CL} below.

We next explore the issue of degeneracy for 
Schubert varieties in a small Grassmannian.

\begin{example}[$k=2,n=5,r=4$] \label{ex:254}
Let $\mathcal{V}$ be a variety of dimension $3$ in the
$6$-dimensional Grassmannian ${\rm Gr}(2,5)$.
Here, $k+n-r=3$, so the Chow-Lam locus $\mathcal{CL}_\mathcal{V}$ is
a subvariety of ${\rm Gr}(3,5)$. We shall use
dual Pl\"ucker coordinates $q$ on ${\rm Gr}(2,5)$
and primal Pl\"ucker coordinates $p$ on ${\rm Gr}(3,5)$.
The inclusion $Q  \subset P$ is expressed algebraically by the matrix equation
	\begin{equation} \label{eq:twoskewsymmetric}
\begin{bmatrix}
			0 & p_{12} & p_{13} & p_{14} & p_{15}\\
			-p_{12} & 0 & p_{23} & p_{24} & p_{25}\\
			-p_{13} & -p_{23} & 0 & p_{34} & p_{35}\\
			-p_{14} & -p_{24} & -p_{34} & 0 & p_{45}\\
			-p_{15} & -p_{25} & -p_{35} & -p_{45} & 0
		\end{bmatrix} \cdot 
		\begin{bmatrix}
			0 & q_{12} & q_{13} & q_{14} & q_{15}\\
			-q_{12} & 0 & q_{23} & q_{24} & q_{25}\\
			-q_{13} & -q_{23} & 0 & q_{34} & q_{35}\\
			-q_{14} & -q_{24} & -q_{34} & 0 & q_{45}\\
			-q_{15} & -q_{25} & -q_{35} & -q_{45} & 0
		\end{bmatrix} \,\,=\,\, 0.
	\end{equation}
This is a system of $25$ bilinear equations in $(p,q)$.
We augment this
by the equations in $q$ that define $\mathcal{V}$, we 
saturate with respect to the $q$-variables, and we finally eliminate all $q$-variables.
The resulting ideal in the ten $p$-variables defines the Chow-Lam locus 
  $\mathcal{CL}_\mathcal{V} \subset {\rm Gr}(3,5)$.
  
  First suppose that $\mathcal{V}$ is defined by three
  general linear forms in $p$. Then $\mathcal{CL}_\mathcal{V}$
  has codimension one, and the Chow-Lam ${\rm CL}_\mathcal{V}$ form has degree two.
  For a concrete example, 
  take  $\mathcal{V} = V(q_{12}+q_{13}, q_{24}+q_{25}, q_{23}+q_{35})$. 
  The threefold $\mathcal{V}$ has degree five in  Pl\"ucker space $\PP^9$. It
  parametrizes all lines in $\PP^4$ that meet three given planes in $\PP^4$.
  The algorithm above yields
  $$ {\rm CL}_\mathcal{V} \,\, = \,\,
  p_{14}\,(p_{24}-p_{25}-p_{34}+p_{35})\,   + \,(p_{12}-p_{14}+p_{15})\,p_{45}.
  $$
  Next, we consider the two Schubert varieties of dimension $3$ in ${\rm Gr}(2,5)$. They are
  $$ \mathcal{S}_{24} \,\, = \,\, V(q_{12},q_{13},q_{14},q_{15},q_{23}) \quad {\rm and} \quad
  \mathcal{S}_{15} \,\, = \,\, V(q_{12},q_{13},q_{14},q_{23},q_{24},q_{34}). $$
  The first Schubert threefold is non-degenerate and its Chow-Lam form equals
  ${\rm CL}_{\mathcal{S}_{24}} = p_{45}$.
  The second Schubert threefold $\mathcal{S}_{15}$ is
  found to be degenerate. Our algorithm reveals that
  $$ \mathcal{CL}_{\mathcal{S}_{15}} \,\, = \,\, V(p_{15},p_{25},p_{35}, p_{45}). $$
  This Chow-Lam locus   has codimension two in ${\rm Gr}(3,5)$,
  and hence ${\rm CL}_{\mathcal{S}_{24}} = 1$.
\end{example}

We now come to the first result of this section. It will explain
the findings in Example~\ref{ex:254}.
Let $I = \{r-k, r-k+2, r-k+3, ...,r\}$. This is the
unique index set $I = \{i_1,\ldots,i_k\}$ of the correct codimension which satisfies $i_k = r$.
We set $\lambda(\mathcal{V}) := \delta_I(\mathcal{V})$, and we call this integer
 the {\em Chow-Lam degree}
of the given variety $\mathcal{V}$. Note that
the Chow-Lam degrees of the  three varieties
in Example \ref{ex:254} are
$\lambda(\mathcal{V}) = 2$,
$\lambda(\mathcal{S}_{24}) = 1$, and
$\lambda(\mathcal{S}_{15}) = 0$.

\begin{theorem} \label{thm:CL}
Let $\mathcal{V}$  be a subvariety of dimension $k(r-k)-1$
in the Grassmannian ${\rm Gr}(k,n)$.
The Chow-Lam form ${\rm CL}_\mathcal{V}$ is a polynomial of
degree $\lambda(\mathcal{V})$ in the Pl\"ucker coordinates on ${\rm Gr}(k{+}n{-}r,n)$.
In particular, $\mathcal{V}$ is degenerate if and only if 
its Chow-Lam degree $\lambda(\mathcal{V}) $ is~zero.
\end{theorem}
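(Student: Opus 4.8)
The plan is to reduce the statement to a statement about intersection numbers of Schubert classes, exactly in the spirit of Corollary~\ref{cor:chowdegree} but now inside ${\rm Gr}(k{+}n{-}r,n)$ rather than $\PP^{n-1}$. The key point is that the degree of the hypersurface $\mathcal{CL}_\mathcal{V}$ in Pl\"ucker coordinates equals the intersection number of its cohomology class $[\mathcal{CL}_\mathcal{V}]$ with the class of a generic line in the Pl\"ucker embedding of ${\rm Gr}(k{+}n{-}r,n)$. Such a line is a generic $\PP^1$ in the Grassmannian, and the class of a generic line equals the Schubert class $[\mathcal{S}_{\{1,\ldots,k{+}n{-}r{-}1,k{+}n{-}r{+}1\}}]$ dual (in the ambient projective space) to the hyperplane class; equivalently, the degree of a hypersurface $\mathcal{H} \subset {\rm Gr}(m,N)$ is the coefficient of the "next-to-bottom" Schubert class $[\mathcal{S}_{\{1,2,\ldots,m{-}1,m{+}1\}}]$ when we write $[\mathcal{H}] = (\deg \mathcal{H}) \cdot \sigma_1$ in terms of the special Schubert class $\sigma_1$. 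So I would first record: $\deg {\rm CL}_\mathcal{V} = \int_{{\rm Gr}(k{+}n{-}r,n)} [\mathcal{CL}_\mathcal{V}] \cdot \sigma_1^{\dim \mathcal{CL}_\mathcal{V}}$ when $\mathcal{CL}_\mathcal{V}$ is a hypersurface, which by Lemma~\ref{lem:CLproper} it is precisely when it is not degenerate.

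Next I would identify $[\mathcal{CL}_\mathcal{V}]$. The Chow-Lam locus is the image of the incidence variety $\{(Q,P) : Q \in \mathcal{V},\ Q \subseteq P\}$ under the projection to ${\rm Gr}(k{+}n{-}r,n)$, and this is a birational (degree-one) map onto its image since a generic $P \in \mathcal{CL}_\mathcal{V}$ contains a unique $Q \in \mathcal{V}$ (the fiber dimension count in (\ref{eq:dof}) is tight for generic $P$). This is exactly the "taking a subspace to the sub-Grassmannians through it" construction whose pushforward on cohomology is computed by Lam in \cite[Proposition 3.5]{LamStanley}: the class $[\mathcal{CL}_\mathcal{V}]$ is obtained from $[\mathcal{V}] = \sum_J \delta_J(\mathcal{V})[\mathcal{S}_J]$ by applying Lam's explicit formula for the cohomology class of the projection. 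I would then pair the result with $\sigma_1^{\dim}$ and use the Pieri rule: the pairing picks out exactly one Schubert coefficient of $[\mathcal{V}]$, namely the one indexed by the $I$ defined just before the theorem, $I = \{r{-}k, r{-}k{+}2, \ldots, r\}$. Intuitively: intersecting $\mathcal{CL}_\mathcal{V}$ with a generic line $\PP^1$ of $P$'s amounts to asking which $Q \in \mathcal{V}$ lie in some $P$ along that line, and a generic such pencil of $(k{+}n{-}r)$-planes sweeps out exactly the Schubert condition of type $I$ on the $Q$'s; so the count is $\delta_I(\mathcal{V}) = \lambda(\mathcal{V})$.

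Concretely, the cleanest route to the pairing is the projection formula for the incidence correspondence together with the known Chow ring structure. Let $\pi_\mathcal{V}$ and $\pi_P$ be the two projections from the flag incidence variety $\mathrm{Fl} = \{(Q,P): Q \subseteq P\} \subset {\rm Gr}(k,n) \times {\rm Gr}(k{+}n{-}r,n)$. Then $[\mathcal{CL}_\mathcal{V}] = (\pi_P)_* \pi_\mathcal{V}^* [\mathcal{V}]$ (using that the restricted map to $\mathcal{CL}_\mathcal{V}$ has degree one), and hence $\deg {\rm CL}_\mathcal{V} = \int_{\mathrm{Fl}} \pi_\mathcal{V}^*[\mathcal{V}] \cdot \pi_P^* \sigma_1^{\dim}$. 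The class $\pi_P^*\sigma_1^{\dim \mathcal{CL}_\mathcal{V}}$, pushed forward along $\pi_\mathcal{V}$, is a single Schubert class on ${\rm Gr}(k,n)$ — it is the class of those $Q$ lying in some member of a generic pencil of $(k{+}n{-}r)$-planes — and a direct dimension-and-incidence computation shows this class is $[\mathcal{S}_{I^c}]$, with $I^c$ complementary to the $I$ in the theorem statement. Since $\{[\mathcal{S}_J]\}$ and $\{[\mathcal{S}_{J^c}]\}$ are dual bases under the intersection pairing on ${\rm Gr}(k,n)$, we get $\deg {\rm CL}_\mathcal{V} = \delta_I(\mathcal{V}) = \lambda(\mathcal{V})$. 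The final "in particular" is then immediate: a nonzero-degree polynomial cuts out a genuine hypersurface (codimension one), so $\mathcal{V}$ is non-degenerate, while $\lambda(\mathcal{V}) = 0$ forces ${\rm CL}_\mathcal{V}$ to be a constant, i.e. $\mathcal{V}$ is degenerate and ${\rm CL}_\mathcal{V} = 1$ by definition.

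I expect the main obstacle to be the clean identification of the pushforward class $(\pi_\mathcal{V})_* \pi_P^* \sigma_1^{\dim}$ as exactly $[\mathcal{S}_{I^c}]$ (equivalently, verifying that Lam's projection formula from \cite[Proposition 3.5]{LamStanley}, paired against $\sigma_1$, isolates precisely the coefficient $\delta_I$ with $I$ as defined). This requires being careful with the Schubert conventions: which flag, which partial order, and the precise combinatorics of the Pieri rule on ${\rm Gr}(k{+}n{-}r,n)$, tracking how the "$i_k = r$" normalization of $I$ arises. The birationality of the incidence map onto $\mathcal{CL}_\mathcal{V}$ (needed so that no multiplicity factor enters) should follow from the tightness of the count (\ref{eq:dof}) for generic $P$, but it is worth stating explicitly — and it is exactly the place where the hypothesis $\dim \mathcal{V} = k(r{-}k){-}1$ is used. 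Everything else is bookkeeping with the cohomology ring and the standard fact that the degree of a Grassmannian hypersurface is its pairing with $\sigma_1^{\dim}$.
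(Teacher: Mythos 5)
Your route is genuinely different from the paper's. The paper proves Theorem \ref{thm:CL} by slicing: Lemma \ref{lem:chowlamintersec} and Corollaries \ref{cor:godown1}--\ref{cor:godown2} show that cutting $\mathcal{V}$ with a generic relative Grassmannian ${\rm Gr}(k,L)$ changes neither the degree of the Chow--Lam form nor $\lambda(\mathcal{V})$, and induction on $n-r$ reduces everything to the base case $n=r$, where $\mathcal{CL}_\mathcal{V}=\mathcal{V}$ is itself a hypersurface and the claim is just that the hyperplane class of the Pl\"ucker embedding is the Schubert class $[\mathcal{S}_I]$. You instead work directly on the incidence correspondence and compute $[\mathcal{CL}_\mathcal{V}]$ by pushforward, in the spirit of \cite[Proposition 3.5]{LamStanley}. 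Your identification of $(\pi_\mathcal{V})_*\pi_P^*(\text{line class})$ with $[\mathcal{S}_{I^c}]$ is correct and checkable (a pencil $\{P: A\subset P\subset B\}$ sweeps out exactly the Schubert condition $\dim(Q\cap A)\ge k-1$, $Q\subseteq B$, whose complementary index set is the $I$ with $i_k=r$), so the skeleton is sound. But two steps are not right as written.

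First, the degree formula. You write $\deg {\rm CL}_\mathcal{V} = \int [\mathcal{CL}_\mathcal{V}]\cdot \sigma_1^{\dim \mathcal{CL}_\mathcal{V}}$ and then treat $\sigma_1^{\dim}$ as the class of a pencil. These are not the same: $\sigma_1^{\dim}$ equals $\deg\bigl({\rm Gr}(k{+}n{-}r,n)\bigr)$ times the one-dimensional Schubert class, so pairing $[\mathcal{CL}_\mathcal{V}]=(\deg {\rm CL}_\mathcal{V})\,\sigma_1$ against it returns $\deg({\rm Gr})\cdot \deg {\rm CL}_\mathcal{V}$. The argument goes through only if you pair against the Schubert line class $[\mathcal{S}_{\{1,\dots,k+n-r-1,\,k+n-r+1\}}]$ itself, which is the ``generic pencil'' you describe geometrically; the exponent-$\dim$ formulation must be dropped. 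Second, and more seriously, the birationality of $\pi_P$ onto $\mathcal{CL}_\mathcal{V}$ does not follow from the tightness of the count (\ref{eq:dof}): tightness only shows the generic fiber is finite, not that it is a single point, and a generic fiber of cardinality $m\ge 2$ would give $\lambda(\mathcal{V})=m\cdot\deg{\rm CL}_\mathcal{V}$. To close this you must bound the locus of $P$ containing two distinct $Q_1,Q_2\in\mathcal{V}$; the stratum where $Q_1,Q_2$ are in general position has codimension $2$ in ${\rm Gr}(k{+}n{-}r,n)$ by a count like (\ref{eq:dof}), but the strata where $\dim(Q_1\cap Q_2)$ is large gain $j(r-k)$ in fiber dimension and need a separate estimate on the dimension of such pairs in $\mathcal{V}\times\mathcal{V}$. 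This is exactly the kind of issue the paper's inductive reduction avoids entirely, since in its base case the map $Q\mapsto P$ is the identity. (Your deduction of the ``in particular'' statement is fine: if $\mathcal{CL}_\mathcal{V}$ has codimension $\ge 2$ the pushforward vanishes, forcing $\lambda(\mathcal{V})=0$, and conversely a hypersurface forces $\lambda(\mathcal{V})=m\deg{\rm CL}_\mathcal{V}\ge 1$; neither direction needs $m=1$.)
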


This formula for the degree of ${\rm CL}_\mathcal{V}$ generalizes 
Corollary \ref{cor:chowdegree}. 
Indeed, if $k=1$ then our variety $\mathcal{V}$ has dimension
$d = r-2$ in $\PP^{n-1} = {\rm Gr}(1,n)$, and its cohomology class  (\ref{eq:schubert}) is
$$ [\mathcal{V}] \, = \,{\rm degree}(\mathcal{V})  \cdot [\mathcal{S}_{\{n-d\}}]. $$
Here, the Chow-Lam degree is just the classical degree, i.e.~$\,\lambda(\mathcal{V}) = {\rm degree}(\mathcal{V})$. 
The argument that led to Corollary \ref{cor:chowdegree}
can be generalized to $k \geq 2$. Our proof of Theorem \ref{thm:CL}
will be based on this. We begin with the generalization of the
intersection formula  in Proposition~\ref{prop:chowintersec}.

For a subspace $L$ of dimension $\ell$ in $\CC^n$, we introduce the relative Grassmannian
$$ {\rm Gr}(k,L) \,\, = \,\, \bigl\{ P \in {\rm Gr}(k,n) \,|\, P \subseteq L \,\bigr\}
\quad \simeq \,\,\, {\rm Gr}(k,\ell). $$
With this notation, the definition of the Chow-Lam locus can be rewritten as follows:
\begin{equation}
\label{eq:CLdef2} \mathcal{CL}_\mathcal{V} \,\, = \,\, 
\bigl\{ \, L \in {\rm Gr}(k+n-r,n) \,: \, \mathcal{V} \,\cap \, {\rm Gr}(k,L) \,\not= \,\emptyset \, \bigr\}. 
\end{equation}

\begin{lemma}[Intersection Formula] \label{lem:chowlamintersec}
Let $L$ and $M$ be linear subspaces of $\PP^{n-1}$
such that ${\rm codim}(L \cap M) =
{\rm codim}(L) + {\rm codim}(M) = r-k$. Then
\begin{equation}
\label{eq:VLM}
{\rm CL}_{\mathcal{V}\, \cap \,{\rm Gr}(k,L)} (M) \,\, = \,\, {\rm CL}_{\mathcal{V}}(L \cap M). 
\end{equation}
\end{lemma}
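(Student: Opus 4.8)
The plan is to mimic the proof of the classical Intersection Formula (Proposition \ref{prop:chowintersec}), working with the reformulation of the Chow-Lam locus given in (\ref{eq:CLdef2}). The key observation is that intersecting the ambient space with $L$ and then with $M$ is the same as intersecting once with $L\cap M$, and this must be transported to the level of relative Grassmannians. Concretely, for a subspace $N\subseteq L$ one has ${\rm Gr}(k,N) = {\rm Gr}(k,L)\cap {\rm Gr}(k,N)$ inside ${\rm Gr}(k,n)$, and more importantly ${\rm Gr}(k,L\cap M) = {\rm Gr}(k,L)\cap{\rm Gr}(k,M)$. So the left-hand variety $\mathcal{V}\cap{\rm Gr}(k,L)$ lives naturally inside ${\rm Gr}(k,L)\simeq {\rm Gr}(k,\ell)$, and its own Chow-Lam locus is a subvariety of a Grassmannian of subspaces of $L$; evaluating ${\rm CL}_{\mathcal{V}\cap{\rm Gr}(k,L)}$ at $M$ should be interpreted as evaluating at $L\cap M$ viewed inside $L$.

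The steps I would carry out, in order: (1) Set up the dimension bookkeeping. Since ${\rm codim}(L)+{\rm codim}(M) = r-k$, write ${\rm codim}(L) = n-\ell$ and check that $\mathcal{V}\cap{\rm Gr}(k,L)$, living in ${\rm Gr}(k,\ell)$, again has the ``correct'' dimension $k(r'-k)-1$ relative to $L$ for the appropriate $r'$ (one expects $r' = r - {\rm codim}(L)$), so that its Chow-Lam form is defined. (2) Identify the two Chow-Lam loci set-theoretically. A subspace $P'\subseteq M$ of the relevant dimension lies in $\mathcal{CL}_{\mathcal{V}\cap{\rm Gr}(k,L)}$ (computed inside $L$, so really we look at $P'\cap L$... ) iff there exists $Q\in\mathcal{V}$ with $Q\subseteq{\rm Gr}(k,L)$ and $Q\subseteq P'\cap L$; unwinding, this says $Q\subseteq P'$ and $Q\in\mathcal{V}$, i.e. $L\cap M\in\mathcal{CL}_\mathcal{V}$. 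So the vanishing locus of ${\rm CL}_{\mathcal{V}\cap{\rm Gr}(k,L)}(M)$ as $M$ varies matches the vanishing locus of ${\rm CL}_\mathcal{V}(L\cap M)$. (3) Upgrade the set-theoretic equality to an equality of polynomials. Since both sides are defined up to scalar by irreducible hypersurfaces (using Lemma \ref{lem:CLproper} and the non-degeneracy hypothesis implicit in both forms being nontrivial), it suffices that the two hypersurfaces coincide and that the multiplicities agree; for the latter, invoke the same generic-reducedness argument as in the proof of Proposition \ref{prop:chowintersec} in \cite{DS}, transported via the linear isomorphism ${\rm Gr}(k,L)\simeq{\rm Gr}(k,\ell)$. (4) Fix the scalar by evaluating both sides at a convenient generic $M$ where $\mathcal{V}\cap{\rm Gr}(k,L)\cap{\rm Gr}(k,M)$ is a single reduced point.

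The main obstacle I anticipate is Step (3): matching the two polynomials on the nose rather than just their zero sets, in particular pinning down the degree compatibility. One must check that the Chow-Lam degree $\lambda(\mathcal{V}\cap{\rm Gr}(k,L))$, computed in the smaller Grassmannian ${\rm Gr}(k,\ell)$, equals $\lambda(\mathcal{V})$; this is the analogue of the statement, used in the proof of Corollary \ref{cor:chowdegree}, that $\deg(\mathcal{V}\cap L) = \deg(\mathcal{V})$ for a generic linear section. Here it should follow from the behavior of Schubert classes under the inclusion ${\rm Gr}(k,\ell)\hookrightarrow{\rm Gr}(k,n)$ together with Lam's cohomology computation \cite[Proposition 3.5]{LamStanley}, but making this precise — identifying which Schubert coefficient of $[\mathcal{V}\cap{\rm Gr}(k,L)]$ is relevant and showing it equals $\delta_I(\mathcal{V})$ — is the delicate point. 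Once the degrees match and the zero sets match, the equality of the two irreducible forms up to the (then harmless) scalar fixed in Step (4) is immediate.
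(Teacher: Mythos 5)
Your Step (2) --- combining the reformulation (\ref{eq:CLdef2}) with the identity ${\rm Gr}(k,L\cap M)={\rm Gr}(k,L)\cap{\rm Gr}(k,M)$ --- is exactly the paper's proof, which consists of that single observation. The normalization issues you flag in Steps (3)--(4) are not needed for the lemma as stated, since the Chow-Lam form is only defined up to scaling and modulo Pl\"ucker relations (so the identity is one of hypersurfaces), and the degree comparison $\lambda(\mathcal{V})=\lambda(\mathcal{V}\cap{\rm Gr}(k,L))$ that you single out as the delicate point is treated separately in the paper as Corollary \ref{cor:godown2}, via a standard Schubert-calculus lemma, rather than as part of this proof.
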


\begin{proof} This follows from (\ref{eq:CLdef2}) and the identity
${\rm Gr}(k,L \cap M) \, = \, {\rm Gr}(k,L) \, \cap \, {\rm Gr}(k,M)$.
\end{proof}

\begin{corollary} \label{cor:godown1}
The Chow-Lam forms ${\rm CL}_\mathcal{V}$ and
${\rm CL}_{\mathcal{V}\, \cap \,{\rm Gr}(k,L)} $ have the same
degree, when written in their respective Pl\"ucker coordinates. 
\end{corollary}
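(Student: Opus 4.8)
The plan is to mimic the proof of Corollary \ref{cor:chowdegree}, using the Intersection Formula of Lemma \ref{lem:chowlamintersec} in place of Proposition \ref{prop:chowintersec}. Fix $L$ and $M$ as in Lemma \ref{lem:chowlamintersec}, so that $\operatorname{codim}(L\cap M) = \operatorname{codim}(L) + \operatorname{codim}(M) = r-k$ inside $\PP^{n-1}$. The identity (\ref{eq:VLM}) says that the polynomial ${\rm CL}_{\mathcal{V}\cap{\rm Gr}(k,L)}$, evaluated on the Pl\"ucker vector of $M$, equals ${\rm CL}_{\mathcal V}$ evaluated on the Pl\"ucker vector of $L\cap M$. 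Now the Pl\"ucker coordinates of $L\cap M$ are obtained from those of $M$ by the exterior product with the fixed Pl\"ucker vector of $L$, exactly as in (\ref{ex:exterior}); this is a \emph{linear} substitution in the coordinates of $M$. Hence the left-hand side is the result of a linear change of coordinates applied to ${\rm CL}_{\mathcal V}$, which preserves degree. Reading off degrees on both sides of (\ref{eq:VLM}) gives the claim, provided the linear substitution is not identically degenerate on the variety in question and provided neither side has collapsed to the constant $1$.

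The first step, then, is to record precisely the dimension bookkeeping: $\mathcal{V}$ has dimension $k(r-k)-1$, and ${\rm Gr}(k,L)$ is a sub-Grassmannian ${\rm Gr}(k,\ell)$ of the appropriate $\ell$ so that $\mathcal{V}\cap{\rm Gr}(k,L)$ again has dimension of the form $k(r'-k)-1$; one checks that the relevant $r'$ is such that $\mathcal{CL}_{\mathcal{V}\cap{\rm Gr}(k,L)}$ lives in the Grassmannian ${\rm Gr}(k+n-r,\cdot)$ with the \emph{same} parameter $k+n-r$, so that the two Chow-Lam forms are polynomials in Pl\"ucker coordinates on Grassmannians of the same ``shape.'' This is the content that makes the phrase ``written in their respective Pl\"ucker coordinates'' meaningful. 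The second step is to argue that the substitution (\ref{ex:exterior}) sending the Pl\"ucker vector of $M$ to that of $L\cap M$ is a surjective linear map onto the relevant Pl\"ucker space for a generic fixed $L$, so that a polynomial pulled back through it has the same degree as the original; equivalently, one observes that as $M$ ranges over all subspaces of its given dimension, $L\cap M$ ranges over all subspaces of ${\rm Gr}(k+n-r,L)$ meeting the genericity condition, and the wedge map is linear and dominant.

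The main obstacle is the degenerate case: if $\mathcal{V}$ is degenerate, then ${\rm CL}_{\mathcal V}=1$ by convention, and one must check that $\mathcal{V}\cap{\rm Gr}(k,L)$ is then also degenerate, so that ${\rm CL}_{\mathcal{V}\cap{\rm Gr}(k,L)}=1$ as well and the ``equal degree'' statement holds trivially with both degrees zero. Conversely, if $\mathcal{V}$ is non-degenerate one needs that the intersection with a generic ${\rm Gr}(k,L)$ remains non-degenerate, i.e.\ that $\mathcal{CL}_{\mathcal{V}\cap{\rm Gr}(k,L)}$ does not jump in codimension. The cleanest route is to invoke Theorem \ref{thm:CL}, which is being proved in tandem: the Chow-Lam degree is the Schubert coefficient $\lambda(\mathcal V)=\delta_I(\mathcal V)$, and Lemma \ref{lem:chowlamintersec} shows that intersecting with a generic ${\rm Gr}(k,L)$ is a Schubert-type operation that preserves this particular coefficient; this is precisely the cohomology-class computation of Lam \cite[Proposition 3.5]{LamStanley} referenced before Theorem \ref{thm:CL}. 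So the logical order I would follow is: establish Lemma \ref{lem:chowlamintersec} (done), then prove this corollary by the linear-substitution argument just described for the non-degenerate case, handle degeneracy via the Schubert-coefficient invariance, and use the corollary as the inductive step in the proof of Theorem \ref{thm:CL} itself (reducing the general $k(r-k)-1$-dimensional case down to the classical case $k=1$ covered by Corollary \ref{cor:chowdegree}).
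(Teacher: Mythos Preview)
Your core argument is correct and matches the paper's (implicit) approach: the corollary is stated without proof right after Lemma \ref{lem:chowlamintersec}, and the intended reasoning is exactly the linear-substitution observation you give---the Pl\"ucker coordinates of $L\cap M$ are linear in those of $M$ via the exterior product (\ref{ex:exterior}), so (\ref{eq:VLM}) forces equal degrees. Your extra care about surjectivity of the wedge map and the degenerate case is more than the paper supplies, and your proposed route through the Schubert-coefficient invariance (Corollary \ref{cor:godown2}) rather than Theorem \ref{thm:CL} correctly avoids circularity.

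One tangential correction: in your final sentence you describe the induction in Theorem \ref{thm:CL} as reducing to the classical case $k=1$. That is not how the paper proceeds. The induction is on $n-r$, and the base case is $n=r$, where $\mathcal{V}=\mathcal{CL}_\mathcal{V}$ is already a hypersurface in ${\rm Gr}(k,n)$ and the statement is immediate from the identification of $[\mathcal{S}_I]$ with the hyperplane class. The parameter $k$ never changes. This does not affect your proof of the corollary itself, but it is worth getting straight since you frame the corollary as the inductive step.
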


The following lemma is based on a standard argument in Schubert calculus.

\begin{lemma}
Let $L$ be a general subspace of codimension $\rho$ in $\CC^n$.
The  intersection $ \mathcal{S}_I \cap {\rm Gr}(k,L) $ is non-empty
if and only if $i_1 > \rho$. In this case, its class is a Schubert class, namely
$$ [\mathcal{S}_I \cap {\rm Gr}(k,L)] \,\, = \,\,
[\mathcal{S}_{I - (\rho,\rho,\ldots,\rho)}]. $$
\end{lemma}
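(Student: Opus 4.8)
The plan is to reduce the statement to the classical theory of intersecting a Schubert variety with a generic linear subspace, viewing $\mathrm{Gr}(k,L)$ as a sub-Grassmannian sitting inside $\mathrm{Gr}(k,n)$. Concretely, fix a complete flag $\mathcal{F}: 0 \subset F_1 \subset F_2 \subset \cdots \subset F_n = \CC^n$ defining the Schubert variety $\mathcal{S}_I$, so that $\mathcal{S}_I = \{\,P \in \mathrm{Gr}(k,n) : \dim(P \cap F_{i_s}) \geq s \text{ for all } s\,\}$. The key observation is that a general subspace $L$ of codimension $\rho$ can be taken to be in general position with respect to $\mathcal{F}$, which means concretely that $\dim(L \cap F_j) = \max(0, j - \rho)$ for all $j$. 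Then the induced flag on $L$ is $\mathcal{G}: 0 \subset (L \cap F_{\rho+1}) \subset (L \cap F_{\rho+2}) \subset \cdots \subset L$, where $\dim(L \cap F_{\rho+t}) = t$. Under the identification $\mathrm{Gr}(k,L) \simeq \mathrm{Gr}(k,\ell)$ with $\ell = n - \rho$, this flag $\mathcal{G}$ is a complete flag on $L$.

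First I would establish the non-emptiness criterion. A point $P \in \mathcal{S}_I \cap \mathrm{Gr}(k,L)$ must satisfy $P \subseteq L$ and $\dim(P \cap F_{i_1}) \geq 1$, hence $P \cap L \cap F_{i_1} \neq 0$, which forces $L \cap F_{i_1} \neq 0$, i.e. $i_1 > \rho$. Conversely, if $i_1 > \rho$ then each $i_s > \rho$ (since the $i_s$ are increasing), so the subspaces $L \cap F_{i_s}$ have dimension $i_s - \rho \geq s$, and one can exhibit a point in the intersection by a direct construction (e.g. pick $P$ adapted to the flag $\mathcal{G}$). Second, and this is the substantive part, I would identify the intersection $\mathcal{S}_I \cap \mathrm{Gr}(k,L)$ as a Schubert variety inside $\mathrm{Gr}(k,L)$. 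The condition $\dim(P \cap F_{i_s}) \geq s$ for $P \subseteq L$ is equivalent to $\dim(P \cap (L \cap F_{i_s})) \geq s$; and since $L \cap F_{i_s}$ is the $(i_s - \rho)$-th step of the complete flag $\mathcal{G}$ on $L$, this says exactly that $P$ lies in the Schubert variety of $\mathrm{Gr}(k,L)$ associated to the flag $\mathcal{G}$ and the index set $\{i_1 - \rho, i_2 - \rho, \ldots, i_k - \rho\} = I - (\rho,\ldots,\rho)$. The one point needing care here is that intermediate flag steps $F_j$ with $j$ not among the $i_s$ impose no condition on $\mathcal{S}_I$, and one must check that restricting the flag $\mathcal{F}$ to $L$ and then forgetting the non-essential steps recovers precisely the combinatorial data $I - (\rho,\ldots,\rho)$ relative to $\mathcal{G}$; this is routine once the dimension count $\dim(L \cap F_{\rho + t}) = t$ is in hand.

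Finally, since $\mathcal{S}_I \cap \mathrm{Gr}(k,L)$ is \emph{scheme-theoretically} a Schubert variety in $\mathrm{Gr}(k,L)$ for $L$ generic (the genericity of $L$ guarantees the intersection is proper and reduced, by Kleiman transversality applied to the transitive $\mathrm{GL}(n)$-action on flags), its cohomology class in $H^*(\mathrm{Gr}(k,L), \ZZ)$ is the corresponding Schubert class $[\mathcal{S}_{I - (\rho,\ldots,\rho)}]$, which is what the lemma asserts. I expect the main obstacle to be bookkeeping rather than anything deep: carefully matching the indexing convention for Schubert varieties used in this paper (the conditions $\dim(P \cap \mathrm{span}(e_1,\ldots,e_s)) \geq i_s$) against the induced flag on $L$, and confirming that ``general $L$'' indeed yields a transverse, hence reduced and irreducible, intersection so that the class computation is valid — all standard Schubert calculus, invoked for instance via Kleiman's theorem on the generic transversality of translates.
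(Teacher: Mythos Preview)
Your proposal is correct and is exactly the ``standard argument in Schubert calculus'' that the paper alludes to; the paper itself does not supply a proof of this lemma, only the attribution to standard theory. Your reduction via the induced flag $\mathcal{G}$ on $L$, together with Kleiman transversality to control multiplicity, is the expected argument.
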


\begin{corollary} \label{cor:godown2}
Using notation as above, we have
$\,\lambda(\mathcal{V}) = 
\lambda(\mathcal{V} \, \cap \,{\rm Gr}(k,L) )$.
\end{corollary}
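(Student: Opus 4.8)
The plan is to combine the previous lemma (which identifies $[\mathcal{S}_I \cap {\rm Gr}(k,L)]$ with a shifted Schubert class) with the additivity of cohomology classes under intersection with a generic $L$. First I would recall that for a general subspace $L$ of codimension $\rho$ in $\CC^n$, intersecting with ${\rm Gr}(k,L)$ is a well-defined operation on cohomology classes: since $L$ is generic, the intersection $g\mathcal{W} \cap {\rm Gr}(k,L)$ is transverse for any subvariety $\mathcal{W}$ after a generic translate, so the map $[\mathcal{W}] \mapsto [\mathcal{W} \cap {\rm Gr}(k,L)]$ is linear. Applying this to the Schubert expansion $[\mathcal{V}] = \sum_J \delta_J(\mathcal{V}) [\mathcal{S}_J]$ from~(\ref{eq:schubert}) gives
\begin{equation*}
[\mathcal{V} \cap {\rm Gr}(k,L)] \,\,=\,\, \sum_J \delta_J(\mathcal{V}) \, [\mathcal{S}_J \cap {\rm Gr}(k,L)] \,\,=\,\, \sum_{J : j_1 > \rho} \delta_J(\mathcal{V}) \, [\mathcal{S}_{J - (\rho,\ldots,\rho)}],
\end{equation*}
where the last equality invokes the preceding lemma and the vanishing of the terms with $j_1 \leq \rho$.

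Next I would set $\rho = 1$ (the case of a single general hyperplane; the general case follows by iterating, or one can just take $L$ of codimension $\rho$ at once — the shift is uniform). The key combinatorial point is to track where the distinguished index set $I = \{r-k,\, r-k+2,\, r-k+3,\ldots,r\}$ goes under the shift $J \mapsto J - (\rho,\ldots,\rho)$. The variety $\mathcal{V} \cap {\rm Gr}(k,L)$ lives in ${\rm Gr}(k, n-\rho)$, and its Chow-Lam degree is $\delta_{I'}(\mathcal{V} \cap {\rm Gr}(k,L))$ where $I'$ is the distinguished index set for the parameters $(k, n-\rho, r-\rho)$ — note $r$ must also drop by $\rho$ since the dimension $k(r-k)-1$ of $\mathcal{V}$ is unchanged and $r-k$ is unchanged. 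So $I' = \{(r-\rho)-k,\, (r-\rho)-k+2,\ldots, r-\rho\} = I - (\rho,\ldots,\rho)$. The coefficient $\delta_{I'}(\mathcal{V}\cap {\rm Gr}(k,L))$ is therefore exactly the coefficient of $[\mathcal{S}_{I'}] = [\mathcal{S}_{I - (\rho,\ldots,\rho)}]$ in the displayed sum, which is $\delta_I(\mathcal{V})$ provided $i_1 > \rho$ — and indeed $i_1 = r-k > \rho$ whenever $L$ is chosen so that the intersection is nonempty and nontrivial, which is the regime of interest. Hence $\lambda(\mathcal{V}) = \lambda(\mathcal{V} \cap {\rm Gr}(k,L))$.

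The main obstacle, and the step deserving the most care, is the bookkeeping in the previous paragraph: verifying that the distinguished index set $I$ for $(k,n,r)$ really does map to the distinguished index set for $(k, n-\rho, r-\rho)$ under the uniform shift by $\rho$, and that the shift map $J \mapsto J - (\rho,\ldots,\rho)$ is injective on the index sets $J$ with $j_1 > \rho$ (so no two Schubert classes collide and the coefficient is read off cleanly). Both are elementary once one writes out the definitions of $I$ and of the partial order, but they must be stated explicitly so the reader sees that the coefficient extracted is precisely $\delta_I(\mathcal{V})$ and nothing is absorbed or duplicated. A secondary, purely expository point is to remark that Corollary~\ref{cor:godown1} (equality of Chow-Lam form degrees) and Corollary~\ref{cor:godown2} (equality of Chow-Lam degrees $\lambda$) are the two halves of what is needed for the inductive proof of Theorem~\ref{thm:CL}: the former via the Intersection Formula of Lemma~\ref{lem:chowlamintersec}, the latter via Schubert calculus as above, so that one may reduce to the base case where $\mathcal{V}\cap{\rm Gr}(k,L)$ is zero-dimensional.
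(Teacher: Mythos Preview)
Your approach is exactly the intended one: the paper states the corollary without proof, as an immediate consequence of the preceding lemma, and your expansion of $[\mathcal{V}]$ in the Schubert basis followed by a termwise application of that lemma is precisely how one makes this explicit.

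One slip to fix: the sentence ``note $r$ must also drop by $\rho$ since the dimension $k(r-k)-1$ of $\mathcal{V}$ is unchanged and $r-k$ is unchanged'' is garbled. The dimension is \emph{not} unchanged; rather, ${\rm Gr}(k,L)$ has codimension $k\rho$ in ${\rm Gr}(k,n)$, so $\dim\bigl(\mathcal{V}\cap{\rm Gr}(k,L)\bigr)=k(r-k)-1-k\rho=k\bigl((r-\rho)-k\bigr)-1$, and this is what forces $r'=r-\rho$. Your conclusion $I'=I-(\rho,\ldots,\rho)$ is correct, but the stated justification should be replaced by this dimension count. Also, your closing remark that the base case of Theorem~\ref{thm:CL} is ``zero-dimensional'' does not match the paper: there the base case is $n=r$, i.e.\ $\mathcal{V}$ is already a hypersurface in ${\rm Gr}(k,n)$ and $\mathcal{CL}_\mathcal{V}=\mathcal{V}$. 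This is tangential to the corollary itself, but worth correcting if you keep that expository paragraph.
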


\begin{proof}[Proof of Theorem \ref{thm:CL}]
By Corollaries \ref{cor:godown1} and \ref{cor:godown2},
the assertion follows by induction on $n-r$. It suffices
  to prove the assertion for the base case $n-r = 0$, where
   $\mathcal{V} = \mathcal{CL}_\mathcal{V}$ is a hypersurface in ${\rm Gr}(k,n)$.
Note that $[\mathcal{V}] = \lambda(\mathcal{V}) \cdot [\mathcal{S}_I]$
where $I = \{n-k,n-k+2,n-k+3,\ldots,n\}$. This
$[\mathcal{S}_I]$ is the class of a hyperplane section of ${\rm Gr}(k,n)$
inside its Pl\"ucker embedding. Hence $\mathcal{V}$ is
defined by a polynomial of degree $\lambda(\mathcal{V})$
 in the $\binom{n}{k}$ Pl\"ucker coordinates, which
 is unique modulo Pl\"ucker relations. Moreover, this polynomial
equals the Chow-Lam form ${\rm CL}_{\mathcal{V}}$, since
 $\mathcal{V} = \mathcal{CL}_\mathcal{V}$.
 \end{proof}

We now come to the projection formula for $k \geq 2$, which is the direct generalization of
Proposition \ref{prop:chowprojection}.
Let $Z$ be a general $r \times n$ matrix as in Section 2.
Then $Z$ defines a projection $\PP^{n-1} \dashrightarrow \PP^{r-1}$
and this induces a rational map
${\rm Gr}(k,n) \dashrightarrow {\rm Gr}(k,r)$.
We write $Z(\mathcal{V})$ for the closure  in ${\rm Gr}(k,r)$ of the image of the
variety $\mathcal{V} \subset {\rm Gr}(k,n)$ under the map induced by~$Z$.

\begin{proposition}[Projection Formula] \label{prop:chowlamprojection}
The Chow-Lam form ${\rm CL}_{Z (\mathcal{V})}$ is obtained from the
Chow-Lam form ${\rm CL}_{\mathcal{V}}$ by replacing
the primal Pl\"ucker coordinates with twistor coordinates:
\begin{equation}
\label{eq:twistor2}
  \qquad p_{i_1 i_2 \cdots i_{r-k}} \,\, = \,\, \,[\, Z \,| \, Y \,]_{i_1 i_2 \cdots i_{r-k}}\quad
{\rm for} \,\, 1 \leq i_1 < i_2 < \cdots < i_{r-k} \leq n. 
\end{equation}
This expresses ${\rm CL}_{Z (\mathcal{V})}$ in 
dual Stiefel coordinates on ${\rm Gr}(k,r)$,
given by the $r \times k$ matrix $Y$.
\end{proposition}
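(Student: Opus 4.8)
The plan is to mirror the proof of Proposition~\ref{prop:chowprojection}, recasting the Chow-Lam projection formula as a pullback identity under the induced rational map on Grassmannians. First I would reinterpret both sides geometrically. A point $Y \in {\rm Gr}(k,r)$, represented by the $r \times k$ matrix of dual Stiefel coordinates, spans a subGrassmannian ${\rm Gr}(k, Y)$ inside ${\rm Gr}(k,r)$; its preimage under the projection ${\rm Gr}(k,r) \dashrightarrow$ induced structure is governed by the preimage subspace $Z^{-1}(Y) \subset \PP^{n-1}$, which has codimension $r-k$ in $\PP^{n-1}$ and hence corresponds to a point of ${\rm Gr}(k+n-r,n)$. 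The key equivalence to establish is that, for generic $Z$,
\[
  Z(\mathcal{V}) \,\cap\, {\rm Gr}(k, Y) \,\neq\, \emptyset
  \quad \Longleftrightarrow \quad
  \mathcal{V} \,\cap\, {\rm Gr}\bigl(k, Z^{-1}(Y)\bigr) \,\neq\, \emptyset,
\]
using the description of the Chow-Lam locus in the form (\ref{eq:CLdef2}). The forward direction is immediate: a $k$-plane $Q \in \mathcal{V}$ with $Z(Q) \subseteq Y$ has $Q \subseteq Z^{-1}(Y)$. The reverse direction requires that a $k$-plane $Q \in \mathcal{V}$ lying in $Z^{-1}(Y)$ maps onto a $k$-plane in $Y$ — here genericity of $Z$ is used to ensure $Z$ restricts injectively on $Q$, so $Z(Q)$ really is $k$-dimensional and lands in $Y$. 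This gives the scheme-theoretic identity ${\rm CL}_{Z(\mathcal{V})}(Y) = {\rm CL}_{\mathcal{V}}(Z^{-1}(Y))$ up to scaling.

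Next I would translate the geometric pullback into the stated coordinate substitution. The preimage subspace $Z^{-1}(Y)$ has primal Pl\"ucker coordinates obtained from the kernel description: if $Y$ is the $r \times k$ matrix then $Z^{-1}(Y)$ is cut out by the rows of the matrix $[\,Z \mid Y\,]^{\perp}$, and a standard exterior-algebra computation — the same one underlying (\ref{eq:plm}) and the twistor coordinates introduced just before Proposition~\ref{prop:chowprojection} — shows that the maximal minors of this kernel matrix are precisely the $(r-k)$-element twistor coordinates $[\,Z \mid Y\,]_{i_1 \cdots i_{r-k}}$. Thus evaluating ${\rm CL}_{\mathcal{V}}$, written in primal Pl\"ucker coordinates $p_{i_1 \cdots i_{r-k}}$ on ${\rm Gr}(k+n-r,n)$, at the point $Z^{-1}(Y)$ is exactly the substitution (\ref{eq:twistor2}). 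Since ${\rm CL}_{Z(\mathcal{V})}$ in dual Stiefel coordinates is the defining polynomial of $\mathcal{CL}_{Z(\mathcal{V})}$ pulled back to the $Y$-matrix entries, the two polynomials agree up to a nonzero scalar, which is the content of the proposition.

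There is one loose end to address: one must check that $Z(\mathcal{V})$ again has dimension $k(r-k)-1$ in ${\rm Gr}(k,r)$, so that its Chow-Lam form is defined in the first place, and that the substitution does not collapse the polynomial to zero. For generic $Z$ the induced map ${\rm Gr}(k,n) \dashrightarrow {\rm Gr}(k,r)$ is dominant and generically finite onto its image restricted to $\mathcal{V}$ when $\dim \mathcal{V} \le \dim {\rm Gr}(k,r)$, which holds here since $k(r-k)-1 < k(r-k)$; combined with Lemma~\ref{lem:CLproper} applied to $Z(\mathcal{V})$ this gives the needed dimension count. I expect the main obstacle to be the reverse implication in the displayed equivalence — pinning down exactly which genericity conditions on $Z$ guarantee that membership $Q \subseteq Z^{-1}(Y)$ with $Q \in \mathcal{V}$ forces $Z(Q)$ to be a genuine $k$-plane inside $Y$ rather than degenerating — and, relatedly, verifying that the identification of kernel minors with twistor coordinates is compatible with the sign conventions of (\ref{eq:primaldual35}). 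Both are routine once set up carefully, but they are where the argument actually has content beyond citing Proposition~\ref{prop:chowprojection}.
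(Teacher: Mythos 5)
Your proposal is correct and follows essentially the same route as the paper: the paper's proof is exactly the identification of $Y$ with its column span, the observation that $Z^{-1}(Y)$ is a $(k+n-r)$-dimensional subspace of $\CC^n$, and the equivalence $Y \in \mathcal{CL}_{Z(\mathcal{V})} \Leftrightarrow Z^{-1}(Y) \in \mathcal{CL}_{\mathcal{V}}$, which is your displayed equivalence via (\ref{eq:CLdef2}). The additional details you supply --- the genericity of $Z$ for the reverse implication, and the identification of the maximal minors of a kernel matrix for $Z^{-1}(Y)$ with the twistor coordinates $[\,Z \mid Y\,]_{i_1\cdots i_{r-k}}$ --- are left implicit in the paper (the latter having been set up in Section~\ref{sec:two}), and your filling them in is sound.
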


\begin{proof}
We identify the $r \times k$ matrix $Y$ with its column span,
which is a $(k-1)$-dimensional subspace in $\PP^{r-1}$,
or a $k$-dimensional linear subspace in $\CC^r$.
The inverse image $Z^{-1}(Y)$ is a projective subspace of dimension
$n-r+k-1$ in $\PP^{n-1}$, or
a linear subspace of dimension $n-r+k$ in $\CC^n$.
The subspace $Y$ is a point in $\mathcal{CL}_{Z (\mathcal{V})}$ if and only if
$Z^{-1}(Y)$ lies in  $\mathcal{CL}_{\mathcal{V}}$.
\end{proof}

\begin{corollary}\label{cor:CLprojhyper}
	Any hypersurface obtained by projecting $\mathcal{V} \in {\rm Gr}(k,n)$ into ${\rm Gr}(k,r)$ is read off from
	 ${\rm CL}_{\mathcal V}$ by replacing primal Pl\"ucker coordinates with twistor coordinates via
	(\ref{eq:twistor2}).
\end{corollary}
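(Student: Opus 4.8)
The plan is to deduce this as the special case of the Projection Formula (Proposition~\ref{prop:chowlamprojection}) in which the image lands in a Grassmannian of dimension one more than $\mathcal{V}$, exactly as Corollary~\ref{cor:projhyper} is the case $r=d+2$ of Proposition~\ref{prop:chowprojection}. The only ingredient needed beyond Proposition~\ref{prop:chowlamprojection} is the ``all the way down'' identity: if $W$ is a hypersurface in ${\rm Gr}(k,r)$ of dimension $k(r-k)-1$, then $\mathcal{CL}_W=W$, so that ${\rm CL}_W$ is literally the defining polynomial of $W$ in the Pl\"ucker coordinates on ${\rm Gr}(k,r)$. This is precisely the base case ``$n-r=0$'' isolated in the proof of Theorem~\ref{thm:CL}, where the target Grassmannian of the Chow-Lam locus collapses to ${\rm Gr}(k,r)$ itself.

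First I would fix a general $r\times n$ matrix $Z$ and note that the induced rational map $\mathcal{V}\dashrightarrow{\rm Gr}(k,r)$ is generically finite onto its image, so that $Z(\mathcal{V})$ is irreducible of dimension $\dim(\mathcal{V})=k(r-k)-1$, hence a hypersurface in ${\rm Gr}(k,r)$. Applying the identity of the previous paragraph with $W=Z(\mathcal{V})$, we get $\mathcal{CL}_{Z(\mathcal{V})}=Z(\mathcal{V})$, so ${\rm CL}_{Z(\mathcal{V})}$ is the equation of the hypersurface $Z(\mathcal{V})$, unique up to scaling and modulo the Pl\"ucker relations of ${\rm Gr}(k,r)$ by Theorem~\ref{thm:CL}.

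It then remains to quote Proposition~\ref{prop:chowlamprojection}: the polynomial ${\rm CL}_{Z(\mathcal{V})}$ is obtained from ${\rm CL}_{\mathcal{V}}$ by the substitution~(\ref{eq:twistor2}) of twistor coordinates for the primal Pl\"ucker coordinates $p_{i_1\cdots i_{r-k}}$ on ${\rm Gr}(k+n-r,n)$. Since each twistor coordinate $[\,Z\,|\,Y\,]_{i_1\cdots i_{r-k}}$ is a linear form in the maximal minors of the $r\times k$ matrix $Y$, the outcome is the equation of $Z(\mathcal{V})$ expressed in the dual Stiefel coordinates on ${\rm Gr}(k,r)$ given by $Y$; combining the last two paragraphs yields the corollary. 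The only step that is not pure bookkeeping is the generic-finiteness claim used at the start, i.e.\ that for general $Z$ the projection does not drop the dimension of $\mathcal{V}$; if it does, then $\mathcal{V}$ is degenerate, there is no such hypersurface, and the assertion is vacuous with both sides equal to $1$.
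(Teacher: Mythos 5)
Your proposal is correct and matches the paper's intent: the paper states Corollary~\ref{cor:CLprojhyper} without proof, treating it as an immediate consequence of Proposition~\ref{prop:chowlamprojection}, and your argument simply spells out the two facts being used, namely that $Z(\mathcal{V})$ is a hypersurface in ${\rm Gr}(k,r)$ for general $Z$ (when $\mathcal{V}$ is non-degenerate) and that a hypersurface $W\subset{\rm Gr}(k,r)$ of dimension $k(r-k)-1$ satisfies $\mathcal{CL}_W=W$, which is exactly the base case isolated in the proof of Theorem~\ref{thm:CL}. No gaps; this is the same route, just made explicit.
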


\begin{example}[Genus 6 geometry] \label{ex:fano}
Fix $k=2$ and $n=5$. We start with the case~$r=4$. Let
$\mathcal{V}$ be the subvariety of ${\rm Gr}(2,5)$ defined by
two general linear forms and one general quadric  in the $10$ dual Pl\"ucker coordinates $q_{ij}$.
Then $\mathcal{V}$ is a {\em Fano threefold} of genus $6$, see \cite{Log}.
This threefold has degree $10$ in the ambient space $\PP^9$.
Its Chow-Lam degree is $\lambda(\mathcal{V}) = 4$.
We compute ${\rm CL}_\mathcal{V}$ 
as in Example \ref{ex:254}, i.e.~we augment the
ideal of $\mathcal{V}$ by the bilinear equations in (\ref{eq:twoskewsymmetric})
and then eliminate the $q$-variables after saturation.
The result is a quartic in primal Pl\"ucker coordinates $p_{ij}$
on ${\rm Gr}(3,5)$. This is the Chow-Lam form ${\rm CL}_\mathcal{V}$ 
of our Fano threefold~$\mathcal{V}$.

Consider the projection ${\rm Gr}(2,5) \dashrightarrow {\rm Gr}(2,4)$
given by a general $4 \times 5$ matrix $Z$. The image
$Z(\mathcal{V})$ is a hypersurface in ${\rm Gr}(2,4)$.
Its defining polynomial ${\rm CL}_{Z(\mathcal{V})}$ is obtained by
setting $ p_{i j} =  [\, Z \,| \, Y \,]_{ij} $. These twistor coordinates
are the $4 \times 4$ minors of the $4 \times 7$ matrix 
$[\, Z \,| \, Y \,]$ which use the last two columns.
Hence ${\rm CL}_{Z(\mathcal{V})}$ has degree $4$ in the $2 \times 2$ minors of~$Y$.

 We next assume that $r=3$ and
 $\mathcal{V} \subset {\rm Gr}(2,5)$ is cut out by
four general linear forms and one general quadric  in the  $q_{ij}$.
Here $\mathcal{V}$ is a {\em canonical curve} of genus $6$ and degree~$10$ in the $\PP^5$ defined by the four linear forms;
see \cite[Lemma 4.1]{GHSV}. We have $\lambda(\mathcal{V}) = 10$, so the Chow-Lam form
${\rm CL}_\mathcal{V}$ is a polynomial of degree $10$
in the coordinates $(p_1,\ldots,p_5)$ on ${\rm Gr}(4,5) \simeq (\PP^4)^\vee$.

Consider the projection ${\rm Gr}(2,5) \dashrightarrow {\rm Gr}(2,3)$
given by a general $3 \times 5$ matrix $Z$. The image
$Z(\mathcal{V})$ is a singular curve of degree $10$ in the
dual projective plane ${\rm Gr}(2,3) = (\PP^2)^\vee$,
with coordinates $(y_1,y_2,y_3)$.
We find its equation by the substitution in
(\ref{eq:twistorex}) with $i=1,2,3,4,5$.
\end{example}

The join of two projective varieties is an important operation in
algebraic geometry. The construction was characterized at
the level of Chow forms in \cite[Section 4.1]{DS}.
 The definition of the join generalizes to subvarieties
 of Grassmannians and their Chow-Lam forms.

Let $\mathcal{V}$ and $\mathcal{W}$ be disjoint subvarieties of ${\rm Gr}(k,n)$.
		We define the
		 {\em embedded join} of $\,\mathcal{V}$ and $\mathcal{W}$ to be the union
		\[
		J(\mathcal{V}, \mathcal{W}) \quad := \quad
		\bigcup_{V \in \mathcal{V}, W \in \mathcal{W}} {\rm Gr}(k, V + W).\]
This is a subvariety in ${\rm Gr}(k,n)$.
The dimension of $J(\mathcal{V}, \mathcal{W})$ equals
$\dim \mathcal{V} + \dim \mathcal{W} + k^2$, provided
this number does not exceed $k(n-k)$.
The following result is dual to Lemma \ref{lem:chowlamintersec}.

	\begin{proposition}\label{prop:kjoin}
	Fix $\mathcal{V} \subset {\rm Gr}(k,n)$ of dimension $k(r-k)-1$,
	and consider general subspaces $L,M \subset \CC^n$ with
	${\rm codim}(L+M) = r-k$.
		We have the equality of Chow-Lam forms
		\begin{equation}
	{\rm CL}_{J(\mathcal V, {\rm Gr}(k,L))}(M) \,\, = \,\,{\rm CL}_{\mathcal V}(L+ M).
		\end{equation}
	\end{proposition}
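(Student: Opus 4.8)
The plan is to mirror the proof of Lemma~\ref{lem:chowlamintersec}, exploiting the fact that the embedded join is the ``dual'' operation to relative-Grassmannian intersection. First I would reformulate both sides in terms of the Chow-Lam locus rather than its defining polynomial. By definition, a point $P \in {\rm Gr}(k+n-r,n)$ lies in $\mathcal{CL}_{J(\mathcal V,\,{\rm Gr}(k,L))}$ precisely when there is some $Q \subseteq P$ with $Q \in J(\mathcal V,{\rm Gr}(k,L))$, i.e.\ $Q \in {\rm Gr}(k,V+W)$ for some $V \in \mathcal V$ and some $W \subseteq L$ with $\dim W = k$. The key combinatorial identity I would isolate is that, for $P$ a general subspace, the condition ``${\rm Gr}(k,P)$ meets $J(\mathcal V,{\rm Gr}(k,L))$'' is equivalent to ``${\rm Gr}(k,P \cap L)$ meets $\mathcal V$'' — in other words, intersecting with $P$ after joining with ${\rm Gr}(k,L)$ is the same as intersecting with $P\cap L$ directly. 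Granting this, the statement becomes
\begin{equation}
\mathcal{CL}_{J(\mathcal V,\,{\rm Gr}(k,L))} \cap \{P : P \supseteq \text{nothing extra}\}
\ \text{corresponds to}\ \mathcal{CL}_{\mathcal V}\ \text{pulled back along}\ P \mapsto P \cap L,
\end{equation}
and the asserted identity of polynomials follows by evaluating and tracking Pl\"ucker coordinates.

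Concretely, the role played in Lemma~\ref{lem:chowlamintersec} by the identity ${\rm Gr}(k,L\cap M) = {\rm Gr}(k,L)\cap{\rm Gr}(k,M)$ is here played by its projective dual: if $L$ and $M$ are general with ${\rm codim}(L+M) = r-k$, then a $k$-plane $Q$ lies in $V + W$ for some $W \subseteq L$ iff the span $Q + (\text{a generic }W\subseteq L)$ contains a point of $\mathcal V$, and this happens along the locus cut out by ${\rm CL}_{\mathcal V}$ evaluated at the subspace $L + M$. The second step is then purely the coordinate bookkeeping: express the Pl\"ucker coordinates of $L+M$ in terms of those of $L$ and of $M$ (this is the dual of the wedge-product formula~(\ref{ex:exterior}), i.e.\ a ``meet'' in dual coordinates becomes a ``join''), substitute, and verify that the resulting polynomial in the Pl\"ucker coordinates of $M$ is exactly ${\rm CL}_{J(\mathcal V,\,{\rm Gr}(k,L))}(M)$ up to scalar. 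Since both sides are, by Theorem~\ref{thm:CL}, polynomials of the same degree $\lambda$ in the Pl\"ucker coordinates of $M$ (using that $\lambda$ is unchanged under the join-with-a-relative-Grassmannian operation, which one checks in cohomology exactly as in Corollary~\ref{cor:godown2}), and since they cut out the same hypersurface, they agree up to a nonzero constant, which we absorb into the normalization of ${\rm CL}$.

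The main obstacle I anticipate is the geometric lemma underlying the first step: proving that ``join then intersect with $P$'' equals ``intersect with $P\cap L$'' at the level of loci, and in particular that the general-position hypothesis ${\rm codim}(L+M) = r-k$ is exactly what is needed for the relevant intersections to be transverse and of the expected dimension, so that no spurious components appear and the polynomial identity (as opposed to a mere containment of zero sets) holds. This is the dual of the dimension count in Lemma~\ref{lem:CLproper}, and the cleanest route is probably to set up the incidence correspondence $\{(Q,V,W,P)\}$ carrying both descriptions and show the two projections have the same image with multiplicity one; the genericity of $L$, $M$, and the background $\mathrm{GL}(n)$-translate make the generic fiber reduced. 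Once that correspondence is in hand, the passage to Pl\"ucker coordinates and the degree comparison are routine, parallel to what was done for Lemma~\ref{lem:chowlamintersec} and Corollary~\ref{cor:godown1}.
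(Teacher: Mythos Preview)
The paper actually omits the proof of this proposition entirely (it says ``We omit the proof, and instead close with an example''), so there is nothing to compare against directly. Your overall strategy---dualize Lemma~\ref{lem:chowlamintersec}---is the right one, and the degree/cohomology bookkeeping you sketch at the end is fine. But the central geometric identity you write down is backwards.

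You claim that ``${\rm Gr}(k,P)$ meets $J(\mathcal V,{\rm Gr}(k,L))$'' is equivalent to ``${\rm Gr}(k,P\cap L)$ meets $\mathcal V$.'' It should be $P+L$, not $P\cap L$: the statement to prove is that $M\in\mathcal{CL}_{J(\mathcal V,\,{\rm Gr}(k,L))}$ if and only if $(L+M)\in\mathcal{CL}_{\mathcal V}$. (Note also that $M$ does not live in ${\rm Gr}(k+n-r,n)$ as you wrote; the Chow-Lam locus of the join sits in ${\rm Gr}(k+n-r-\dim L,\,n)$, and indeed $\dim(L+M)=k+n-r$ forces $\dim M = k+n-r-\dim L$ under the genericity hypothesis $L\cap M=0$.) Your later paragraph does eventually land on $L+M$, but the intervening reformulations (``$Q$ lies in $V+W$ for some $W\subseteq L$ iff $Q+(\text{a generic }W)$ contains a point of $\mathcal V$'') are muddled and not the equivalence you need.

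The clean version of the key step is this. Suppose $L\cap M=0$. If $V\in\mathcal V$ satisfies $V\subseteq L+M$, decompose each $v\in V$ uniquely as $v_L+v_M$; set $W=\{v_L\}\subseteq L$ and $Q=\{v_M\}\subseteq M$. For generic $L,M$ one has $V\cap L=V\cap M=0$, so $\dim Q=\dim W=k$, and then $Q\subseteq V+W$ exhibits $Q\in{\rm Gr}(k,M)\cap J(\mathcal V,{\rm Gr}(k,L))$. Conversely, if $Q\subseteq M$ lies in ${\rm Gr}(k,V+W)$ with $W\subseteq L$, then generically $\dim(V+W)=2k$ and $Q\cap W=0$, whence $V+W=Q+W\subseteq M+L$ and in particular $V\subseteq L+M$. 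This is the genuine ``dual'' of ${\rm Gr}(k,L\cap M)={\rm Gr}(k,L)\cap{\rm Gr}(k,M)$. Once you have it, the passage to an identity of Chow-Lam forms follows exactly as you outline.
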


The numerology of dimensions matches since  ${\rm dim} \,J(\mathcal V, {\rm Gr}(k,L)) = k(r+{\rm dim}(L)-k)-1$.
We omit the proof, and instead close with an example to illustrate the embedded join.

	\begin{example}[Joining a curve]
	Let $L \in {\rm Gr}(3,6)$ and fix a curve $\mathcal{V} \subset {\rm Gr}(2,6)$.	Then the join of $\mathcal{V}$ and ${\rm Gr}(2,L) \simeq \PP^2$
is a hypersurface in ${\rm Gr}(2,6)$. Its equation is obtained
by writing ${\rm CL}_\mathcal{V}$ in
dual Pl\"ucker coordinates for $L+M$. 
For instance, if $\mathcal{V}$ is a 
		genus $8$ canonical curve  \cite[Section 4]{GHSV}
		then $\lambda(\mathcal{V}) = 14$,
		and therefore ${\rm CL}_{J(\mathcal V, {\rm Gr}(k,L))}$
		is an equation of degree $14$.
	\end{example}

\section{Matroids and Positroids} \label{sec4}

In this section we turn to subvarieties of
${\rm Gr}(k,n)$ that are defined by the
vanishing of Pl\"ucker coordinates.
For a point $\xi \in {\rm Gr}(k,n)$ in dual Pl\"ucker coordinates,
the associated {\em matroid} $M = M_\xi$
is the set of all indices $I \in \binom{[n]}{k}$
such that $\xi_I = 0$. Thus $M$
is a matroid of rank $k$ on $[n]$,
given by its list of  {\em nonbases}. The
nonbases are the indices of 
dual Pl\"ucker coordinates that are zero.
The matroid $M$ is a {\em positroid}
if $M=M_\xi$ for some $\xi$ with dual Pl\"ucker coordinates all non-negative real numbers.

Conversely, for  a matroid $M$ of rank $k$ on $[n]$, the {\em realization space}
is the constructible set $\{ \, \xi \in {\rm Gr}(k,n) \,: \, M_\xi = M \}$.
The Zariski closure of the realization space in  
the Grassmannian ${\rm Gr}(k,n)$ is denoted  $\mathcal{V}_M$ 
and called the {\em matroid variety} of $M$. Positroids are of
special interest at the interface of combinatorics and physics,
and one uses the term {\em positroid variety} for $\mathcal{V}_M$
if $M$ happens to be a positroid.
Positroid varieties behave much better than general matroid varieties;
as shown in \cite{KLS}.
We seek to compute the Chow-Lam forms of these  varieties.
The images $Z(\mathcal{V}_M)$ of positroid varieties
$\mathcal{V}_M$ under projections $Z : {\rm Gr}(k,n)
\dashrightarrow {\rm Gr}(k,r)$ are known as 
{\em amplituhedron varieties}. Lam's work in
\cite{LamCurrent} aims at computing  the equations defining $Z(\mathcal{V}_M)$ in
twistor coordinates, using the Projection Formula
for the Chow-Lam form.

\begin{example}[$k=2,n=6,r=5$] \label{ex:positroid265}
The matroid $M = \{12,34,56\}$ is a positroid. Its positroid variety 
$\mathcal{V}_M = V(\xi_{12},\xi_{34},\xi_{56})$ is
a subvariety of dimension $ k(r-k) -1 = 5$ in the $8$-dimensional Grassmannian ${\rm Gr}(2,6)$.
This variety has Chow-Lam degree $\lambda(\mathcal{V}_M) = 2$.
 Its Chow-Lam form is the condition for the three lines $\overline{12}$,
 $\overline{34}$ and $\overline{56}$ in $\PP^2$ to be concurrent:
\begin{equation}
\label{eq:CLmatroid1}
  {\rm CL}_{\mathcal{V}_M} \,\, = \,\, 
p_{123} p_{456} \,  - \, p_{124} p_{356}. 
\end{equation}
The derivation of this formula is similar to (but easier than) that in Example~\ref{ex:positroid92}.
The amplituhedron variety $Z(\mathcal{V}_M)$ is a hypersurface in
${\rm Gr}(2,5)$. Its equation is found by replacing the $20$ $\,p_{ijk}$ with the
$5 \times 5$ minors of the $5 \times 7$ matrix $\,[ \, Z \,| \, Y \,]$ that use the last two columns.
\end{example}

\begin{remark}[Schubert matroids]
Among the matroid varieties $\mathcal{V}_M$ are
the Schubert varieties $\mathcal{S}_I$. 
The associated matroids $M_I$ are very special. 
The bases of $M_I$ are the  index sets $J$ such that $J \geq I$.
Oh \cite{Oh} characterizes positroids
in terms of such {\em Schubert matroids}. Namely, positroid varieties $\mathcal{V}_M$
are intersections of  cyclically shifted Schubert varieties $\mathcal{S}_I$.
\end{remark}

Every matroid variety $\mathcal{V}_M$ is naturally stratified
into toric varieties. The torus $T = (\CC^*)^n$ acts
on the Grassmannian ${\rm Gr}(k,n)$. Given any
point $\xi \in {\rm Gr}(k,n)$, we write $\mathcal{T}_\xi$  for 
the Zariski closure of the orbit $T \xi$ in ${\rm Gr}(k,n)$.
This is a toric variety of dimension $\leq n-1$, and the
dimension equals $n-1$ when $\xi$ is generic.
Up to a multiplicative change of coordinates, the
toric varieties $\mathcal{T}_\xi$ depend only on the matroid
$M = M_\xi$, and we therefore write $\mathcal{T}_M = \mathcal{T}_\xi$.

We call $\mathcal{T}_M$ the toric variety associated with the matroid $M$.
The corresponding polytope is the {\em matroid polytope} of $M$.
For example, if $\xi$ is a generic point in ${\rm Gr}(k,n)$ then
$M$ is the uniform matroid, and the
matroid polytope is the hypersimplex $\Delta(k,n)$. Here,
 $\mathcal{T}_M$ is the $(n-1)$-dimensional toric variety
 parametrized by all $\binom{n}{k}$
squarefree monomials of degree $k$ in $n$ unknowns.
It is interesting to study the
Chow-Lam forms of the toric varieties $\mathcal{T}_M$.

\begin{example}[Hypersimplex]
We fix a general $2 \times 6$ matrix $(a_{ij})$  with row span $\xi \in {\rm Gr}(2,6)$.
The toric variety $\mathcal{T}_\xi$ has
dimension $5$ and degree $26$ in  ${\rm Gr}(2,6) \subset \PP^{14}$. Its
 prime ideal is generated by $30$ quadrics like
 $ \, \xi_{13} \xi_{24} \,q_{12}  q_{34} - \xi_{12} \xi_{34} \,q_{13} q_{24}$.
 The Chow-Lam form is found to~be
 \begin{equation}
\label{eq:CLmatroid2}
 \begin{matrix} {\rm CL}_{\mathcal{T}_\xi} & = &
(\xi_{14} \xi_{26} \xi_{35} + \xi_{15} \xi_{24} \xi_{36}
           \,-\,  \xi_{16} \xi_{24} \xi_{35})\, q_{123} q_{456}
\,-\,  \xi_{13} \xi_{25} \xi_{46} \, q_{124} q_{356} \\ & & 
\,+\,  \xi_{12} \xi_{35} \xi_{46} \, q_{134} q_{256}
\,-\,  \xi_{12} \xi_{34} \xi_{56} \, q_{135} q_{246}
\,+\,  \xi_{13} \xi_{24} \xi_{56} \, q_{125} q_{346}.
\end{matrix}
\end{equation}
Modulo the Pl\"ucker ideal, $ {\rm CL}_{\mathcal{T}_\xi} $ is
the unique relation among the $3 \times 3$ minors of the matrix
$$
\begin{bmatrix}
a_{11} x_1 & a_{12} x_2 & a_{13} x_3 & a_{14} x_4 & a_{15} x_5 & a_{16} x_6 \\
a_{21} x_1 & a_{22} x_2 & a_{23} x_3 & a_{24} x_4 & a_{25} x_5 & a_{26} x_6 \\
y_1 & y_2 & y_3 & y_4 & y_5 & y_6 
\end{bmatrix}.
$$
Indeed, this matrix
is a parametric representation of $\mathcal{CL}_{\mathcal{T}_\xi}$,
with $x_i$ and $y_j$ as parameters.
This is a subvariety of dimension $8$ in the
Grassmannian ${\rm Gr}(3,6)$.
Note that (\ref{eq:CLmatroid1}) is obtained
from $(\ref{eq:CLmatroid2}) $ by setting
$\xi_{12} = \xi_{34} = \xi_{56} = 0$
and $\xi_{ij} = 1$ for all other $i,j$.
This reflects the fact that 
$\mathcal{T}_{M} = \mathcal{V}_{M}$
for the positroid $M$ in Example  \ref{ex:positroid265},
and that ${\rm CL}_{\mathcal{T}_\xi}$ specializes to
${\rm CL}_{\mathcal{T}_{M}}$.
\end{example}

We next focus on matroids of rank $k=2$.
In this case, every matroid $M$ is a positroid after relabeling.
We assume that $M$ is loopless, i.e.~for every $i$ there exists $j$ such that $\xi_{ij} \not= 0$.
With this mild hypothesis, every matroid is encoded by
a partition $\beta = (\beta_1 \geq \beta_2 \geq \cdots \geq \beta_t)$
of the integer $n$ into $t \geq 2$ parts.
The non-bases are the pairs that are contained in one of the $t$ blocks
$\{1,\ldots,\beta_1\}$, $\{\beta_1+1,\ldots,\beta_2\}$,
$\ldots\,$, $\{\beta_{t-1}+1,\ldots,\beta_t\}$ in the
corresponding set partition of $[n]$.
From now on, we identify rank $2$ matroids with rank $2$ positroids,
we encode them by partitions $\beta$,
and we write $\mathcal{V}_\beta$ for the associated positroid variety in ${\rm Gr}(2,n)$.
We note that
${\rm codim}(\mathcal{V}_\beta) = n-t$ and $r = \frac{n+t+1}{2}$.
In particular, $\,\mathcal{V}_{(1,1,1,\ldots,1)} = {\rm Gr}(2,n)\,$
and $\,\mathcal{V}_{(2,1,\ldots,1)} = \mathcal{S}_{n-2,n}$.

\begin{proposition}
Let $\beta$ be a partition of $n$ with $t$ parts,
and fix the univariate polynomial
 $$ f(x) \,\,=\,\, \prod_{i=1}^t \sum_{j=1}^{\beta_i} x^{j-1} . $$
 Suppose that $n-t$ is odd. 
                 Then the Chow-Lam degree of the positroid variety $\mathcal{V}_\beta$
                 is the coefficient of the monomial $\,x^{(n-t-1)/2}\,$ in $\,f(x)\,$ minus
                 the coefficient of $\,x^{(n-t-3)/2}\,$ in $\,f(x)$.
\end{proposition}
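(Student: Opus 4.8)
The plan is to compute the cohomology class $[\mathcal{V}_\beta]$ in $H^*({\rm Gr}(2,n),\ZZ)$ and then read off one Schubert coefficient, since by the definition of $\lambda$ and by the expansion (\ref{eq:schubert}) the Chow-Lam degree $\lambda(\mathcal{V}_\beta)$ is the coefficient of $[\mathcal{S}_I]$ in $[\mathcal{V}_\beta]$, where $I = \{r-2,r\}$. Let $B_1,\ldots,B_t$ be the blocks of the associated set partition of $[n]$, so $|B_j| = \beta_j$, and let $E_j = {\rm span}(e_i : i \notin B_j) \subseteq \CC^n$, a coordinate subspace of dimension $n-\beta_j$. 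A $2$-plane $V$ lies in $\mathcal{V}_\beta$ precisely when, for every $j$, the projection of $V$ onto the coordinates indexed by $B_j$ has rank at most $1$; equivalently $V \cap E_j \neq 0$ for all $j$. Thus $\mathcal{V}_\beta = \bigcap_{j=1}^{t} Y_j$, where $Y_j = \{\,V : V \cap E_j \neq 0\,\}$ is the special Schubert variety of codimension $\beta_j - 1$ determined by $E_j$; that this is an equality of reduced schemes, and not just of sets, is part of the good behaviour of positroid varieties established in \cite{KLS}. Write $\sigma_p$ for the $p$-th special Schubert class on ${\rm Gr}(2,n)$, the class of $\{V : V\cap L \neq 0\}$ for $L$ of codimension $p+1$, so that $[Y_j] = \sigma_{\beta_j - 1}$, and write $\sigma_{(a,b)} = \sigma_a\sigma_b - \sigma_{a+1}\sigma_{b-1}$ for the Schubert class of the two-part partition $(a,b)$.

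The key step, and the one I expect to be the real obstacle, is the identity
$$ [\mathcal{V}_\beta] \;=\; \prod_{j=1}^{t}\sigma_{\beta_j - 1} \qquad \text{in } H^*({\rm Gr}(2,n),\ZZ). $$
A priori Kleiman transversality cannot be invoked for $\bigcap_j Y_j$, since the $E_j$ are coordinate subspaces and hence far from generic. What saves the argument is that $\mathcal{V}_\beta$, and the analogous positroid variety $\bigcap_{j\in S} Y_j$ for any subset $S$ of the blocks, is irreducible of the expected codimension $\sum_j(\beta_j-1) = n-t$ by \cite{KLS}; therefore the partial intersections $\bigcap_{j \le s} Y_j$ are proper, and iterating the proper-intersection formula for subvarieties of the smooth variety ${\rm Gr}(2,n)$ gives $[\mathcal{V}_\beta] = \prod_j [Y_j] = \prod_j \sigma_{\beta_j - 1}$. (Alternatively, one degenerates the $E_j$ to generic subspaces through the incidence variety of pairs $((L_j)_j, V)$ with $V \cap L_j \neq 0$ for all $j$, whose generic fibre has class $\prod_j\sigma_{\beta_j-1}$ by Kleiman, and checks that the fibre dimension does not jump over the point $(E_j)_j$, forcing the class to persist.)

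It remains to extract the coefficient of $[\mathcal{S}_I]$ from $\prod_j\sigma_{\beta_j-1}$, and this is where $f(x)$ enters, through a specialization to two variables. Using the dictionary $\mathcal{S}_{\{i_1,i_2\}} \leftrightarrow \sigma_{(n-1-i_1,\,n-i_2)}$, together with $r=(n+t+1)/2$ and the hypothesis that $n-t = 2m+1$ is odd, the class $[\mathcal{S}_I]$ with $I = \{r-2,r\}$ equals $\sigma_{(m+1,\,m)}$, where $m = (n-t-1)/2$. Expand $\prod_{j=1}^{t}\sigma_{\beta_j-1} = \sum_{a\ge b} c_{(a,b)}\,\sigma_{(a,b)}$, where only partitions with $a+b=n-t$ occur. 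Substituting $x_1 = x$ and $x_2 = 1$ into the corresponding Schur polynomials in two variables turns the left side into $\prod_{j=1}^{t}(1+x+\cdots+x^{\beta_j-1}) = f(x)$, and turns $\sigma_{(a,b)}$ into $x^b + x^{b+1} + \cdots + x^a$. Multiplying by $1-x$ yields
$$ (1-x)\,f(x) \;=\; \sum_{b=0}^{m} c_{(n-t-b,\,b)}\bigl(x^{\,b} - x^{\,n-t+1-b}\bigr) . $$
Here the parity hypothesis is used: the exponents in the first family lie in $\{0,\ldots,m\}$, while the exponents $n-t+1-b = 2m+2-b$ in the second family lie in $\{m+2,\ldots,2m+2\}$, so the two ranges are disjoint. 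Hence $c_{(n-t-b,\,b)} = [x^{\,b}]\bigl((1-x)f(x)\bigr)$ for each $0\le b\le m$, and with $b = m$ this gives
$$ \lambda(\mathcal{V}_\beta) \;=\; c_{(m+1,\,m)} \;=\; [x^{\,m}]f(x) - [x^{\,m-1}]f(x) , $$
which is the claimed formula, since $m = (n-t-1)/2$ and $m-1 = (n-t-3)/2$. Apart from the class identity in the second paragraph, everything here is a routine translation of indices and an elementary computation with Schur polynomials in two variables.
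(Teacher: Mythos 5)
Your proposal is correct and follows essentially the same route as the paper: write $[\mathcal{V}_\beta]$ as the product of the special Schubert classes $\sigma_{\beta_j-1}=h_{\beta_j-1}(x,y)$ of the cyclically shifted Schubert varieties cutting out the positroid, dehomogenize to get $f(x)$, and observe that the middle coefficient $c_{(m+1,m)}$ is recovered as $[x^{m}]f-[x^{m-1}]f$. The only differences are cosmetic: you justify the product formula via \cite{KLS} and proper intersections (the paper simply asserts it), and you extract the coefficient by multiplying by $1-x$ rather than by counting which Schubert classes contribute to each power of $x$.
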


\begin{proof}
We represent Schubert classes by symmetric polynomials.
The variety $\mathcal{V}_\beta$ is the 
intersection of Schubert varieties of the form
$\mathcal{S}_{n-\beta_i,n}$ for $i=1,2,\ldots,t$. The
class $[\mathcal{S}_{n-\beta_i,n}]$ 
can be represented by the
complete homogeneous symmetric polynomial $h_{\beta_i-1}(x,y)$
of degree $\beta_i-1$ in two variables $x$ and $y$.
Hence $[\mathcal{V}_\beta]$ is the
product of these symmetric polynomials:
\[[\mathcal{V}_\beta]\,\, = \,\,h_{\beta_1-1}(x,y) \,h_{\beta_2-1}(x,y) \,\cdots \,h_{\beta_t-1}(x,y).\]
We write this product as
a sum of Schubert classes 
$\,x^i y^{n-t-i} + x^{i+1} y^{n-t-i-1} + \cdots + x^{n-t-i} y^i$
for $i = 0,1,\ldots,(n{-}t{-}1)/2$.
The Chow-Lam degree is the coefficient of the middle Schubert class
$\,[\mathcal{S}_{r-2,r}] = \,[\mathcal{S}_{\frac{n+t-3}{2},\frac{n+t+1}{2}}]$ in this decomposition. We dehomogenize it
by setting $y=1$. Thereafter we 
 conclude by observing that $x^{(n-t-1)/2}$ occurs once in each class $[\mathcal{S}_I]$ of codimension $n-t,$ whereas $x^{(n-t-3)/2}$ occurs once in each class except $[\mathcal{S}_{r-2,r}].$
\end{proof}

\begin{example} \label{ex:catalan}
For $n \leq 8$ all Chow-Lam degrees $\lambda(\mathcal{V}_\beta)$ are
$0$, $1$ or $2$. For every integer $n$ between $9$ and $30$,
the maximal value of $\lambda(\mathcal{V}_\beta)$
is attained by a unique partition $\beta$. In the
following table we list all triples $\,n,\lambda(\mathcal{V}_\beta), \beta\,$
that give the maxima for $n$ in that range:
\begin{footnotesize} $$
\begin{matrix}
9, 3, (3222) \qquad
10, 5, (22222) \qquad
11, 5, (222221) \qquad
12, 6, (33222) \qquad
13, 9, (322222) \qquad \\
14, 14, (2222222) \quad
15, 14, (22222221) \quad
16, 19, (3322222) \quad
17, 28, (32222222) \quad
18, 42, (222222222) \quad \\
19, 43, (33322222) \qquad
20, 62, (332222222) \qquad
21, 90, (3222222222) \qquad
22, 132, (22222222222) \qquad \\
23, 145, (3332222222) \quad
24, 207, (33222222222) \quad
25, 297, (322222222222) \quad 
26, 429, (2222222222222) \quad \\
27, 497, \! (333222222222) \quad 
28, 704, \! (3322222222222) \quad 
29, 1001, \! (32222222222222) \quad 
30, 1430, \! (222222222222222)
\end{matrix}
$$
\end{footnotesize}
The first entry says that the $9$-dimensional positroid variety
$\mathcal{V}_{(3222)} \subset {\rm Gr}(2,9)$ has Chow-Lam degree $3$. 
The cubic Chow-Lam form ${\rm CL}_{\mathcal{V}_\beta}$
was computed in Example \ref{ex:positroid92}.
One thing we notice in our table is the appearance of
Catalan numbers whenever $\,n\, \equiv \,2 \, \, {\rm mod} \,4 $.
This calls for an explanation. We shall provide one, along with
a new tool for computing  ${\rm CL}_{\mathcal{V}_\beta}$.
\end{example}

The degree of the Grassmannian ${\rm Gr}(2,s) \subset \PP^{\binom{s}{2}-1}$ is the Catalan number
$$ C_{s-2} \,\,=\,\, \frac{1}{s-1} \binom{2s-4}{s-2}. $$
Hence the Chow form ${\rm C}_{{\rm Gr}(2,s)}$ has degree $C_{s-2}$
in Pl\"ucker coordinates, and it has degree
$(2s-3) \cdot C_{s-2}$ in primal Stiefel coordinates.
This Chow form is the tool we promised above.

\begin{theorem} \label{thm:catalan}
Fix the partition $\beta = (2,2,2,\ldots,2)$ of
$n = 2t$ where $t=2s-3$ is odd. The Chow-Lam degree
of the positroid variety $\mathcal{V}_\beta$
is the Catalan number $C_{s-1} = \frac{n}{s} \,C_{s-2}$.
The Chow-Lam form ${\rm CL}_{\mathcal{V}_\beta}$
has degree $\,s\, C_{s-1} = n \, C_{s-2}\,$ in 
dual Stiefel coordinates, given by an $s {\times} n$ matrix
$X \,= [\,x_1 \,\, x_2 \,\, \cdots \,\,x_n\,]$.
We  obtain ${\rm CL}_{\mathcal{V}_\beta} $ from the Chow form 
 of the Grassmannian ${\rm Gr}(2,s)$
by evaluating the primal Stiefel coordinates at the columns of  the $\binom{s}{2} \times t$~matrix
$$ \tilde X \,\, = \,\, \big[\, x_1 \wedge x_2 \quad x_3 \wedge x_4 \quad
x_5  \wedge  x_6  \quad \cdots \quad
x_{n-1} \wedge x_n \,\bigr]. $$
\end{theorem}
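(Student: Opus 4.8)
\emph{Plan of proof.} The idea is to reinterpret the Chow--Lam locus of $\mathcal{V}_\beta$ as a Chow-form condition for ${\rm Gr}(2,s)$, obtained by ``folding'' the $n=2t$ columns into $t$ pairs. From $k=2$, $n=2t$ and $t=2s-3$ one gets $r=\tfrac{n+t+1}{2}=3s-4$, hence $k+n-r=s$, so $\mathcal{CL}_{\mathcal{V}_\beta}\subseteq{\rm Gr}(s,n)$. Represent a general $P\in{\rm Gr}(s,n)$ by the $s\times n$ matrix $X=[\,x_1\ \cdots\ x_n\,]$ with row space $P$; these are dual Stiefel coordinates. When $X$ has rank $s$, the map $v\mapsto vX$ identifies $\CC^s$ with $P$ and thereby identifies the $2$-planes $Q\subseteq P$ with the points $Q'\in{\rm Gr}(2,s)$: writing $Q$ as the row space of $CX$ for a $2\times s$ matrix $C$ with Pl\"ucker vector $c\in\CC^{\binom s2}$, Cauchy--Binet gives $q_{ij}(Q)=\langle c,\ x_i\wedge x_j\rangle$, where $x_i\wedge x_j\in\bigwedge^2\CC^s$ and the pairing is the standard one. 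Hence $P\in\mathcal{CL}_{\mathcal{V}_\beta}$ if and only if some point of ${\rm Gr}(2,s)\subseteq\PP^{\binom s2-1}$ is annihilated by all of $x_1\wedge x_2,\ \dots,\ x_{n-1}\wedge x_n$, i.e.\ if and only if ${\rm Gr}(2,s)$ meets the linear subspace $L_X$ defined by those $t$ linear forms, whose coefficient vectors are the columns of $\tilde X$.

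The hypothesis $t=2s-3$ is exactly $t=\dim{\rm Gr}(2,s)+1$: for generic $X$ the vectors $x_{2j-1}\wedge x_{2j}$ are linearly independent, so $L_X$ has dimension $\binom s2-1-t=\binom s2-\dim{\rm Gr}(2,s)-2$, the dimension of the linear spaces governed by the Chow form ${\rm C}_{{\rm Gr}(2,s)}$, and the columns of $\tilde X$ are the primal Stiefel coordinates of $L_X$. Therefore $G:={\rm C}_{{\rm Gr}(2,s)}(\tilde X)$ — the Chow form of ${\rm Gr}(2,s)$ with its primal Stiefel coordinates evaluated at the columns of $\tilde X$ — vanishes at $X$ exactly when ${\rm rowspace}(X)\in\mathcal{CL}_{\mathcal{V}_\beta}$, on the dense open set where $X$ has rank $s$ and the wedges are independent. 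On the locus where the wedges are linearly dependent (still with $X$ of rank $s$) the space $L_X$ is only larger, so it still meets ${\rm Gr}(2,s)$ for dimension reasons, whence ${\rm rowspace}(X)\in\mathcal{CL}_{\mathcal{V}_\beta}$, while $G$ vanishes there because ${\rm C}_{{\rm Gr}(2,s)}$ is homogeneous of positive degree in the maximal minors of its Stiefel matrix. Since $\{{\rm rank}(X)<s\}$ has codimension $\ge2$, it follows that $G$ and the affine cone $\widehat{\mathcal{CL}}_{\mathcal{V}_\beta}$ over $\mathcal{CL}_{\mathcal{V}_\beta}$ cut out the same hypersurface in $\CC^{s\times n}$.

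It remains to promote this to the polynomial identity ${\rm CL}_{\mathcal{V}_\beta}=c\,G$ in dual Stiefel coordinates. Because $\mathcal{CL}_{\mathcal{V}_\beta}$ is irreducible (Lemma~\ref{lem:CLproper}) and $G\not\equiv0$ (otherwise $\mathcal{CL}_{\mathcal{V}_\beta}$ would be all of ${\rm Gr}(s,n)$), the nonconstant polynomial $G$ is a scalar times a power of the irreducible polynomial cutting out $\widehat{\mathcal{CL}}_{\mathcal{V}_\beta}$; the same holds for ${\rm CL}_{\mathcal{V}_\beta}$ written in dual Stiefel coordinates, whose zero set in $\CC^{s\times n}$ is $\widehat{\mathcal{CL}}_{\mathcal{V}_\beta}$ away from the codimension-$\ge2$ rank-drop locus, since the maximal minors of any matrix automatically satisfy the Pl\"ucker relations. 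Now count degrees: $\deg_X G=2\cdot(2s-3)\,C_{s-2}=n\,C_{s-2}$ by the degree statement for ${\rm C}_{{\rm Gr}(2,s)}$ recalled before the theorem, whereas ${\rm CL}_{\mathcal{V}_\beta}$ in dual Stiefel coordinates has degree $s\cdot\lambda(\mathcal{V}_\beta)$, and $\lambda(\mathcal{V}_\beta)=C_{s-1}=\tfrac ns C_{s-2}$ follows from the previous Proposition via the identity $\binom{2s-3}{s-2}-\binom{2s-3}{s-3}=C_{s-1}$. Equality of degrees forces the two powers to coincide, giving ${\rm CL}_{\mathcal{V}_\beta}=c\,G$, and with it both the asserted degrees and the displayed formula. \emph{The main obstacle} is precisely this last step — ruling out spurious factors and higher multiplicities when passing between Stiefel and Pl\"ucker coordinates — which is resolved by combining the irreducibility of $\mathcal{CL}_{\mathcal{V}_\beta}$ with the exact degree bookkeeping; the subsidiary genericity claims (that a rank drop of $X$, or a linear dependence among the $x_{2j-1}\wedge x_{2j}$, contributes no new component) are handled by the dimension estimates above.
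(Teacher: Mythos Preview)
Your proof is correct and follows essentially the same geometric idea as the paper: identify $2$-planes $Q\subseteq P$ with points of ${\rm Gr}(2,s)$ via a $2\times s$ matrix $T$ (your $C$), observe that the positroid conditions $q_{2i-1,2i}=0$ become the linear conditions $\langle \wedge_2 T,\, x_{2i-1}\wedge x_{2i}\rangle=0$, and recognize the resulting incidence condition as the Chow form of ${\rm Gr}(2,s)$ evaluated at the columns of~$\tilde X$. The paper's proof stops at the set-theoretic identification and asserts the polynomial identity directly; you go further by explicitly handling the degenerate loci (rank drop of $X$, linear dependence among the wedges), invoking irreducibility of $\mathcal{CL}_{\mathcal{V}_\beta}$, and matching degrees via the previous Proposition and the identity $\binom{2s-3}{s-2}-\binom{2s-3}{s-3}=C_{s-1}$ to rule out spurious powers --- a point the paper leaves implicit.
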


This theorem explains the Catalan numbers in Example \ref{ex:catalan},
since these are the degrees of  the Grassmannians
${\rm Gr}(2,s)$.
 Before proving Theorem \ref{thm:catalan},
 we go over the case $s=4$.
   
 \begin{example}[Five lines admitting a transversal] \label{ex:transversal}
 Let $\beta = (2,2,2,2,2)$ and fix a $4 \times 10$ matrix
 $X = [\,x_1\,\,x_2\,\,\cdots\,\,x_9 \,\,x_0\,]$.
  Its columns $x_i$ are viewed as points in $\PP^3$.
  We are interested in the following condition on $X$:
  there exists a line $L$ which intersects the
  five lines $
  \overline{x_1x_2}$,
$\overline{x_3x_4}$,
$ \overline{x_5x_6}$,
$ \overline{x_7x_8}$,
$ \overline{x_9x_0}$.
This codimension $1$ condition is given by the Chow-Lam form of~$\mathcal{V}_\beta$:
\begin{equation}
\label{eq:AITformula} {\rm CL}_{\mathcal{V}_\beta} \,\,\, = \,\,\, {\rm det} \,\begin{bmatrix} 
    0  & q_{1234} & q_{1256} & q_{1278} & q_{1290}  \\
 q_{1234} &    0  & q_{3456} & q_{3478} & q_{3490}  \\
 q_{1256} & q_{3456} &    0  & q_{5678} & q_{5690} \\
 q_{1278} & q_{3478} & q_{5678} &    0  & q_{7890}  \\
 q_{1290} & q_{3490} & q_{5690} & q_{7890} &    0 
\end{bmatrix}.
\end{equation}
Here the $q_{ijkl}$ are maximal minors of $X$, so they
are dual Pl\"ucker coordinates on ${\rm Gr}(4,10)$.
The expansion into the entries of $X$ has degree $20$ and it 
is the sum of $18\,268\,320 $ monomials.

The formula (\ref{eq:AITformula}) appears in  \cite[Theorem 3.4.7]{AIT},
and it is derived as follows. Write the dual and primal
Pl\"ucker coordinates of the five lines
as the columns of the two $6 \times 5$ matrices
$$ \begin{matrix} \tilde X & = & \big[\, x_1 \wedge x_2 \qquad x_3 \wedge x_4 \qquad
x_5  \wedge  x_6  \qquad
x_7 \wedge x_8 \qquad
x_9 \wedge x_0 \,\bigr] \\
 (\tilde X)^* & = & \big[\, (x_1 \wedge x_2)^* \,\,\,\,\,(x_3 \wedge x_4)^* \,\,\,\,\,
(x_5  \wedge  x_6)^*  \,\,\,\,\,
(x_7 \wedge x_8)^* \,\,\,\,\,
(x_9 \wedge x_0)^* \,\bigr]. 
\end{matrix}
$$
The symmetric $5 \times 5$ matrix $ (\tilde X)^T \cdot (\tilde X)^*$
is precisely the matrix in (\ref{eq:AITformula}).
The left kernel of $\tilde X$ is spanned by
the vector $(L_{12},\ldots,L_{34})$ of
primal Pl\"ucker coordinates of the transversal line $L$.
Using Cramer's rule, we write each
$L_{ij}$ as a $5 \times 5$ minor of the matrix~$\tilde X$.
We compute
\begin{equation} \label{eq:CGr24} {\rm CL}_{\mathcal{V}_\beta} \,\, = \,\, 
{\rm C}_{{\rm Gr}(2,4)} \,\, = \,\,
L_{12} L_{34} \,-\,
L_{13} L_{24} \,+ \, L_{14} L_{23} \,\, = \,\, \frac{1}{2}\, {\rm det} \bigl(
 (\tilde X)^T \cdot (\tilde X)^* \bigr). 
 \end{equation}

 It remains to be seen that this is the Chow-Lam form of $\mathcal{V}_\beta
 = V(q_{12} , q_{34} ,q_{56} ,q_{78} , q_{90})$.
The matrix  $X$ represents a point in $\mathcal{CL}_{\mathcal{V}_\beta}$
if and only if its row space has a subspace $Q$ that lies in
 $ \mathcal{V}_\beta$.
This means that we can find a $2 \times 4$ matrix $T$ such that
the $2 \times 10$ matrix $Q = TX$ has its five minors $\{2i-1,2i\}$ vanish.
Then $L = \wedge_2 T$ is precisely the transversal line above. 
 \end{example}

\begin{proof}[Proof of Theorem \ref{thm:catalan}]
The matrix $X$ represents a point in $\mathcal{CL}_{\mathcal{V}_\beta}$
if and only if there exists a $2 \times s$ matrix $T$  of rank $2$ such that
the  matrix $TX$ has its consecutive minors $\{2i-1,2i\}$ vanish.
We view the entries of $T$ as dual Stiefel coordinates of a line in $\PP^{s-1}$,
 with dual Pl\"ucker coordinates $L = \wedge_2 T$.
 We view the columns   $x_{2i-1} \wedge x_{2i}$ of $\tilde X$ 
 as hyperplanes in the ambient space $\PP^{\binom{s}{2}-1}$
 of the Grassmannian ${\rm Gr}(2,s)$.
 Our condition says that these $t$ hyperplanes intersect ${\rm Gr}(2,s)$ in a common point $L$.
Since $t = 2s-3 = {\rm dim}({\rm Gr}(2,s))+1$, this imposes one constraint on $X$.
   The polynomial in the entries of $X$ that defines this hypersurface
is the Chow form of ${\rm Gr}(2,s)$, now evaluated at the
$t$ hyperplanes $x_{2i-1} \wedge x_{2i}$.
 \end{proof}

\begin{example}[Chow form of ${\rm Gr}(2,5)$]
We present a formula for ${\rm C}_{{\rm Gr}(2,5)}$ 
 in dual Stiefel coordinates, given by
three skew-symmetric $5 \times 5$ matrices $U, V, W$. 
We ask whether 
some linear combination $P = x U + y V + z W$ has rank $2$.
The five Pl\"ucker relations $ f_i = p_{jk} p_{lm} - p_{jl} p_{km} + p_{jm} p_{kl} $
are quadrics in $x,y,z$.  Let $f_6$ be the derivative of the
Jacobian determinant of $f_1,f_2,f_3$ with respect to $z$.
Following \cite[eqn (4.5)]{CBMS}, we
form the $6 \times 6$ matrix 
of coefficients from the ternary quadrics $f_1,f_2,\ldots,f_6$.
Its determinant has degrees $(5,5,6)$ in $ (U,V,W)$,
and it equals  ${\rm C}_{{\rm Gr}(2,5)}$ times
$w_{45} $. Note that ${\rm C}_{{\rm Gr}(2,5)}$ has degree $5$
in Pl\"ucker coordinates.

By rewriting this Chow form in primal Pl\"ucker coordinates,
as above, we obtain a formula for the Chow-Lam form
${\rm CL}_{\mathcal{V}_\beta} $, where
$\beta = (2,2,2,2,2,2,2)$.
This has degree $14$ in Pl\"ucker coordinates on
${\rm Gr}(5,14)$. At present we do not know
any determinantal formula like (\ref{eq:AITformula}).

It remains a challenge to find practical tools for computing
the Chow forms of ${\rm Gr}(2,n)$ when $n \geq 6$.
A formula for the power $({\rm C}_{{\rm Gr}(2,n)})^{n-2}$ in dual Stiefel coordinates
appears  in \cite[Section 3]{AFRW}.
This is based on  resonance varieties for Koszul modules.
It would be desirable to implement and test this formula.
Can the exponent $n-2$ be removed in this construction?
\end{example}

\begin{remark} \label{rmk:Mnkt}
Theorem \ref{thm:catalan} generalizes to a
positroid $M$ on $n=kt$ elements in rank $k \geq 3$.
It has precisely $t$ nonbases which are pairwise disjoint.
We assume that $t = (s-k)k+1$ for some integer $ s > k$.
The Chow-Lam degree of the positroid variety $\mathcal{V}_M \subset {\rm Gr}(k,n)$ equals
$$ \lambda(\mathcal{V}_M) \,\,=\,\, \frac{n}{s} \cdot {\rm degree} \, {\rm Gr}(k,s) , $$
and we can derive ${\rm CL}_{\mathcal{V}_M}$
from ${\rm C}_{{\rm Gr}(k,s)}$ as above. The coordinates are the entries of
an $s \times n$ matrix $X$ and an $\binom{s}{k} \times t$
matrix $\widetilde X$, obtained by wedging the $k$-clusters of columns in $X$.

For example, if $k=3,s=6$ then $t = 10$, $n=30$, and
$\mathcal{V}_M$ is a variety of dimension  $71$ in ${\rm Gr}(3,30)$.
Its Chow-Lam degree equals $\,210 = \frac{n}{s} \cdot 42 = 
\frac{n}{s} \cdot {\rm degree} \,{\rm Gr}(3,6)$. The
$24 \times 30$ matrix $Z$ maps $\mathcal{V}_M$ to a hypersurface
in ${\rm Gr}(3,24)$, with equation
found from ${\rm CL}_{\mathcal{V}_M}$ by Corollary~\ref{cor:projhyper}.
\end{remark}

In this section we presented techniques for studying the
Chow-Lam forms of matroid varieties $\mathcal{V}_M$.
A matrix $X$ of dual Stiefel coordinates represents
a point in $\mathcal{CL}_{\mathcal{V}_M}$ if 
there exists a matrix $T$ such that $T X $ realizes
the matroid $M$. We can compute
${\rm CL}_{\mathcal{V}_M}$ by eliminating
$T$ from the equations $TX \in \mathcal{V}_M$.
It can be preferable to perform the elimination
directly with  dual Pl\"ucker coordinates, i.e.~with
the maximal minors of $X$. 
This approach is recommended
when $\mathcal{V}_M$ is given parametrically.
We conclude the section with such an example.

\begin{example}[A rank $3$ matroid]
Fix the matroid $M = \{126, 135, 234, 456\}$. Here $k=3$, $n=6$, and
$r = 5$, so $\mathcal{V}_M $ has codimension $4$ in ${\rm Gr}(3,6)$,
and it projects to a hypersurface in 
${\rm Gr}(3,5)$. 
Its Chow-Lam locus $\mathcal{CL}_{\mathcal{V}_M}$ is
a subvariety of ${\rm Gr}(4,6)$, given by the parametrization
$$ X \,\,\, = \,\,\, \begin{small} \begin{bmatrix}
   \,  x_1 & 0 & 0 & 0 & \!\! -x_5 & x_6 \\
   \,  0 & x_2 & 0 & x_4 & 0 & \!\! -x_6 \, \\
   \,  0 & 0 & x_3 & \! \! -x_4 & x_5 & 0 \\
   \,  y_1 & y_2 & y_3 & y_4 & y_5 & y_6
\end{bmatrix}. \end{small}
$$
Indeed,  the first three rows parametrize $\mathcal{V}_M$.
The maximal minors satisfy the cubic relation
$$ {\rm CL}_{\mathcal{V}_M} \,\, = \,\,
 q_{1234} q_{1356} q_{2456}
+ q_{1235} q_{1246} q_{3456}
 - q_{1235} q_{1346} q_{2456}. $$
This form is unique modulo the Pl\"ucker relations,
and is easily found by elimination.
\end{example}

\section{Hurwitz-Lam Forms and Beyond}
\label{sec5}

The purpose of this section is to generalize the 
Hurwitz form \cite{Hur} for subvarieties of projective space to 
subvarieties of Grassmannians.
Theorem \ref{thm:hurwitzprojection}
extends to that setting and
provides a tool for computing the
branch loci of projections
of subvarieties of Grassmannians.
At the end of this section, we generalize even further
by introducing higher Chow-Lam forms.

Let $\mathcal{V}$ be a subvariety of ${\rm Gr}(k,n)$
of dimension $k(r-k)$ for some $r  \in \{k+1,\ldots,n\}$.
Fix a general point $P \in {\rm Gr}(k+n-r,n)$. This
represents a linear subspace in $\CC^n$, and we consider the
Grassmannian ${\rm Gr}(k,P)$, which parametrizes  all linear
subspaces $Q \in {\rm Gr}(k,n)$ such that $Q \subseteq P$.
We are interested in the intersection of $\mathcal{V}$ with 
the subGrassmannian ${\rm Gr}(k,P)$.

\begin{lemma} The intersection 
\begin{equation} \label{eq:finiteintersection} \mathcal{V} \, \, \cap \,\, {\rm Gr}(k,P)  \end{equation}
is a finite set of points. The number of points
is the coefficient $\delta_I(\mathcal{V})$ in
(\ref{eq:schubert}), where $I = \{\,r{-}k+1, \,r{-}k+2,\,\ldots,\,r\,\}$.
We call $\,\delta_I(\mathcal{V})$ the {\em Hurwitz-Lam degree} of the variety $\mathcal{V}$.
\end{lemma}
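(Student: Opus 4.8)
The plan is to reduce the statement to a standard intersection-theoretic computation on the Grassmannian, exactly parallel to the proof of Corollary~\ref{cor:chowdegree} but one step up in the Chow-Lam hierarchy. First I would observe that the subGrassmannian ${\rm Gr}(k,P)$ attached to a general point $P \in {\rm Gr}(k+n-r,n)$ is an honest sub-Grassmannian ${\rm Gr}(k,\ell)$ with $\ell = k+n-r$, hence has dimension $k(\ell-k) = k(k+n-r-k) = k(n-r)$. Since $\dim \mathcal{V} = k(r-k)$, the expected dimension of the intersection (\ref{eq:finiteintersection}) is
\[
k(r-k) + k(n-r) - k(n-k) \,\,=\,\, 0,
\]
so for a transversal intersection we expect a finite set of points. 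Transversality for general $P$ is the content of a Kleiman-type moving lemma: the family of subGrassmannians ${\rm Gr}(k,P)$ as $P$ ranges over ${\rm Gr}(k+n-r,n)$ is a homogeneous family under ${\rm GL}(n,\CC)$ acting on ${\rm Gr}(k,n)$, so by Kleiman transversality a general translate meets $\mathcal{V}$ transversally, and in particular in finitely many reduced points, all lying in the smooth locus of $\mathcal{V}$.

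Next I would identify the cohomology class $[{\rm Gr}(k,P)]$ with a Schubert class and read off the count. The subGrassmannian ${\rm Gr}(k,P)$ with $P$ a general $(k+n-r)$-plane is precisely the locus of $k$-planes contained in a fixed general $(k+n-r)$-plane; in the Schubert-variety notation of Section~\ref{sec:three}, this is a translate of $\mathcal{S}_I$ with $i_s$ chosen so that the condition "$\dim(Q \cap {\rm span}(e_1,\ldots,e_{n-r+s}))\ge s$" becomes "$Q \subseteq {\rm span}(e_1,\ldots,e_{k+n-r})$", i.e. $i_s = n-r+s$ for $s = 1,\ldots,k$. Hence $I = \{\,r-k+1,\,r-k+2,\,\ldots,\,r\,\}$, and $\dim \mathcal{S}_I = \sum_{s=1}^k(i_s - s) = \sum_{s=1}^k (n-r) = k(n-r)$, consistent with the dimension count above; equivalently this is the complementary Schubert class to the one whose coefficient is $\delta_I(\mathcal{V})$. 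Then, by the Proposition preceding Theorem~\ref{thm:CL} (the one computing $\delta_I(\mathcal{V})$ as the number of points of $g\mathcal{V}\cap\mathcal{S}_{I^c}$ for general $g$), or directly by taking the degree of the product $[\mathcal{V}]\cdot[\mathcal{S}_I]$ in $H^*({\rm Gr}(k,n),\ZZ)$ and using that Schubert classes form a self-dual basis under the intersection pairing, the number of points equals exactly the coefficient $\delta_I(\mathcal{V})$ in the expansion (\ref{eq:schubert}).

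The one point requiring a little care — and the place I would expect the only genuine obstacle — is matching the index set: I must verify that the Schubert class Poincaré-dual to $[{\rm Gr}(k,P)]$ is indeed $[\mathcal{S}_I]$ for the stated $I = \{r-k+1,\ldots,r\}$, rather than an off-by-one variant, and that this is the same index set appearing in the definition of the Chow-Lam degree shifted appropriately. Concretely: the generic subGrassmannian ${\rm Gr}(k,P)$ is the special Schubert variety "$\sigma$ imposing $Q \subseteq P$", whose class is $[\mathcal{S}_J]$ with $J = \{n-r+1,\ldots,n-r+k\}$; its Poincaré dual (complement) is $J^c = \{\,n+1-(n-r+k),\ldots,n+1-(n-r+1)\,\} = \{r-k+1,\ldots,r\} = I$, as claimed. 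Since the intersection pairing $\langle[\mathcal{S}_A],[\mathcal{S}_B]\rangle = \delta_{A,B^c}$, we get $\#(\mathcal{V}\cap{\rm Gr}(k,P)) = \deg([\mathcal{V}]\cdot[\mathcal{S}_J]) = \delta_I(\mathcal{V})$, which is the assertion. The finiteness and reducedness then follow from the Kleiman transversality invoked above, completing the proof.
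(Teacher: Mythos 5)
Your proposal is correct and follows exactly the route the paper intends: a dimension count showing the expected dimension of $\mathcal{V}\cap{\rm Gr}(k,P)$ is zero (the analogue of the count in Lemma~\ref{lem:CLproper}), together with the identification of $[{\rm Gr}(k,P)]$ as the Schubert class $[\mathcal{S}_{I^c}]$ with $I^c=\{n-r+1,\ldots,n-r+k\}$ and an appeal to the duality of Schubert classes, which is precisely the content of the Proposition computing $\delta_I(\mathcal{V})$ as $\#(g\mathcal{V}\cap\mathcal{S}_{I^c})$. The paper's own proof is only a two-sentence pointer to these analogous arguments, so your write-up simply supplies the details it omits; the index-set bookkeeping you flag as the delicate point checks out.
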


\begin{proof} The proof is a dimension argument,
like that leading to Lemma \ref{lem:CLproper}.
The identification  of the specific Schubert class
$\mathcal{S}_I$ is analogous to that in
the derivation of Theorem \ref{thm:CL}.
\end{proof}

We define the {\em Hurwitz-Lam locus} of the given variety $\mathcal{V}$ as follows:
\begin{equation}
\label{eq:HLV} \mathcal{HL}_\mathcal{V} \,\, = \,\,
\bigl\{ \, P \in {\rm Gr}(k+n-r,n) \,: \, \hbox{the intersection (\ref{eq:finiteintersection}) is not transverse} \bigr\} . 
\end{equation}
If the Hurwitz-Lam degree is at least two then 
$\mathcal{HL}_\mathcal{V}$ is a hypersurface in  ${\rm Gr}(k+n-r,n)$.
The polynomial  in Pl\"ucker coordinates that defines this hypersurface is the
 {\em Hurwitz-Lam form}, denoted ${\rm HL}_{\mathcal{V}}$.
 If the Hurwitz-Lam locus is not a hypersurface then we set
 ${\rm HL}_{\mathcal{V}} = 1$.

The Hurwitz-Lam form ${\rm HL}_{\mathcal V}$
of a subvariety $\mathcal{V} \subset {\rm Gr}(k,n)$ can be
used to describe the branch loci of projections to
smaller Grassmannians. The set-up is as in 
Proposition  \ref{prop:chowprojection}.

\begin{theorem}[Computing Branch Loci] \label{thm:branch}
Let $\mathcal{V}$ be a variety in ${\rm Gr}(k,n)$ whose
Hurwitz-Lam degree $\delta_I(\mathcal{V})$ is at least two. 
A general linear projection $Z $ maps $\mathcal{V}$
onto ${\rm Gr}(k,r)$. The branch locus is a hypersurface
in ${\rm Gr}(k,r)$, and its equation is obtained
from  the Hurwitz-Lam form ${\rm HL}_\mathcal{V}$
by replacing the primal Pl\"ucker coordinates with twistor 
coordinates as in (\ref{eq:twistor2}).
\end{theorem}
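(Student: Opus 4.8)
The plan is to mirror the structure of the proof of Theorem~\ref{thm:hurwitzprojection}, which handled the classical case $k=1$, and combine it with the already-established geometric correspondence behind the Chow-Lam Projection Formula (Proposition~\ref{prop:chowlamprojection}). The key point is that the branch locus of the projection $Z : \mathcal{V} \to {\rm Gr}(k,r)$ is, by definition, the locus of points $Y \in {\rm Gr}(k,r)$ over which the fiber of $\mathcal{V} \to Z(\mathcal{V}) = {\rm Gr}(k,r)$ fails to be transverse/reduced. I first need to identify that fiber intrinsically: for $Y \in {\rm Gr}(k,r)$, represented as the column span of an $r \times k$ matrix, the preimage $Z^{-1}(Y)$ is a linear subspace of dimension $k+n-r$ in $\CC^n$, exactly as in the proof of Proposition~\ref{prop:chowlamprojection}. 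The fiber of $\mathcal{V}$ over $Y$ is then $\mathcal{V} \cap {\rm Gr}(k, Z^{-1}(Y))$, which by the preceding Lemma is generically a finite set of $\delta_I(\mathcal{V})$ points, where $I = \{r-k+1,\ldots,r\}$.

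With that identification in hand, the logical steps are: (1) observe that $Y$ lies in the branch locus of $Z(\mathcal{V}) \to {\rm Gr}(k,r)$ if and only if the intersection $\mathcal{V} \cap {\rm Gr}(k, Z^{-1}(Y))$ is non-transverse --- i.e.\ if and only if $Z^{-1}(Y) \in \mathcal{HL}_\mathcal{V}$, by the very definition~(\ref{eq:HLV}) of the Hurwitz-Lam locus; (2) note that $P \mapsto Z^{-1}(P)$ is the inverse of the linear map ${\rm Gr}(k+n-r,n) \dashrightarrow {\rm Gr}(k+n-r,r)$ induced by $Z$, so pulling back the defining polynomial ${\rm HL}_\mathcal{V}$ of $\mathcal{HL}_\mathcal{V}$ along this map gives the defining polynomial of the branch locus; (3) invoke Proposition~\ref{prop:chowlamprojection} (or rather the computation in its proof) to see that this pullback is effected algebraically by the substitution of twistor coordinates~(\ref{eq:twistor2}), since evaluating a polynomial in primal Pl\"ucker coordinates of $P$ at $P = Z^{-1}(Y)$ is precisely the operation $p_{i_1\cdots i_{r-k}} \mapsto [\,Z\,|\,Y\,]_{i_1\cdots i_{r-k}}$. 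Finally I should remark that the hypothesis $\delta_I(\mathcal{V}) \geq 2$ guarantees $\mathcal{HL}_\mathcal{V}$ is genuinely a hypersurface (so ${\rm HL}_\mathcal{V} \neq 1$), and that genericity of $Z$ ensures the generic fiber really is $\delta_I(\mathcal{V})$ reduced points, so the branch locus is a proper hypersurface and the substitution does not collapse.

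The main obstacle is step~(1): one must be careful that ``non-transversality of $\mathcal{V} \cap {\rm Gr}(k,Z^{-1}(Y))$ inside ${\rm Gr}(k,n)$'' coincides with ``$Y$ is a branch point of the morphism $\mathcal{V} \to {\rm Gr}(k,r)$.'' For a general $Z$, the map $\mathcal{V} \to {\rm Gr}(k,r)$ is dominant and generically finite of degree $\delta_I(\mathcal{V})$, and its scheme-theoretic fiber over $Y$ is the scheme $\mathcal{V} \cap {\rm Gr}(k,Z^{-1}(Y))$ because ${\rm Gr}(k,Z^{-1}(Y))$ is exactly the scheme-theoretic preimage of the point $Y \in {\rm Gr}(k,r)$ under the induced rational map ${\rm Gr}(k,n) \dashrightarrow {\rm Gr}(k,r)$. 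Ramification of a generically finite morphism is detected precisely by the fiber being non-reduced, i.e.\ by non-transversality of this intersection. I would make this precise by noting that the induced map ${\rm Gr}(k,n) \dashrightarrow {\rm Gr}(k,r)$ is a (rational) fibration with fibers the subGrassmannians ${\rm Gr}(k,Z^{-1}(Y))$, so restricting to $\mathcal{V}$ gives the desired description. The remaining steps are then essentially a reprise of the arguments already given for Proposition~\ref{prop:chowlamprojection} and Theorem~\ref{thm:hurwitzprojection}, so they require no new ideas.
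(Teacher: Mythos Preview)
Your proposal is correct and follows essentially the same route as the paper: identify the fiber over $Y$ with $\mathcal{V}\cap{\rm Gr}(k,Z^{-1}(Y))$, observe that $Y$ is a branch point iff $Z^{-1}(Y)\in\mathcal{HL}_\mathcal{V}$, and then pull back ${\rm HL}_\mathcal{V}$ via the twistor substitution exactly as in Proposition~\ref{prop:chowlamprojection}. The one ingredient the paper makes explicit that you leave implicit is the reason the branch locus has pure codimension one: the paper invokes the Zariski--Nagata Purity Theorem for this, and then uses it to deduce that $\mathcal{HL}_\mathcal{V}$ itself is a hypersurface (so the claim ``$\delta_I(\mathcal{V})\geq 2$ implies $\mathcal{HL}_\mathcal{V}$ is a hypersurface,'' which you take as given, is actually justified \emph{inside} this proof via purity of the branch locus).
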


\begin{proof}
A point $Y \in {\rm Gr}(k,r)$ lies in the branch locus of the map $Z$ from $\mathcal{V}$
if and only if the fiber $Z^{-1}(Y)$ intersects
$\mathcal{V}$ non-transversally. This happens
if and only if $Z^{-1}(Y)$ is in $\mathcal{HL}_\mathcal{V}$.
That the branch locus is codimension one
follows from the Zariski-Nagata Purity Theorem.
This  ensures that $\mathcal{HL}_\mathcal{V}$ has codimension one.
We obtain the equation of the branch locus from ${\rm HL}_\mathcal{V}$
by the same arguments as in Proposition \ref{prop:chowlamprojection}.
For $k=1$ see Theorem \ref{thm:hurwitzprojection}.
\end{proof}

We illustrate Theorem \ref{thm:branch} with a case study
that is motivated by  amplituhedra and other
Grassmann polytopes \cite{LamCurrent}.
We set $k=2, n=8$ and we let
$\mathcal{V} = \mathcal{V}_\beta = V(q_{12},q_{34},q_{56},q_{78})$ be the positroid variety 
in ${\rm Gr}(2,8)$ given by the partition $\beta = (2,2,2,2)$.
We have ${\rm codim}(\mathcal{V})=4$
and ${\rm dim}(\mathcal{V}) = 8$. 
The Hurwitz-Lam degree of $\,\mathcal{V}$ is two. 
This means that a  general linear projection 
$Z:{\rm Gr}(2,8) \dashrightarrow {\rm Gr}(2,6)$
induces a $2$-to-$1$ map from $\mathcal{V}$ 
onto ${\rm Gr}(2,6)$.
The branch locus of this map is a hypersurface in ${\rm Gr}(2,6)$.
The next proposition gives  its~equation.

\begin{proposition} \label{prop:G28}
	The Hurwitz-Lam form of the positroid variety $\mathcal{V} \subset {\rm Gr}(2,8)$ equals
	\begin{equation}
		\label{eq:G28formula} 
			{\rm HL}_\mathcal{V} \,\,\,= \,\,\,
		 {\rm det} \,\begin{bmatrix} 
			\,0  & q_{1234} & q_{1256} & q_{1278} \,   \\
			\,q_{1234} &    0  & q_{3456} & q_{3478} \, \\
\,			q_{1256} & q_{3456} &    0  & q_{5678} \,  \\
\,			q_{1278} & q_{3478} & q_{5678} &    0  \,
		\end{bmatrix}.
	\end{equation}
	This formula is invariant under the duality map on ${\rm Gr}(4,8)$, so it works
	in both primal and dual Pl\"ucker coodinates. The branch locus of $Z$
	in ${\rm Gr}(2,6)$ is a quartic hypersurface, whose equation is obtained by
	substituting the $70$ twistor coordinates for the $p_{ijkl}$ in ${\rm HL}_\mathcal{V}$.
\end{proposition}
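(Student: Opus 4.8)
The plan is to follow the same geometric strategy used in Example~\ref{ex:transversal} and the proof of Theorem~\ref{thm:catalan}, but now in the ``one dimension higher'' setting that produces the Hurwitz-Lam form rather than the Chow-Lam form. First I would describe the Hurwitz-Lam locus concretely: an $8\times 8$ matrix $X$ with row span in ${\rm Gr}(8,8)$ restricted appropriately --- more precisely, a point $P\in{\rm Gr}(4,8)$, represented by a $4\times 8$ matrix --- lies in $\mathcal{HL}_\mathcal{V}$ exactly when the intersection $\mathcal{V}\cap{\rm Gr}(2,P)$ fails to be transverse. Since $\mathcal{V}=V(q_{12},q_{34},q_{56},q_{78})$, a line $Q\subseteq P$ lies in $\mathcal{V}$ iff, writing $Q$ in dual Stiefel coordinates $T$ (a $2\times 4$ matrix) relative to a basis of $P$, the product $TX$ has its four consecutive $2\times 2$ minors $\{12\},\{34\},\{56\},\{78\}$ vanishing. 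Wedging, this says the dual Pl\"ucker vector $L=\wedge_2 T\in{\rm Gr}(2,4)$ lies on the four hyperplanes $x_{2i-1}\wedge x_{2i}$ in $\PP^{\binom{4}{2}-1}=\PP^5$, where the $x_j$ are the columns of $X$ (or of the matrix representing $P$). By the Hurwitz-Lam degree computation (which the preceding lemma gives as $2$), ${\rm Gr}(2,4)$ meets four general hyperplanes in $\deg{\rm Gr}(2,4)=C_2=2$ points, so the non-transversality condition --- the two intersection points colliding --- is governed by the \emph{Hurwitz form} of ${\rm Gr}(2,4)$, evaluated at those four hyperplanes $x_{2i-1}\wedge x_{2i}$.

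Next I would identify that Hurwitz form explicitly. The Grassmannian ${\rm Gr}(2,4)\subset\PP^5$ is the single quadric $\omega=0$ where $\omega$ is the symplectic/Pfaffian quadratic form, i.e. the Klein quadric. The Hurwitz form of a quadric hypersurface $\{v^T A v=0\}$ in $\PP^5$ --- the condition on a codimension-two linear section $\Lambda$ for the restricted quadric on $\Lambda\cong\PP^1$ to have a double point --- is the discriminant of the pencil cut out on that $\PP^1$, which for four general hyperplanes $h_1,h_2,h_3,h_4$ spanning the orthogonal complement equals, up to scale, $\det(h_i^T A^{-1} h_j)$, the $4\times 4$ Gram matrix of the $h_i$ against the dual quadratic form $A^{-1}$. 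For the Klein quadric, the matrix $A$ representing $\omega$ in Pl\"ucker coordinates is (up to sign) its own inverse --- the pairing $q_{ij}\mapsto \pm q_{kl}$ on complementary indices --- so $h_i^T A^{-1}h_j$ evaluated at $h_i=x_{2i-1}\wedge x_{2i}$ is exactly the symplectic pairing $\langle x_{2i-1}\wedge x_{2i},\,x_{2j-1}\wedge x_{2j}\rangle$. Expanding this pairing in terms of $4\times 4$ minors of the $4\times 8$ matrix $X$ gives the entry $q_{\,2i-1\,2i\,2j-1\,2j}$ (with the appropriate sign convention making it $q_{1234},q_{1256},\ldots$), and the diagonal entries vanish because $\langle h_i,h_i\rangle=0$ for the alternating form --- this is precisely the matrix displayed in (\ref{eq:G28formula}). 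Thus ${\rm HL}_\mathcal{V}$ equals that $4\times 4$ determinant, of degree $4$ in the $q_{ijkl}$, matching the Hurwitz-Lam degree $2$ and the expected degree of a Hurwitz form of a conic.

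Then the two remaining assertions are quick. The duality invariance follows because the Klein quadric is self-dual: complementing the four index sets $1234,1256,3456,1278,3478,5678$ inside $[8]$ permutes them among themselves (it is the order-reversing involution $i\mapsto 9-i$ composed with the block structure), and the Pfaffian pairing is preserved, so rewriting (\ref{eq:G28formula}) in dual Pl\"ucker coordinates on ${\rm Gr}(4,8)$ gives the same determinant up to an overall sign. Finally, the branch-locus statement is immediate from Theorem~\ref{thm:branch}: a general $Z:{\rm Gr}(2,8)\dashrightarrow{\rm Gr}(2,6)$ is $2$-to-$1$ onto ${\rm Gr}(2,6)$, the branch locus is cut out by ${\rm HL}_\mathcal{V}$ with the $70$ Pl\"ucker coordinates $p_{ijkl}$ replaced by the $70$ twistor coordinates $[\,Z\,|\,Y\,]_{ijkl}$ via (\ref{eq:twistor2}), and since ${\rm HL}_\mathcal{V}$ has degree $4$, the branch hypersurface in ${\rm Gr}(2,6)$ is a quartic. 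The main obstacle I anticipate is pinning down the sign conventions so that the Gram matrix of the symplectic form against the $x_{2i-1}\wedge x_{2i}$ reproduces exactly the displayed matrix with zeros on the diagonal and $q_{1234},q_{1256},\ldots$ off it --- i.e. verifying that the Klein quadric's defining form really does act as its own inverse under the chosen Pl\"ucker normalization --- and, secondarily, confirming that the classical Hurwitz form of a smooth quadric in $\PP^5$ against a generic $2$-codimensional linear section is indeed this Gram determinant rather than some power or multiple of it; both are bookkeeping checks rather than conceptual difficulties.
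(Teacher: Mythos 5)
Your proposal is correct and follows the same geometric reduction as the paper: both arguments translate membership of $P$ in the Hurwitz--Lam locus into the statement that the four lines $\overline{x_1x_2},\ldots,\overline{x_7x_8}$ in $\PP(P)\simeq\PP^3$ admit only a single (doubled) common transversal, i.e.\ that the line in $\PP^5$ determined by the four hyperplanes $x_{2i-1}\wedge x_{2i}$ is tangent to the Klein quadric ${\rm Gr}(2,4)$; the duality invariance and the branch-locus statement via Theorem \ref{thm:branch} are likewise handled identically. Where you diverge is in identifying that tangency condition with the displayed determinant. The paper writes the Hurwitz form of ${\rm Gr}(2,4)$ explicitly as $\sum_{i<j}(-1)^{\sigma(i,j)}L_{ij}L_{7-j,7-i}$ in the Pl\"ucker coordinates of the kernel line $L$ of $\tilde X$, and then converts this to $\det\bigl((\tilde X)^T(\tilde X)^*\bigr)$ by the Cauchy--Binet formula followed by a Laplace expansion. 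You instead invoke the general fact that the Hurwitz form of a smooth quadric $\{v^TAv=0\}$, evaluated at the line $\{h_1=\cdots=h_4=0\}$, is the Gram determinant $\det(h_i^TA^{-1}h_j)$, combined with the self-duality of the Pfaffian pairing. Your route is somewhat more conceptual---it explains a priori why a symmetric matrix with zero diagonal appears---at the cost of having to justify the dual-quadric lemma, a short linear-algebra argument ($A|_\Lambda$ is degenerate iff $\det(H^TA^{-1}H)=0$ for $\Lambda=\ker H^T$, with matching degrees on both sides) that you correctly flag but do not carry out. Two minor slips, neither affecting correctness: a line in $\PP^5$ cut out by four hyperplanes has codimension four, not two (as your own notation $\Lambda\simeq\PP^1$ shows); and in the duality argument the six index sets are permuted simply by complementation in $[8]$, realized on the $4\times 4$ matrix by conjugating with the order-reversing permutation of rows and columns, so the parenthetical about composing with $i\mapsto 9-i$ is superfluous.
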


\begin{example} \label{ex:dmitrii}
	Consider the map ${\rm Gr}(2,8) \rightarrow {\rm Gr}(2,6)$ given by the totally positive matrix
	$$ 
	Z \quad = \quad \begin{bmatrix} 
		\,      1 & 0 & 0 & 0 & 0 & 0 & -1 & -6 \, \, \\
		\,             0 & 1 & 0 & 0 & 0 & 0 &  \phantom{-} 1 & \phantom{-} 5 \, \, \\
		\,         0 & 0 & 1 & 0 & 0 & 0 & -1 & -4 \, \, \\
		\,             0 & 0 & 0 & 1 & 0 & 0 & \phantom{-} 1 & \phantom{-} 3 \, \, \\
		\,             0 & 0 & 0 & 0 & 1 & 0 & -1 & -2 \, \, \\
		\,             0 & 0 & 0 & 0 & 0 & 1 &  \phantom{-}1 &  \phantom{-}1 
	\end{bmatrix}.
	$$             
	The image of the positroid cell $\mathcal{V}_{\geq 0}$ under $Z$
	is an interesting Grassmann polytope. The quartic branch locus described above
	contributes a piece to the boundary of this object.
	Amplituhedron experts are interested in the quartic equation defining that branch locus.
	
	Let $Y$ be a $6 \times 2$ matrix of unknowns,
	and write $y_{ij}$ for the $2 \times 2$ minors of $Y$, where $1\! \leq \! i \! < \! j \! \leq\! 6$. The substitution (\ref{eq:twistor2}) from primal Pl\"ucker coordinates to twistor coordinates~is
	$$ \begin{matrix}
		p_{1234} = y_{56}, \,p_{1235} = -y_{46}, \,p_{1236} = y_{45}, \,p_{1237} = y_{45}+y_{46}+y_{56},\,
		p_{1238} = y_{45}+2 y_{46}+3 y_{56}, \,\ldots,\\
		p_{4678} = 2y_{12}+3 y_{13}-y_{15}+4 y_{23}-2 y_{25}-y_{35}\, , \,\,
		p_{5678} = y_{12}+2 y_{13}+y_{14}+3 y_{23}+2 y_{24}+y_{34}.
	\end{matrix}
	$$
	After this substitution, ${\rm HL}_\mathcal{V}$ is a quartic 
	in dual Pl\"ucker coordinates $y_{12}, y_{13}, \ldots, y_{56}$ on ${\rm Gr}(2,6)$.
	Its unique expansion in terms of standard monomials has $126$ terms. It looks like
	$$ \begin{matrix}
		10 y_{12}^2 y_{34} y_{56}-4 y_{12}^2 y_{35}^2-4 y_{12}^2 y_{35} y_{36}
		-12 y_{12}^2 y_{35} y_{45}-14y_{12}^2 y_{35} y_{46}
		-y_{12}^2 y_{36}^2-4 y_{12}^2 y_{36} y_{46}
		-9 y_{12}^2 y_{45}^2 \\
		-\,12 y_{12}^2 y_{45} y_{46}
		-4 y_{12}^2 y_{46}^2
		+16 y_{12} y_{13} y_{34} y_{56}
		-16 y_{12} y_{13} y_{35} y_{45}
		-20 y_{12} y_{13} y_{35} y_{46} \,+\,\, \cdots \, \cdots .\end{matrix} $$
	Recall that a monomial $y_{i_1 j_1} y_{i_2 j_2} y_{i_3 j_3} y_{i_4 j_4}$ is {\em standard} if 
	$i_1 \! \leq \! i_2 \!\leq \! i_3 \! \leq \! i_4$ and $j_1 \! \leq \! j_2 \! \leq \! j_3 \! \leq \! j_4$.
	This quartic equation with $126$ standard monomials defines the branch locus in ${\rm Gr}(2,6)$.
\end{example}

\begin{proof}[Proof of Proposition \ref{prop:G28}]
	One checks that the expression on the right hand side of (\ref{eq:G28formula})
	is invariant under the duality map given by $\,p_{\sigma_1 \sigma_2 \sigma_3 \sigma_4}
	\mapsto {\rm sign}(\sigma)\, p_{\sigma_5 \sigma_6 \sigma_7 \sigma_8}$
	for any permutation $\sigma $ of $\{1,2,\ldots,8\}$.
	We may therefore prove (\ref{eq:G28formula}) for the dual Pl\"ucker coordinates on ${\rm Gr}(4,8)$.
	
	Any element $P$ of ${\rm Gr}(4,8)$ can be represented as the row span of a rank $4$ matrix
	$$
	X \,\,\, = \,\,\,
	\begin{bmatrix} \,x_{11} & x_{12} & x_{13} & x_{14} & x_{15} & x_{16} & x_{17} & x_{18} \, \\
		\,   x_{21} & x_{22} & x_{23} & x_{24} & x_{25} & x_{26} & x_{27} & x_{28} \, \\
		\, x_{31} & x_{32} & x_{33} & x_{34} & x_{35} & x_{36} & x_{37} & x_{38} \, \\
		\,  x_{41} & x_{42} & x_{43} & x_{44} & x_{45} & x_{46} & x_{47} & x_{48}\,
	\end{bmatrix}.
	$$ 
	The Pl\"ucker coordinates $p_{ijkl}$ of $P$ are the $70$ maximal minors of $X$.
	For a general subspace $P$ of dimension $4$ in $\CC^8$, consider all
	$2$-dimensional subspaces $Q \subset P $ such that
	$Q$ is in~$\mathcal{V}$. We claim that there are precisely
	two such subspaces $Q$.  In symbols,
	$\# \bigl( \mathcal{V}  \,\cap \, {\rm Gr}(2,P ) \bigr) = 2$. We are interested in $P$ such that this number is one.
	
	The argument is analogous to Example \ref{ex:transversal},
	but now for transversal lines to four lines. Let $x_j$ denote the $j$th column of $X$, viewed as a point in $\PP^3$. We consider $2 \times 4$ matrices
	$T$ such that the $2 \times 8$  matrix $TX$ satisfies
	$q_{12} = q_{34} = q_{56} = q_{78} = 0$. 
	The rows of $T$ span a line in $\PP^3$ which intersects the four lines 
	$ \overline{ x_1  x_2}$,
	$\overline{ x_3 x_4}$,
	$ \overline{x_5 x_6}$ and
	$ \overline{x_7 x_8}$ in $\PP^3$.
	Then $X$ is in the Hurwitz-Lam locus whenever only one such line $T$ exists. Now consider the $6 \times 4$ matrices
	$$ \begin{matrix} \tilde X & = & \big[\, x_1 \wedge x_2 \qquad x_3 \wedge x_4 \qquad
		x_5  \wedge  x_6  \qquad
		x_7 \wedge x_8 \,\bigr] \\
		(\tilde X)^* & = & \big[\, (x_1 \wedge x_2)^* \,\,\,\,\,(x_3 \wedge x_4)^* \,\,\,\,\,
		(x_5  \wedge  x_6)^*  \,\,\,\,\,
		(x_7 \wedge x_8)^* \,\bigr], 
	\end{matrix}
	$$
	 where we order the six rows of each matrix using the ordering  $[a_{12}:a_{13}:a_{23}:a_{14}:a_{24}:a_{23}]$ of Pl\"ucker coordinates of $x_i \wedge x_{i+1}$. The left kernel of $\tilde{X}$ is represented by a $2 \times 6$ matrix, and gives a line $L$ in $\PP^5.$ The lines $\overline{ x_1  x_2}, \ldots , \overline{x_7 x_8}$ have a single common transversal if and only if $L$ is tangent to ${\rm Gr}(2,4)$ in its Pl\"ucker embedding; in other words, if $L$ is in the Hurwitz locus of ${\rm Gr}(2,4).$ We compute the Hurwitz form of ${\rm Gr}(2,4)$ to be
	 \begin{equation}\label{eq:jac}
	 	\sum_{1 \leq i < j \leq 6} (-1)^{\sigma(i,j)} L_{i,j}L_{7-j,7-i} ,
	 \end{equation}
	 where $L_{i,j}$ are the dual Pl\"ucker coordinates of $L$ and $\sigma(i,j)$ is the number of occurences of $2$ or $5$ in $(i,j).$ Note that $L_{i,j}$ are actually the primal
	 Pl\"ucker coordinates $p_{i,j}(\tilde{X}^T)$ for the
	 row span of the matrix $\tilde{X}^T$ in ${\rm Gr}(4,6).$ By the Cauchy-Binet formula, we see
	 $$\det( (\tilde{X})^T  \cdot (\tilde{X})^*) \ =  \ \sum_{1 \leq i< j\leq 6} p_{i,j}(\tilde{X}^T)p_{i,j}((\tilde{X}^*)^T),$$
	 which is equal to the expression in (\ref{eq:jac}).  Finally, by Laplace expansion, the $4 \times 4$ matrix $(\tilde{X})^T \cdot (\tilde{X})^*$ is precisely the matrix on the right side of (\ref{eq:G28formula}). 
\end{proof}

\begin{remark}
Proposition \ref{prop:G28} extends to all positroid varieties
$\mathcal{V}_\beta$ where $\beta = (2,2,\ldots,2)$ is a partition of
$n=2t$ with $t = 2s-4$ even. The Hurwitz-Lam form
${\rm H}_{\mathcal{V}_\beta}$ is obtained from the Hurwitz form of ${\rm Gr}(2,s)$
by the  same substitution as that for Chow forms in Theorem~\ref{thm:catalan}.
Extending to the setting of
Remark \ref{rmk:Mnkt}, we can consider the Hurwitz form
of any Grassmannian ${\rm Gr}(k,s)$. This becomes the
Hurwitz-Lam form for a postroid whose
ground set is the disjoint union of nonbases.
\end{remark}

We now shift gears and briefly return to the setting
of Section \ref{sec:two}. For any subvariety $\mathcal{V}$ of $\PP^{n-1}$ of
dimension $d$
 and any integer $p \leq d+1$,
one associates a subvariety in a Grassmannian:
\begin{equation}
\label{eq:CpV}
 \mathcal{C}_p(\mathcal{V}) 
\,\, = \,\, \bigl\{ \, L \in {\rm Gr}(n-p,n) \,:\,
\hbox{the intersection $\, \mathcal{V}\,\cap \, L\,$ is not transverse} \,\bigr\}.
\end{equation}
For $p=d+1$ this is the Chow locus,
for $p=d$ it is the Hurwitz locus, and for $p=1$ it is the discriminant.
These varieties $\mathcal{C}_p(\mathcal{V})$ are usually hypersurfaces, and they are known~as {\em coisotropic hypersurfaces} or {\em higher Chow forms} of $\mathcal{V}$.
They were studied by Gel'fand, Kapranov and Zelevinsky 
in \cite[Chapter 4]{GKZ} and the theory was further developed by
Kohn in~\cite{kohn}. 

It is natural to generalize the definition of higher Chow forms 
to the setting of Grassmannians. Let $\mathcal{V}$ be a subvariety of
${\rm Gr}(k,n)$. For any sufficiently small integer $p$, we consider
\begin{equation}
\label{eq:CLpV}
 \mathcal{CL}_\mathcal{V}^{(p)} 
\,\, = \,\, \bigl\{ \, L \in {\rm Gr}(n-p,n) \,:\,
\hbox{the intersection $\, \mathcal{V} \cap {\rm Gr}(k,L) \,$ is not transverse} \,\bigr\}.
\end{equation}
These varieties are called {\em higher Chow-Lam loci}.
Of particular interest are the cases when $\mathcal{CL}_\mathcal{V}^{(p)}$
is a hypersurface, defined by a single equation ${\rm CL}_\mathcal{V}^{(p)}$.
We call these the {\em higher Chow-Lam forms} of $\mathcal{V}$.
The case when $p = r-k$ and ${\rm dim}(\mathcal{V}) = kp+1$
recovers the Chow-Lam form; see (\ref{eq:CLdef2}).
The Hurwitz-Lam form arises for $p = r-k$ and ${\rm dim}(\mathcal{V}) = kp$, 
see~(\ref{eq:HLV}).

\begin{example}[$p=2$]
Let $\mathcal{V}_\beta  = V(q_{12},q_{34},q_{56}) \subset {\rm Gr}(2,6)$
be the codimension $3$ positroid variety given by $\beta = (2,2,2)$.
Its Chow-Lam form was derived in Example \ref{ex:positroid265}.
We now compute the higher Chow-Lam form
${\rm CL}_{\mathcal{V}_\beta}^{(2)}$.
The locus $\mathcal{CL}_{\mathcal{V}_\beta}^{(2)} $ consists of all
subspaces $L \in {\rm Gr}(4,6)$ such that the curve
$\mathcal{V}_\beta \cap {\rm Gr}(2,L)$ is singular.
This corresponds to cutting the Grassmannian
${\rm Gr}(2,4)$  with three hyperplanes in $\PP^5$
such that the intersection is a singular curve.
The corresponding equation ${\rm C}_{{\rm Gr}(2,4)}^{(3)}  $ 
  is irreducible of degree $6$  in primal Stiefel coordinates.
If we replace these by the columns of the $6 \times 3$ matrix
  $\tilde X = \bigl[ x_1 \!\wedge\! x_2 \,\,\, x_3 \!\wedge \! x_4 \,\,\, x_5 \!\wedge \! x_6 \bigr]$,
  then we obtain a reducible polynomial of degree $12$
  in the entries of the $ 4 \times 6$ matrix $X = [x_1,x_2,\ldots,x_6]$:
  $$ {\rm CL}_{\mathcal{V}_\beta}^{(2)}\,\, = \,\,
 {\rm C}_{{\rm Gr}(2,4)}^{(3)}                             \,\, = \,\,
 p_{12}\,p_{34}\,p_{56} \,\, = \,\,
 {\rm det}(x_3,x_4,x_5,x_6) \,
 {\rm det}(x_1,x_2,x_5,x_6) \,
 {\rm det}(x_1,x_2,x_3,x_4) .
$$
This is similar to the identity  in (\ref{eq:CGr24}).
   The factorization has the
   following geometric meaning: 
   Consider the curve of lines that 
intersect three given lines in $\PP^3$.
This curve is smooth if and only if the three lines are
pairwise disjoint.
Thus $\,\mathcal{CL}_{\mathcal{V}_\beta}^{(2)}\, $ has three irreducible components.
\end{example}

                \bigskip
		\bigskip

		\footnotesize
                \noindent {\bf Authors' addresses:}

                \smallskip

                \noindent Elizabeth Pratt, UC Berkeley
                \hfill \url{epratt@berkeley.edu}

                \noindent  Bernd Sturmfels, MPI-MiS Leipzig
                \hfill \url{bernd@mis.mpg.de}

\end{document}